\documentclass[10pt]{scrarticle}

\usepackage[utf8]{inputenc}
\usepackage[a4paper]{geometry}
\usepackage[colorlinks=true,linkcolor=teal,citecolor=teal,hypertexnames=false]{hyperref}
\usepackage{amsmath,amssymb,amsthm}
\usepackage{enumitem}
\usepackage{comment}
\usepackage{microtype}
\usepackage[english]{babel}
\usepackage{mathrsfs}
\usepackage{mathtools} 
\usepackage{stmaryrd}
\usepackage{soul} 

\usepackage{tikz}
\usetikzlibrary{matrix,arrows}
\usepackage{tikz-cd}
\usetikzlibrary{cd}
\usepackage{comment}

\newcounter{parnum}[section]

\usepackage{calrsfs}
\DeclareMathAlphabet{\pazocal}{OMS}{zplm}{m}{n}

\mathcode`A="7041 \mathcode`B="7042 \mathcode`C="7043 \mathcode`D="7044
\mathcode`E="7045 \mathcode`F="7046 \mathcode`G="7047 \mathcode`H="7048
\mathcode`I="7049 \mathcode`J="704A \mathcode`K="704B \mathcode`L="704C
\mathcode`M="704D \mathcode`N="704E \mathcode`O="704F \mathcode`P="7050
\mathcode`Q="7051 \mathcode`R="7052 \mathcode`S="7053 \mathcode`T="7054
\mathcode`U="7055 \mathcode`V="7056 \mathcode`W="7057 \mathcode`X="7058
\mathcode`Y="7059 \mathcode`Z="705A

\newcommand\FF{\mathbb F}

\newcommand\CI{\overline{K}_{\infty}}
\newcommand\RR{\mathbb R}

\newcommand\p{\mathfrak p}
\newcommand\Fp{\FF_{\p}}

\DeclareMathOperator{\coker}{coker}

\newcommand\End{\operatorname{End}}
\newcommand\id{\operatorname{id}}

\newcommand\Lie{\mathrm{Lie}}
\newcommand\Hom{\mathrm{Hom}}
\newcommand\Mod{\mathbf{Mod}}

\newcommand\Fitt{\mathrm{Fitt}}

\newcommand\llangle{\langle\!\langle}
\newcommand\rrangle{\rangle\!\rangle}

\newtheorem{thm}[parnum]{Theorem}
\newtheorem*{thm*}{Theorem}
\newtheorem{lem}[parnum]{Lemma}

\newtheorem{conj}[parnum]{Conjecture}
\newtheorem{prop}[parnum]{Proposition}
\newtheorem{cor}[parnum]{Corollary}
\theoremstyle{definition}
\newtheorem{deftn}[parnum]{Definition}
\newtheorem{cons}[parnum]{Construction}
\newtheorem*{deftn*}{Definition}
\newtheorem{rem}[parnum]{Remark}
\newtheorem*{rem*}{Remark}

\let\liminf\relax
\DeclareMathOperator*{\liminf}{liminf}

\title{A class formula for\\effective Anderson motives}

\author{%
Xavier Caruso\footnote{CNRS, Université de Bordeaux, IMB, \emph{Email:} \texttt{xavier@caruso.ovh}},
Quentin Gazda\footnote{Sorbonne Universit\'e and Universit\'e Paris Cit\'e, CNRS, IMJ-PRG, \emph{Email:} \texttt{quentin@gazda.fr}},
Alexis Lucas\footnote{Université de Caen, LMNO, \emph{Email:} \texttt{alexis.lucas@unicaen.fr}}}

\begin{document}

\maketitle

\setcounter{tocdepth}{1}

\begin{abstract}
We establish a class formula with variable for arbitrary effective 
Anderson motives. This pursues the pioneering work of Taelman and 
extends the scope of the work of Anglès--Ngo Dac--Tavares Ribeiro, 
removing the admissibility assumption and allowing for more general
Anderson modules. 
In the same setting, we further conjecture a ``nondeterminantal''
version of the class formula which holds before taking determinants
and prove a weak version of it.
Finally, we prove an overconvergent version of the class formula and
derive from it strong convergence properties for the associated 
$L$-series.
\medskip

\noindent
\textsc{MSC~2020.} Primary 11G09; Secondary 11M38, 11R58.
\end{abstract}

\tableofcontents

\bigskip

In a seminal article~\cite{taelman}, 
Taelman proves a class formula expressing the value at $1$ of the
$L$-series of a Drinfeld module $\phi$ as the product of the Fitting
ideal of the class module of $\phi$ and its regulator, defined by
means of the module of so-called Taelman units.
This formula was then extended by Anglès, Ngo Dac and Tavares Ribeiro
in~\cite{ANT} in two directions. First, their formula is more general
in the sense that it not only provides the value at $1$ but the
complete $L$-series. Second, the setting is more general: instead
of Drinfeld modules, the authors of~\cite{ANT} work with Anderson
modules and manage to prove a class formula for almost all of them,
the so-called \emph{admissible} ones.

In this short article, we revisit the work of~\cite{ANT} and
extend it further in several directions.
Let $\FF$ be a finite field, $C$ be a smooth projective curve over $\FF$ and $\infty$ be a closed point on $C$. 
We denote by $A$ the ring of algebraic functions on $C$ that are regular away from $\infty$:
\[
A=\mathcal{O}_C(C\setminus \{\infty\})
\]
and we denote by $K$ its fraction field or, equivalently, the function field of $C$. Let $L$ be a finite extension of $K$ and let $R$ be the integral closure of $A$ in $L$.
In this article, we work in the setting of effective Anderson $A$-motives over $R$
(see Definition~\ref{def:motive}); this encompasses the Anderson modules that are Zariski locally isomorphic to $\mathbb{G}_a^d$ (and not merely isomorphic to $\mathbb{G}_a^d$, as considered in~\cite{ANT}).

We first extend to our framework all the constructions which are needed to state 
the class formula: we define \emph{the $L$-series} $L(M; \tau)$ and \emph{the module of 
$\tau$-units} $U_M \subset \Lie_M(L_\infty\langle\tau\rangle)$ attached to an 
effective Anderson motive $M$.
In what precedes, $L_\infty\langle\tau\rangle$ is the topological module of Tate series in
the variable $\tau$ with coefficients in the ring $L_\infty$, defined as the
completion of $L$ at the point $\infty$.
The attentive reader certainly observed that we used the greek letter $\tau$
for the variable, while the classical notation is $z$.
The reason why we prefer $\tau$ is because this variable is closely related 
to the Frobenius acting on $L$ or $L_\infty$, as it will become apparent in
this article.

With this preparation and writing $K_\infty$ for the completion of $K$ at
$\infty$, our main theorem is the following.

\begin{thm*}
Let $M$ be an effective Anderson $A$-motive over $R$.
Then $U_M$ is a full-rank projective $A[\tau]$-lattice in 
$\Lie_M(L_{\infty}\langle\tau\rangle)$ and we have an equality
on $A[\tau]$-lines
\[
\mathrm{det}_{A[\tau]} \big(U_M\big) = L(M;\tau) \cdot \mathrm{det}_{A[\tau]}\big(\Lie_M(R[\tau])\big)
\]
in $\mathrm{det}_{K_{\infty}\langle\tau\rangle} \big(\Lie_M(L_{\infty}\langle\tau\rangle)\big)$.
\end{thm*}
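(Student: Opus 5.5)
I will follow the Taelman / Anglès--Ngo Dac--Tavares Ribeiro strategy, adapted to motives: interpret both sides as Fitting-type determinants of finite objects, and then compute these locally at each maximal ideal of $A$ via an Euler product.

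\textbf{Step 1: the exact sequence.} Set $E(B) := \Lie_M(B)$ with its twisted $A[\tau]$-action coming from $M$; concretely, $E(B)$ is the cokernel of $\tau_M - 1$ on $M\otimes B$ (or the analogous $\Ext^1$-type description). I will construct an exponential map
\[
\exp_M : \Lie_M(L_\infty\langle\tau\rangle) \to E(L_\infty\langle\tau\rangle)
\]
and define $U_M$ as the preimage of $E(R[\tau])$. The cokernel $H_M$ of $E(R[\tau])\to E(L_\infty\langle\tau\rangle)/\exp_M(\cdot)$ is the class module. I will then show two things. First, $\exp_M$ is a local isomorphism near $0$; this follows from effectivity and from a contraction argument in the Tate algebra. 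Second, $E(L_\infty\langle\tau\rangle)/E(R[\tau])$ is compact. Together these give that $U_M$ is discrete and cocompact, hence a full-rank lattice, and that $H_M$ is a finitely generated $\FF[\tau]$-torsion module. Projectivity over $A[\tau]$ follows because $U_M$ is $\tau$-torsion free and finitely generated; I will pass to a faithfully flat cover to reduce to the free case when $M$ is only locally free.

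\textbf{Step 2: the trace formula.} Following Taelman's ``ratio of covolumes'' formalism, I will show that the ratio $[\Lie_M(R[\tau]) : U_M]$ (a well-defined element of $K_\infty\langle\tau\rangle^\times$ up to $A[\tau]^\times$, via determinants) equals $\det$ of $1$ minus a nuclear operator. By the Anderson trace formula for nuclear operators on $R[\tau]$-modules, this determinant becomes an Euler product $\prod_{\mathfrak m} \Fitt(\Lie_M(R/\mathfrak m[\tau]))/\Fitt(E(R/\mathfrak m[\tau]))$. Each local factor is identified with the Euler factor of $L(M;\tau)$ by computing the characteristic polynomial of Frobenius on the reduction $M/\mathfrak m$. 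This amounts to a finite-field Lefschetz/Anderson trace formula for the finite $\FF[\tau]$-modules $\coker(\tau_M-1)$.

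\textbf{Main obstacle.} I expect the hardest step to be making the trace-formula / nuclear-determinant argument work without admissibility and without global freeness. The infinite product must converge in $K_\infty\langle\tau\rangle$, and the class module must have an $\FF[\tau]$-Fitting ideal compatible with the global determinant. I would first try to choose a Zariski cover of $\mathrm{Spec}\,R$ trivializing $M$, then prove the equality of ratios of covolumes locally and glue. Uniform convergence I would get from the estimate $|\tau_M|<1$ on a suitable lattice, which uses effectivity.
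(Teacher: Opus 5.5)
There are genuine gaps in both steps. In Step~1, projectivity does \emph{not} follow from $U_M$ being finitely generated and $\tau$-torsion free. The ring $A[\tau]$ has Krull dimension $2$, and for nonconstant $a\in A$ the ideal $(a,\tau)$ is finitely generated and torsion free but has height $2$. Hence it is not locally principal, so not projective. Passing to a cover of $\mathrm{Spec}\,R$ trivializing $M$ does not touch this problem, which concerns the $A[\tau]$-structure of $U_M$, not the $R[\tau]$-structure of $M$. Nonfreeness of $M$ is harmless anyway, since $E_M=\Hom_{R[\tau]}(M,-)$ is exact. The missing input is a saturation property: $\Lie_M(L_\infty\langle\tau\rangle)/U_M$ embeds into $E_M(L_\infty/R\langle\tau\rangle)$, so it has no $\FF[\tau]$-torsion. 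Combined with $A[\tau]$-flatness of $\Lie_M(L_\infty\langle\tau\rangle)$, a $\mathrm{Tor}$ computation gives flatness of $U_M$ (Proposition~\ref{prop:flatness}); it uses that $A[\tau]$ has global dimension $2$ and that its localization at $\FF[\tau]\setminus\{0\}$ is Dedekind. Also, $E_M(L_\infty\langle\tau\rangle)/E_M(R[\tau])$ is bounded but not compact, so ``discrete and cocompact'' is unavailable. Finite generation over $\FF[\tau]$ comes instead from ``discrete and bounded'', using that balls are $\FF[\tau]$-stable with $B_{\rho'}\langle\tau\rangle/B_{\rho}\langle\tau\rangle\cong(B_{\rho'}/B_{\rho})[\tau]$.

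Step~2 is where the theorem lives, and as written it names a strategy rather than giving an argument. Even the ``ratio'' $[\Lie_M(R[\tau]):U_M]$ is not a priori an element of $K_\infty\langle\tau\rangle^\times$: you first need $\det U_M\cong\det\Lie_M(R[\tau])$. The paper gets this from two facts:
\begin{itemize}
\item $U_M/U_M\tau\cong\Lie_M(R)$, which uses that $H_M$ is finite with $\tau$ acting bijectively;
\item Quillen--Suslin, which says projective $A[\tau]$-modules are extended from $A$.
\end{itemize}
Likewise, your local factors $\Fitt_{A[\tau]}(\cdot)$ are ideals that need not be principal, which is why Euler factors are defined over $K[\tau]$. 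The identification of the ratio with a nuclear Fredholm determinant is the entire content of the formula and is not argued. Your plan to handle nonfree $M$ on a cover $\mathrm{Spec}\,R[f^{-1}]$ and then glue cannot work. $U_M$ is cut out by the global condition $\exp_M(x)\in E_M(R[\tau])$, and $R[f^{-1}]$ is dense in $L_\infty$, so there is no local lattice of units to compare.

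The paper needs no trace formula. It proceeds as follows.
\begin{itemize}
\item Choose $\alpha^{(J)}:\Lie_M(J[\tau])\xrightarrow{\sim}U_M^{(J)}$ with $\alpha^{(J)}\equiv\id\pmod\tau$, so that $\lambda^{(J)}=\det\alpha^{(J)}\equiv1\pmod\tau$.
\item The exact sequence $0\to K\otimes_AU_M^{(J)}\to K\otimes_AU_M\to K\otimes_AE_M(R/J[\tau])\to0$, the Chinese remainder theorem, and $K\otimes_AE_M(R/\p^n[\tau])\cong K\otimes_AE_M(R/\p[\tau])$ give $\lambda^{(J)}/\lambda=L^{(J)}(M;\tau)/L(M;\tau)$.
\item Suppose $J$ lies in the ideal generated by a common denominator of the first $n$ coefficients of $\exp_M$ and $\log_M$, and in all primes of degree at most $n$. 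Then $U_M^{(J)}\subset\Lie_M(J[\tau])+\Lie_M(L_\infty\langle\tau\rangle)\tau^n$ and $L^{(J)}(M;\tau)\equiv1\pmod{\tau^n}$.
\item Hence $\mu:=\lambda\,L(M;\tau)^{-1}\in A[\tau]+K_\infty\langle\tau\rangle\tau^n$ for every $n$, so $\mu\in A[\tau]$. Being a unit of $K_\infty\langle\tau\rangle$ congruent to $1$ modulo $\tau$, it equals $1$.
\end{itemize}
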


Compared to~\cite{ANT}, the most important advantages of our
theorem are the following.
First, the setting is more general as already mentionned above,
and we no longer need any admissibility assumption.
Second, our theorem is more precise: it provides an equality
of $A[\tau]$-lines whereas the main theorem of \cite{ANT} only
compares the lines they generate over 
$A(\tau) := \FF(\tau) \otimes_{\FF[\tau]} A[\tau]$.
While it is more or less possible to move from the latter to the
former by introducing sign functions, our result does not need this 
extra step and, for this reason, it is easier to handle and more
robust.

A third advantage of our approach is related to our method of
proof which is, we think, more intrinsic, less technical and so
possibly easier to generalize to other settings, \emph{e.g.}
noneffective Anderson motives.
While it reuses several important ingredients introduced and/or
developed in~\cite{taelman} and~\cite{ANT}, it also introduces
new ideas and arguments.
In particular, we do not use the notion of $z$-deformation but
reinterpret it at the level of the comotive.
This makes all constructions more functorial and easier to handle
in many cases.
Moreover, a significant part of our proof consists in closely
studying the algebraic structure of $U_M$; this eventually relies
on standard but nontrivial theorems of commutative algebra over 
polynomial rings, including the Quillen--Suslin theorem.
In addition of being interesting on its own, this preliminary
part allows for a drastic simplification and an extension of the
scope of the final argument.

Also, when working directly over $A[\tau]$ looks untractable,
we always prefer extending scalars to $K[\tau]$ instead of $A(\tau)$
as it was systematically done in~\cite{ANT}. This has several
practical advantages (in particular it is an important idea to
relax the admissibility assumption) but also, and above all,
it has a strong arithmetical content.
Indeed inverting the elements of $A$ somehow corresponds to
working up to isogeny, while inverting polynomials in 
$\FF[\tau]$ does not have a clear interpretation (at least
to us).
After this, we believe that studying Anderson motives up to 
isogeny could be quite interesting on its own,
and we plan to come back to it in a subsequent article.

Last but not least, our approach suggests further possible
improvements. In this perspective, we establish a direct relationship 
between $\Lie_M(R[\tau])$ and $U_M$, which holds before taking
determinants (see Theorem~\ref{thm:nondet:classformula}).
We expect that such a formula could be useful, for example, 
to study equivariant versions of the class formula as in~\cite{CFM}.

Finally, in Section~\ref{sec:convergence}, we prove an
overconvergence theorem of Taelman units and show
that it implies extremely rapid convergence properties (that
we call \emph{superconvergence}) for the $L$\nobreakdash-series itself.
We conjecture an even stronger version of this result and
relate it to Taelman's conjecture~\cite{taelman-conj}.

\paragraph{Acknowledgments.}

The authors are deeply grateful to the organizers of the 
thematic programme ``Arithmetic of global function fields''
(Madrid, 2026) for inviting us to give a series of lectures
on the topic of the present article. They thank in particular
Bruno Anglès for his constant interest in this project and all 
his relevant comments. They also want to thank Andreas Maurischat 
for pointing out a simplification in the final argument of the
proof of Theorem~\ref{thm:classformula}.

The first author was supported by the ANR projects PadLefan
(ANR-22-CE40-0013) and PPAL (ANR-25-CE40-4664),
the second by the ANR project Al$K$traZ (ANR-25-CE40-3307-01).

\section{Anderson motives and their functor of points}
\label{sec:motives}

Let $\FF$ be a finite field with cardinality $q$ and characteristic $p$.
Let $C$ be a smooth projective curve over $\FF$ and fix $\infty$ as a 
closed point on $C$. We let $A$ denote the ring of functions on $C$ that 
are regular away from the point $\infty$:
\[
A=\mathcal{O}_C(C\setminus\{\infty\}).
\]
We denote by $K$ the function field of $C$ (equivalently, the fraction 
field of $A$). Let $L$ be a finite extension of $K$ and $R$ be its 
ring of integers, \emph{i.e.} the subring of elements of $L$ which are
integral over $A$.

We let $K_\infty$ be the completion of $K$ at $\infty$ and set 
$L_\infty := L \otimes_K K_\infty$. We underline that $L_\infty$ is 
not necessarily a field, but it is a finite product of finite 
extensions of $K_\infty$. Both $K_\infty$ and $L_\infty$ are
endowed with the $\infty$-adic norm, which we denote by
$\vert \cdot \vert$.

We form the Ore polynomial ring $R[\tau]$ whose elements are
polynomials in $\tau$ over $R$ with multiplication driven by the
rule $\tau r = r^q \tau$ for $r \in R$.
We also consider the tensor product $A \otimes R[\tau]$ (where
throughout the article, unlabeled tensor products are always taken
over $\FF$). We stress that $\tau$ commutes with elements in $A$.
In particular, $A[\tau]$ is a commutative subring over $A \otimes
R[\tau]$.

The relevant definition in our study is the following.
\begin{deftn}[Anderson motives]
\label{def:motive}
An \emph{Anderson $A$-motive over $R$} is a left module $M$ over 
$A \otimes R[\tau]$ such that:
\begin{itemize}
\item $M$ is finitely generated projective over $R[\tau]$,
\item for all $a \in A$, the multiplication by
$a \otimes 1 - 1 \otimes a$ is nilpotent on $\Lie(M) := M/R[\tau]\tau M$.
\end{itemize}
\end{deftn}

\begin{rem}[Anderson modules]\label{rem:anderson-modules}
An Anderson module over $R$ of dimension $d$ is a smooth and affine $A$-module scheme $E$ over $R$ whose underlying $\FF$-module scheme is a flat form of $\mathbb{G}_a^d$ and having the property that $a\otimes 1-1\otimes a$ acts nilpotently on $\Lie_E(R)$ for all $a\in A$ (see \cite[Definition~1.2]{hartl}). If $E$ splits as a form of $\mathbb{G}_a^d$ in the Zariski topology (and not merely the flat topology), then this matches Fang's definition \cite[Remark~1.4]{fang}.

There is a well-known equivalence between Anderson motives in the sense of Definition~\ref{def:motive} which are free of rank $d$ over $R[\tau]$ and Anderson modules of dimension $d$ which are split over $R$ (usually given in term of an $\FF$-algebra homomorphism $\phi:A \to M_d(R)[\tau]$). Definition \ref{def:motive} is more general since the projectivity assumption is weaker than freeness. In particular, it encompasses Anderson modules \emph{à la Fang} which are Zariski forms of $\mathbb{G}_a^d$; this follows from \cite[Remark~3.6]{gazda-maurischat}.

It does not \emph{a priori} encompass Hartl's more general definition \cite[Definition~1.2]{hartl}. Over a finite Dedekind extension $R$ of $A$, however, we are not aware of an Anderson module of dimension $d$ whose underlying $\FF$-vector space scheme is not a Zariski form of $\mathbb{G}_a^d$. 
\end{rem}

\subsection{The functor of points}
In the case where an Anderson motive $M$ arises from an Anderson module $E$ (Remark~\ref{rem:anderson-modules}), the latter can be recovered from the former by a canonical isomorphism (\emph{e.g.} \cite[Proposition~3.2]{gazda-maurischat})
\begin{equation}\label{eq:relation-motive-module}
E(S) \cong \Hom_{R[\tau]}(M,S).
\end{equation}
Above, $S$ is an $R$-algebra and homomorphisms are those of left $R[\tau]$-modules, treating $S$ as a left $R[\tau]$-module by the rule $\tau x = x^q$ for $x \in S$. 

We extend this notation to general Anderson motives and general left $R[\tau]$-modules.
\begin{deftn}[Functor of points]
To an Anderson motive $M$, we attach its \emph{functor of points}
$E_M$ defined by
$$\begin{array}{rcl}
E_M : \quad \Mod_{R[\tau]} & \longrightarrow & \Mod_A \smallskip \\
X & \longmapsto & \Hom_{R[\tau]}(M, X),
\end{array}$$
where the structure of $A$-module on $\Hom_{R[\tau]}(M, X)$ comes
from the $A$-action of $M$, \emph{i.e.} for $a \in A$ and $f : M
\to X$, the map $af$ is $x \mapsto f(ax)$.

Moreover, if $X$ is a bimodule over a pair of $\FF$-algebras $(R[\tau], B)$, we endow $\Hom_{R[\tau]}(M, X)$ with the inherited structure of right
$B$-module. If the action of $\FF\subset R[\tau]$ agrees with that of $\FF\subset B$, as it usually does, $E_M(X)$ then becomes a right module over $A \otimes B$.
\end{deftn}

When $M$ comes from an Anderson module $E$, the functor of points recovers several classical constructions.
\begin{itemize}
\item If $X = S$ is an $R$-algebra, we have canonically $E_M(S)\cong E(S)$ as $A$-modules by Equation~\eqref{eq:relation-motive-module}.
\item If $X = S$ is a $R$-algebra, endowed with the structure
of $R[\tau]$-module given by $\tau x = 0$ for $x \in S$, then
$E_M(S)$ coincides with $\Lie_E(S)$ by \cite[Lemma~1.3.4]{anderson}. 
\item If $X = R[\tau]$, then $E_M(R[\tau])$ inherits a structure of
right module over $A \otimes R[\tau]$; it is isomorphic the so-called
\emph{dual motive} or \emph{comotive} of $E$~\cite[Proposition~3.2]{gazda-maurischat}.
\end{itemize}

Following the second bullet point, we introduce another notation.
\begin{deftn}[Lie functor]
The \emph{Lie functor} of an Anderson motive $M$ is
$$\begin{array}{rcl}
\Lie_M : \quad \Mod_{R} & \longrightarrow & \Mod_A \smallskip \\
X & \longmapsto & \Hom_{R}(\Lie(M), X)
\end{array}$$
where we recall that $\Lie(M) = M/R[\tau]\tau M$.
\end{deftn}

We record the two following immediate properties, which will be
used repeatedly throughout this article.

\begin{lem}
Let $X$ be a $R[\tau]$-module on which $\tau$ acts by zero.
Then $E_M(X) = \Lie_M(X)$.
\end{lem}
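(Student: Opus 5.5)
The plan is to show that an $R[\tau]$-linear map $f : M \to X$ is the same thing as an $R$-linear map $\Lie(M) \to X$, by a direct universal-property argument.

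First, take $f \in E_M(X) = \Hom_{R[\tau]}(M, X)$. For any $m \in M$ we have $f(\tau m) = \tau f(m) = 0$, because $\tau$ acts by zero on $X$. Hence $f$ vanishes on the left submodule $R[\tau]\tau M$: for $r \in R[\tau]$ we get $f(r\tau m) = r f(\tau m) = 0$. So $f$ factors uniquely through the quotient $\Lie(M) = M/R[\tau]\tau M$. The induced map $\bar f : \Lie(M) \to X$ is $R$-linear, since $R \subset R[\tau]$.

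Conversely, take an $R$-linear map $g : \Lie(M) \to X$ and let $f = g \circ \pi$, where $\pi : M \to \Lie(M)$ is the projection. This $f$ is $R$-linear. It also commutes with $\tau$: indeed $f(\tau m) = g(\pi(\tau m)) = 0 = \tau f(m)$. Since $R[\tau]$ is generated as a ring by $R$ and $\tau$, $f$ is $R[\tau]$-linear. The two constructions are inverse to each other, so we obtain a bijection
\[
\Hom_{R[\tau]}(M, X) \cong \Hom_R(\Lie(M), X),
\]
where on the right $X$ is regarded as an $R$-module.

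It remains to check $A$-linearity. The $A$-action on $M$ commutes with $R[\tau]$, so $R[\tau]\tau M$ is stable under $A$ and $\Lie(M)$ inherits an $A$-action. The $A$-structure on both Hom sets is defined by precomposition with the action of $a \in A$ on the source. This precomposition is compatible with $\pi$, so the bijection is $A$-linear. The same argument applies verbatim to any extra right $B$-module structure on $X$.

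No step presents a genuine obstacle. The only point needing care is that the statement identifies $\Lie_M(X)$ with $E_M(X)$ when $X$ is viewed simply as an $R$-module, and that the submodule generated by $\tau M$ is stable under both $R[\tau]$ and $A$.
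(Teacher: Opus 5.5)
Your proof is correct. The paper gives no proof of this lemma (it is listed among properties "recorded" as immediate), and factoring $R[\tau]$-linear maps $M \to X$ through $\Lie(M) = M/R[\tau]\tau M$, together with checking compatibility with the $A$-action, is exactly the intended verification.
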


\begin{prop}
The functors $E_M$ and $\Lie_M$ are exact.
\end{prop}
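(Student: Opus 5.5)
The plan is to reduce exactness of both functors to the projectivity of the relevant modules, using that a $\Hom$ functor out of a projective module is exact.

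For $E_M$: by Definition~\ref{def:motive}, $M$ is finitely generated projective as a left $R[\tau]$-module. So $\Hom_{R[\tau]}(M,-)$ is exact on left $R[\tau]$-modules. It only remains to check that the induced maps are $A$-linear (and right $B$-linear when $X$ is a bimodule). This holds because the $A$-action on $E_M(X)$ is precomposition by the action of $a$ on $M$, which commutes with postcomposition by any $R[\tau]$-linear map $X\to X'$. Since exactness of a sequence of $A$-modules depends only on the underlying abelian groups, $E_M$ is exact.

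For $\Lie_M$, the main point is that $\Lie(M)=M/R[\tau]\tau M$ is a projective $R$-module. I would argue as follows. $M$ is a direct summand of a free module $R[\tau]^n$, say $M\oplus N\cong R[\tau]^n$. The functor $P\mapsto P/R[\tau]\tau P = R\otimes_{R[\tau]}P$, where $R=R[\tau]/R[\tau]\tau$ is viewed as a right $R[\tau]$-module, is additive. Hence $\Lie(M)\oplus \Lie(N)\cong (R[\tau]/R[\tau]\tau)^n$. As a left $R$-module, $R[\tau]/R[\tau]\tau$ is $R$ itself: $R[\tau]\tau$ is the set of polynomials without constant term, a two-sided ideal, so the quotient is canonically $R$. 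Therefore $\Lie(M)$ is a direct summand of $R^n$, hence finitely generated projective over $R$. It follows that $\Hom_R(\Lie(M),-)$ is exact on $R$-modules.

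The $A$-action on $\Lie_M(X)$ again comes from precomposition with the $A$-action on $\Lie(M)$, which is well defined because $A$ commutes with $\tau$, so $R[\tau]\tau M$ is $A$-stable. As before, this action commutes with postcomposition. I expect no real obstacle. The only step requiring a little care is the identification $\Lie(M)\cong R\otimes_{R[\tau]}M$, which is needed to see additivity and hence projectivity of $\Lie(M)$.
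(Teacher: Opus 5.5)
Your argument is correct. It is the natural justification for what the paper records as an ``immediate'' property without writing a proof: $E_M$ is exact because $M$ is projective over $R[\tau]$, and $\Lie_M$ is exact because $\Lie(M)\cong R\otimes_{R[\tau]}M$ is projective over $R$.

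The second half can be shortened using the lemma stated just before the proposition. By adjunction, $\Lie_M(X)=\Hom_R(R\otimes_{R[\tau]}M,X)=\Hom_{R[\tau]}(M,X)=E_M(X)$, where $\tau$ acts by zero on $X$. So $\Lie_M$ is $E_M$ composed with the exact restriction of scalars along $R[\tau]\to R$, $\tau\mapsto 0$, and the projectivity of $\Lie(M)$ is then not needed.
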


\begin{rem}[Base change]
We underline that the functors $E_M$ and $\Lie_M$ are
compatible with base change in the following sense. If $S$ be
an algebra over $R$, the tensor product $S[\tau] \otimes_{R[\tau]}
M$ defines an Anderson $A$-motive $M_S$ over $S$ and, whenever
$X$ is a module over $S[\tau]$, we have by adjunction
\begin{equation}\label{eq:base-change}
E_{M_S}(X) = 
\Hom_{S[\tau]}\big(S[\tau] \otimes_{R[\tau]} M, X\big) =
\Hom_{R[\tau]}\big(M, X\big) = E_M(X)
\end{equation}
and, similarly, $\Lie_{M_S}(X) = \Lie_{M}(X)$ for all $S$-module $X$.
\end{rem}

\begin{rem}[Choice of coordinates]\label{rem:choice-of-basis}
Because $L$ is a field, $L[\tau]$ is a left principal ideal domain. As a consequence, any finite projective left $L[\tau]$-module is free. In particular, the base change $M_L:=L[\tau]\otimes_{R[\tau]}M$ is free over $L[\tau]$ and (by Remark \ref{rem:anderson-modules}) comes from an Anderson module $E$ over $L$ whose underlying $\FF$-vector space scheme is isomorphic $\mathbb{G}_a^d$, where $d$ is the rank of $M_L$. After choosing a basis of $M_L$ (called \emph{a choice of coordinates for $M_L$}), and whenever $X$ factors as an $L[\tau]$-module, we obtain an isomorphism
\[
E_M(X)=E(X)\cong X^d.
\] 
\end{rem}

\subsection{Analytic noncommutative series}
We consider the noncommutative ring $L_{\infty}[\![\tau]\!]$ of formal power series in $\tau$ with coefficients in $L_{\infty}$. Many series related to the arithmetic of Anderson modules (such as exponentials and logarithms) naturally belong to this ring, although they often satisfy strong convergence properties. As a preliminary of what follows, we take some time to develop the basics of the theory of analytic series in the variable $\tau$.

\begin{deftn}[Noncommutative convergent power series]
For a positive real number $r>0$, we set
$$L_\infty\langle\tau; r\rangle := \Big\{ \sum_i a_i \tau^i \,\big|\, a_i \in L_\infty,\,
\lim_{i \to \infty} \vert a_i \vert r^{q^i} = 0\Big\} \subset L_\infty[\![\tau]\!]$$
and endow it with the Gauss norm defined by:
$$\Big\Vert {\textstyle \sum_{i \geq 0} a_i \tau^i} \Big\Vert_r := 
\max_{i \geq 0} \vert a_i \vert r^{q^i} \qquad (a_i \in L_\infty).$$
\end{deftn}

From an analytic point of view and interpreting $\tau$ as the
Frobenius $x \mapsto x^q$, the topological module $L_\infty\langle\tau; r\rangle$ is the space of converging analytic $\FF$-linear functions on the
closed disc of radius $r$ and $\Vert \cdot \Vert_r$ is the sup
norm on this disc.
When $r = 1$, we omit it in the notation: we simply write
$L_\infty\langle\tau\rangle$ for $L_\infty\langle\tau; 1\rangle$
and $\Vert \cdot \Vert$ for $\Vert \cdot \Vert_1$.
We also define the space of entire $\FF$-linear functions by
$$L_\infty\langle\tau; \infty\rangle := 
\bigcap_{r > 0} L_\infty\langle\tau; r\rangle.$$

\begin{lem}
\label{lem:rdcproduct}
Let $r, s \in \RR_{> 0}$.
If $f \in L_\infty\langle\tau; r\rangle$ and
$g \in L_\infty\langle\tau; s\rangle$ with $\Vert g \Vert_s \leq r$,
then $fg \in L_\infty\langle\tau; s\rangle$.
\end{lem}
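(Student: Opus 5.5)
The plan is to expand the product explicitly and bound each coefficient. Write $f = \sum_i a_i \tau^i$ and $g = \sum_j b_j \tau^j$. Using the commutation rule $\tau x = x^q \tau$, we get
\[
fg = \sum_{n \geq 0} c_n \tau^n, \qquad c_n = \sum_{i+j=n} a_i\, b_j^{q^i}.
\]
There are two things to check. First, each $c_n$ is a finite sum, which holds because $i$ and $j$ range over $\{0,\dots,n\}$; so $fg$ is a well-defined element of $L_\infty[\![\tau]\!]$. Second, $\vert c_n\vert s^{q^n} \to 0$. By the ultrametric inequality it suffices to control each term:
\[
\vert a_i b_j^{q^i}\vert s^{q^{i+j}} = \vert a_i\vert \big(\vert b_j\vert s^{q^j}\big)^{q^i}.
\]
I will use multiplicativity of the norm on each factor of $L_\infty$. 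If needed, I replace the norm by the max over the factors, which is still submultiplicative and satisfies $\vert x^{q^i}\vert = \vert x\vert^{q^i}$ componentwise.

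The key step is to exploit the hypothesis $\vert b_j\vert s^{q^j} \leq \Vert g\Vert_s \leq r$. It gives $\vert a_i b_j^{q^i}\vert s^{q^n} \leq \vert a_i\vert r^{q^i}$. This bound alone does not tend to zero uniformly in $j$ when $i$ is small, so the terms have to be split.

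Fix $\varepsilon>0$ and choose $N$ such that two conditions hold:
\begin{itemize}
\item $\vert a_i\vert r^{q^i} < \varepsilon$ for $i > N$;
\item $\vert b_j\vert s^{q^j} < \eta$ for $j > N$.
\end{itemize}
Here $\eta \in (0,r]$ is chosen so that $\max_{i\le N} \vert a_i\vert\, \eta^{q^i} < \varepsilon$. This is possible because only finitely many $i \leq N$ are involved and $\eta^{q^i}\to 0$ as $\eta\to 0$.

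Now take $n > 2N$ and a pair $(i,j)$ with $i+j=n$. Either $i > N$, and then the term is bounded by $\vert a_i\vert r^{q^i}<\varepsilon$. Or $i \le N$, so $j > N$, and then the term is bounded by $\vert a_i\vert \eta^{q^i} < \varepsilon$. Hence $\vert c_n\vert s^{q^n} < \varepsilon$ for all $n > 2N$, which gives $fg \in L_\infty\langle\tau; s\rangle$. As a byproduct the argument also yields $\Vert fg\Vert_s \leq \Vert f\Vert_r$.

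The only delicate point is this uniformity issue: choosing $\eta$ after fixing $N$ from $f$'s decay, then enlarging $N$ for $g$. A secondary point is that $L_\infty$ need not be a field. I handle it by working componentwise on the product of fields, or equivalently by noting that the product norm satisfies $\vert xy\vert \le \vert x\vert\vert y\vert$ and $\vert x^{q^i}\vert = \vert x\vert^{q^i}$, which is all the argument uses.
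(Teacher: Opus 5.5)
Your proof is correct and follows essentially the same route as the paper's: expand $fg$, bound each term by $\vert a_i\vert r^{q^i}$ using $\Vert g\Vert_s\le r$, and split according to whether $i$ or $j$ is large. The only difference is cosmetic. The paper takes the threshold for the $b_j$ uniformly equal to $\varepsilon r/\Vert f\Vert_r$ and uses $(\varepsilon/\Vert f\Vert_r)^{q^i}\le \varepsilon/\Vert f\Vert_r$. Your $\eta$ instead depends on $a_0,\dots,a_N$; this still works, because $\eta\le r$ already gives $\vert a_i\vert\eta^{q^i}<\varepsilon$ for the indices added when you enlarge $N$.
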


\begin{proof}
We write $f = \sum_i a_i \tau^i$ and $g = \sum_j b_j \tau^j$.
Then $fg = \sum_{i,j} a_i b_j^{q^i} \tau^{i+j}$.
Let $\varepsilon$ be a real number in the range $(0, \Vert f \Vert_r)$.
By definition, there exist $i_0$ and $j_0$ such that
$$\vert a_i \vert r^{q^i} \leq \varepsilon
  \quad \text{and} \quad
  \vert b_j \vert s^{q^j} \leq \frac{\varepsilon r}{\Vert f \Vert_r}$$
whenever $i \geq i_0$, $j \geq j_0$.
Let us assume that $i + j \geq i_0 + j_0$.
If $i \geq i_0$, we can write
$$\vert a_i b_j^{q^i} \vert s^{q^{i+j}} = 
  \vert a_i \vert \cdot \big( \vert b_j \vert s^{q^j} \big)^{q^i} \leq
  \vert a_i \vert r^{q^i} \leq \varepsilon$$
the first inequality in the above line coming from the assumption
$\Vert g \Vert_s \leq r$. Otherwise, we necessarily have $j \geq j_0$
and we deduce
$$\vert a_i b_j^{q^i} \vert s^{q^{i+j}} \leq
  \vert a_i \vert \cdot \left(\frac{\varepsilon r}{\Vert f \Vert_r}\right)^{q^i} \leq
  \vert a_i \vert r^{q^i} \cdot \left(\frac{\varepsilon}{\Vert f \Vert_r}\right)^{q^i} \leq
  \Vert f \Vert_r \cdot \frac{\varepsilon}{\Vert f \Vert_r} = \varepsilon.$$
We conclude that $\vert a_i b_j^{q^i} \vert s^{q^{i+j}}$ goes to $0$
when $i{+}j$ goes to infinity, showing that $fg \in L_\infty\langle\tau; 
s\rangle$.
\end{proof}

\begin{rem}
Reasoning in terms of analytic functions, Lemma~\ref{lem:rdcproduct}
becomes crystal clear: since multiplication in $L_\infty[\![\tau]\!]$ 
corresponds to composition of functions, it simply asserts that if $g$ 
is analytic on some disc and bounded in norm by $r$ on this disc, 
then $fg$ (that is, $f \circ g$) is
analytic on the same disc provided that $f$ converges on the disc
of radius $r$.
\end{rem}

\begin{rem}
It follows from Lemma~\ref{lem:rdcproduct} that, for
any finite or infinite radius $r$, $L_\infty\langle\tau; r\rangle$ 
is a left module over $L_\infty\langle\tau; \infty\rangle$.
In particular, $L_\infty\langle\tau; \infty\rangle$ is stable under
multiplication, \emph{i.e.}, it is a ring. One should be extremely
careful however that $L_\infty\langle\tau; r\rangle$ are not rings
when $r$ is finite.

Nonetheless $L_\infty\langle\tau; r\rangle$ is a left module over
$R[\tau]$; it thus definitely can serve as an argument for the
functors $E_M$ and $\Lie_M$.
Moreover, $E_M(L_\infty\langle\tau\rangle)$
and $\Lie_M(L_\infty\langle\tau\rangle)$ inherit a right
action of $\tau$, turning them into $A[\tau]$-modules.
\end{rem}

\begin{cons}[Norm and topology]
One can endow these function spaces with norms and topologies as follows. After choosing coordinates for $M_L$ (Remark \ref{rem:choice-of-basis}), we have an
$A[\tau]$-linear isomorphism $E_M\big(L_\infty\langle \tau\rangle)
\simeq L_\infty\langle \tau\rangle^d$, which allows us to extend
the Gauss norm on $E_M\big(L_\infty\langle \tau\rangle)$.
We define similarly a norm on $\Lie_M\big(L_\infty\langle \tau\rangle)$.
Although these norms depend on the choice of a basis, two different 
choices lead to equivalent norms; in particular, the resulting
topologies are independant of any choice.
\end{cons}

\subsection{Logarithm and exponential}
\label{ssec:logexp}

To any Anderson $A$-motive $M$ over $R$, one can attach a logarithm
and an exponential map
$$\log_M : E_M\big(L[\![\tau]\!]\big) \longrightarrow \Lie_M\big(L[\![\tau]\!]\big),
\quad 
  \exp_M : \Lie_M\big(L[\![\tau]\!]\big) \longrightarrow E_M\big(L[\![\tau]\!]\big)$$
where $L[\![\tau]\!]$ is the ring of formal Ore series over $L$.
Although one could work out direct constructions of $\log_M$ and
$\exp_M$ in our setting, a shortcut consists in working with $M_L$
and its associated Anderson module $E$. Indeed, doing so, we
can simply define the logarithm (resp. exponential) of $M$ as
(the left-multiplication by) the logarithm (resp. exponential) of
$E$.

Those functions directly inherit the properties of the usual 
logarithm and exponential of Anderson modules. In particular, they
are $A \otimes R[\tau]$-linear and inverse one to the other.
They also exhibit important convergence properties.
We start by recalling the following classical result in the case 
of Anderson modules.

\begin{prop}
\label{prop:expphi}
Let $E$ be an Anderson module over $L$. Then
\begin{itemize}
\item[(i)] its exponential $\exp_E$ is in 
$M_d\big(L_\infty\langle\tau; \infty\rangle\big),$
\item[(ii)] there exists $\varepsilon > 0$ such that its logarithm
$\log_E$ is in $M_d\big(L_\infty\langle\tau; \varepsilon \rangle\big).$
\end{itemize}
\end{prop}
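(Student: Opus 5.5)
The plan is to work directly with the functional equation of the exponential and extract recursive bounds on its coefficients. Fix a choice of coordinates, so that $E$ is given by an $\FF$-algebra morphism $\phi : A \to M_d(L)[\tau]$. Pick a nonconstant $a \in A$ and write $\phi_a = \partial a + N + \sum_{i=1}^{s} A_i \tau^i$, where $\partial a = a I_d + N$ with $N$ nilpotent, as in Definition~\ref{def:motive}. Write $\exp_E = \sum_{n \ge 0} e_n \tau^n$ with $e_0 = I_d$ and $e_n \in M_d(L)$. The defining relation $\exp_E \cdot \partial a = \phi_a \cdot \exp_E$ then gives, coefficient by coefficient,
\[
e_n (\partial a)^{(n)} - \partial a\, e_n = \sum_{i=1}^{\min(s,n)} A_i\, e_{n-i}^{(i)},
\]
where ${}^{(k)}$ denotes the $q^k$-th power applied entrywise. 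Here $(\partial a)^{(n)} = a^{q^n} I + N^{(n)}$.

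For (i), I would solve this Sylvester-type equation for $e_n$. The operator $X \mapsto X(\partial a)^{(n)} - \partial a\, X$ equals $(a^{q^n} - a)\,\id$ plus a nilpotent operator $\mathcal N$ built from $N$ and $N^{(n)}$, with $\mathcal N^{2d} = 0$. Its inverse is therefore $\sum_{k<2d} (-1)^k \mathcal N^k (a^{q^n}-a)^{-k-1}$. Since $|a| > 1$, this inverse has norm at most $C\,|a|^{-q^n}$ for some constant $C$ independent of $n$, because $|N^{(n)}| \le \max(1,|N|)^{q^n}$ is dominated. The point I expect to need care is this interaction between $N^{(n)}$ and $|a|^{q^n}$. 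Replacing $N$ by a conjugate $\theta N \theta^{-1}$ with a suitable diagonal $\theta$ over a finite extension, I can assume $|N| \le 1$; alternatively, I can bound $\mathcal N^k$ crudely by $\max(1,|N|)^{kq^n}$, which is harmless only if $|N| < |a|^{1/(2d)}$. The conjugation trick is the cleaner route, and I would use it.

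This yields $|e_n| \le C' |a|^{-q^n} \max_{1\le i\le s} |e_{n-i}|^{q^i}$. Setting $\beta_n = -\log|e_n|/q^n$, we get $\beta_n \ge \log|a| - c/q^n + \min_i \beta_{n-i}$. Inducting on this inequality gives $\beta_n \to \infty$, since each block of $s$ steps adds at least roughly $\log|a|$, minus a summable error. Hence $|e_n| r^{q^n} \to 0$ for every $r$, which proves (i).

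For (ii), I would use $\log_E \cdot \phi_a = \partial a \cdot \log_E$, which gives a recursion of the same shape:
\[
\partial a\, \ell_n - \ell_n (\partial a)^{(n)} = \sum_{i\ge1} \ell_{n-i} A_i^{(n-i)}.
\]
The same inversion produces a bound $|\ell_n| \le C \max_i |\ell_{n-i}|\,|A_i|^{q^{n-i}}\,|a|^{-q^n}$. An induction then gives $|\ell_n| \le D^{q^n}$ for some constant $D$, so any $\varepsilon < D^{-1}$ works. Alternatively, (ii) follows from (i): $\exp_E$ has linear term $I_d$, so it is analytically invertible on a small disc by the nonarchimedean inverse function theorem, and Lemma~\ref{lem:rdcproduct} controls the compositions involved. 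I would take the direct recursion as the primary route.
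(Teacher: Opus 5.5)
Your argument is correct, but it is not the paper's route. The paper cites Anderson for (i), and deduces (ii) from (i) by the nonarchimedean inverse function theorem; this is the alternative you mention for (ii). You instead prove both parts from scratch. You solve the functional equations for a single nonconstant $a\in A$ coefficient by coefficient, using that $X\mapsto X(\partial a)^{(n)}-\partial a\,X$ equals $(a^{q^n}-a)$ plus a nilpotent operator. Once $|N|\le 1$, its inverse therefore has norm at most $|a|^{-q^n}$.

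What your route buys:
\begin{itemize}
\item a self-contained proof that works directly for general $A$ and over the product of fields $L_\infty$ (Anderson's result is formulated for $A=\FF[t]$, so citing it tacitly involves restricting to $\FF[a]$);
\item explicit bounds, such as $|\ell_n|\le D^{q^n}$ and hence an explicit radius for the logarithm;
\item a proof of (ii) that does not depend on (i).
\end{itemize}
The paper's route is shorter but outsources all the analysis.

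Two small corrections.

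First, the decomposition should read $\phi_a=\partial a+\sum_{i=1}^{s}A_i\tau^i$ with $\partial a=aI_d+N$; as written you count $N$ twice.

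Second, a \emph{diagonal} conjugation cannot achieve $|N|\le 1$ in general. It leaves the diagonal entries of $N$ unchanged, and a nilpotent matrix can have large diagonal entries: for example $\begin{pmatrix} x & -x \\ x & -x\end{pmatrix}$ with $|x|>1$. The fix is to conjugate by some $\theta\in\mathrm{GL}_d(L)$ that puts $N$ into Jordan form. This has entries in $\{0,1\}$ and needs no field extension, since $N$ is nilpotent. You should then say why the change of coordinates is harmless. The new exponential has coefficients $e'_n=\theta e_n(\theta^{-1})^{(n)}$, so $|e_n|\le|\theta^{-1}|\,|e'_n|\,|\theta|^{q^n}$, and similarly for the $\ell_n$. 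Radii of convergence therefore change only by the factor $|\theta|$, and both (i) and (ii) transfer back to the original coordinates.
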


\begin{proof}
The first assertion is \cite[Proposition~2.14]{anderson} and the second follows from the non-archimedean inverse function theorem.
\end{proof}

In our setting, we have the following formulation.

\begin{prop}
\label{prop:expM}
The function $\exp_M$ induces by restriction and corestriction
an $A[\tau]$-linear map
$$\exp_M : \Lie_M\big(L_\infty\langle\tau\rangle\big) \longrightarrow 
  E_M\big(L_\infty\langle\tau\rangle\big)$$
which is uniformly locally an invertible isometry, \emph{i.e.}
there exists $\varepsilon > 0$ such that, for all
$x \in \Lie_M(L_\infty\langle\tau\rangle)$, $\exp_M$ induces
an invertible isometry between the disc of center $x$ and
radius $\varepsilon$ and the disc of center $\exp_M(x)$ and
radius $\varepsilon$.
\end{prop}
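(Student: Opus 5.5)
Fix a choice of coordinates for $M_L$ as in Remark~\ref{rem:choice-of-basis}. By the base change identities \eqref{eq:base-change} this identifies $E_M(L_\infty\langle\tau\rangle)$ and $\Lie_M(L_\infty\langle\tau\rangle)$ with $L_\infty\langle\tau\rangle^d$, and $\exp_M$ with left multiplication by $\exp_E = \sum_{i\ge0} e_i\tau^i \in M_d(L_\infty\langle\tau;\infty\rangle)$, with $e_0 = \mathrm{id}$. The plan has four steps: well-definedness, linearity, the isometry estimate for $\exp_M$, and the same estimate for $\log_M$, which gives invertibility.

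\emph{Well-definedness.} Take $f \in L_\infty\langle\tau\rangle^d$. Each entry of $\exp_E$ lies in $L_\infty\langle\tau; r\rangle$ for every $r$. Applying Lemma~\ref{lem:rdcproduct} entrywise with $s=1$ and $r \ge \Vert f\Vert$ shows $\exp_E f \in L_\infty\langle\tau\rangle^d$. A finite sum of such products stays in the space, so matrix multiplication causes no trouble.

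\emph{Linearity.} $\exp_M$ is $A\otimes R[\tau]$-linear on formal series. The right action of $\tau$ commutes with left multiplication by $\exp_E$. Hence the restricted map is $A[\tau]$-linear.

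\emph{Local isometry of $\exp_M$.} Since $\exp_M$ is additive (indeed $\FF$-linear), it suffices to find $\varepsilon$ with $\Vert \exp_E h\Vert = \Vert h\Vert$ for all $\Vert h\Vert<\varepsilon$; the statement then holds on every disc, centred anywhere. Write $h = \sum_j h_j\tau^j$. The coefficient of $\tau^n$ in $\exp_E h - h$ is $\sum_{i\ge1,\,i+j=n} e_i h_j^{(q^i)}$, where $h_j^{(q^i)}$ means the entrywise $q^i$-th power. Its norm is at most
\[
\sup_{i\ge1} |e_i|\,\Vert h\Vert^{q^i}.
\]
Because $\exp_E$ is entire, $|e_i|\rho^{q^i}\to 0$ for any $\rho$. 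Hence $C := \sup_{i\ge1}|e_i|$ is finite, and for $\Vert h\Vert\le\varepsilon<1$ we get
\[
\Vert \exp_E h - h\Vert \le C\,\varepsilon^{q-1}\Vert h\Vert.
\]
Choosing $\varepsilon$ with $C\varepsilon^{q-1}<1$ gives $\Vert \exp_E h\Vert = \Vert h\Vert$ by the ultrametric inequality. So $\exp_M$ is an isometry of each disc of radius $\varepsilon$ into the corresponding disc.

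\emph{Surjectivity onto the disc.} This is the main obstacle. I would use Proposition~\ref{prop:expphi}(ii): $\log_E = \sum_i \ell_i\tau^i$ lies in $M_d(L_\infty\langle\tau;\varepsilon_0\rangle)$. Take $g \in L_\infty\langle\tau\rangle^d$ with $\Vert g\Vert$ small. Lemma~\ref{lem:rdcproduct} with $r=\varepsilon_0$ and $s=1$ shows $\log_E g \in L_\infty\langle\tau\rangle^d$. The estimate above, with $\ell_i$ in place of $e_i$ and $\sup_{i\ge1}|\ell_i|\varepsilon_0^{q^i}<\infty$, gives $\Vert \log_E g\Vert = \Vert g\Vert$ once $\Vert g\Vert$ is small. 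The identities $\exp_E\log_E = \log_E\exp_E = \mathrm{id}$ hold in $L[\![\tau]\!]$. They also hold as operators on these small elements, because the composite products converge coefficientwise; associativity of the relevant double sums is justified by the absolute convergence already established.

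Shrinking $\varepsilon$ so that both estimates apply, $\exp_M$ maps the disc of radius $\varepsilon$ around $0$ bijectively and isometrically onto itself. Translating by $x$ and using additivity gives the disc around $\exp_M(x)$ for every $x$. Finally, the norms coming from different choices of coordinates are equivalent, so after possibly shrinking $\varepsilon$ the statement does not depend on the choice.
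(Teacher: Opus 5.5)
Your proposal is correct and follows the paper's proof step for step: well-definedness from Lemma~\ref{lem:rdcproduct} and the entireness of $\exp_E$, the isometry near $0$ from the bound on $e_i\tau^i x$ for $i\geq 1$ together with additivity, and local invertibility from Lemma~\ref{lem:rdcproduct} combined with Proposition~\ref{prop:expphi}(ii), with the estimates written out in full. The only slip is your closing remark, since shrinking $\varepsilon$ does not turn an isometry for one norm into an isometry for an equivalent norm; this does not matter, because your argument runs verbatim in any choice of coordinates, where $\exp_M$ is again an entire matrix series with constant term the identity.
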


\begin{proof}
Let $E$ be the Anderson module over $L$ associated to $M_L$ as above.
After Proposition~\ref{prop:expphi}.(i),
the first assertion is a direct consequence of Lemma~\ref{lem:rdcproduct}.

Since $\exp_M$ is additive, it is enough to prove the second assertion for
$x = 0$. Writing $\exp_E = \sum_i e_i \tau^i$, if $x$ has small enough Gauss norm,
then $e_i \tau^i x$ has norm strictly smaller than $\Vert x \Vert$.
This ensures that $\Vert \exp_E x \Vert = \Vert x \Vert$ and so,
by additivity again, $\exp_M$ is locally an isometry.
The fact that it is locally invertible follows by combining
Lemma~\ref{lem:rdcproduct} and Proposition~\ref{prop:expphi}.(ii).
\end{proof}

\subsection{Euler factors and $L$-series}
\label{ssec:eulerfactor}

We fix an Anderson $A$-motive $M$ over $R$ together with a maximal
ideal $\p$ of $R$. We write $\Fp := R/\p$ for the residue field of
$R$ at $\p$ and we set $d := [\Fp:\FF]$.

Following Taelman~\cite{taelman} and
Angles--Ngo Dac--Tavares Ribeiro~\cite{ANT},
we would like to define the local factor of $R$ at $\p$ as the Fitting
ideal of $E_M(\Fp[\tau])$, viewed as a module over the commutative ring $A[\tau]$.
This definition should nevertheless be considered with caution because
it is not true in general that $\Fitt_{A[\tau]}(E_M(\Fp[\tau]))$ is a
principal ideal, while the local factor needs to be an element and not
an ideal.

In this article, we work around this issue by tensoring by $K$ over $A$.
We then consider the following ideal of the commutative ring $K[\tau]$:
$$Z_\p(M) := \Fitt_{K[\tau]}\big(K \otimes_A
E_M(\Fp[\tau])\big).$$

\begin{lem}
\label{lem:ZpFitt}
The ideal $Z_\p(M)$ is coprime with the ideal $(\tau)$.
\end{lem}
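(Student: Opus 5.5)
The plan is to show that $\tau$ acts invertibly on the finite-dimensional $K$-vector space $V := K \otimes_A E_M(\Fp[\tau])$, after first checking that $V$ is a finitely generated torsion $K[\tau]$-module. Once this is done, the Fitting ideal $Z_\p(M)$ is generated by the characteristic polynomial of $\tau$ acting on $V$. A polynomial in $K[\tau]$ is coprime to $\tau$ exactly when its constant term is nonzero. This constant term is $\pm\det(\tau \mid V)$, so it is nonzero precisely when $\tau$ is invertible on $V$.

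\textbf{Step 1: structure of $E_M(\Fp[\tau])$.} Since $\Fp$ is a finite field with $q^d$ elements, $\tau^d$ is central in $\Fp[\tau]$. The ring $\Fp[\tau]$ is free of rank $d$ over $\FF[\tau^d]$, and the right action of $\tau$ on $\Fp[\tau]$ commutes with the left $R[\tau]$-action. I would use the base change formula~\eqref{eq:base-change} to write
\[
E_M(\Fp[\tau]) = \Hom_{\Fp[\tau]}(M_{\Fp}, \Fp[\tau]).
\]
Here $M_{\Fp}$ is finite projective, hence free, over the left PID $\Fp[\tau]$, say of rank $r$. Hence $E_M(\Fp[\tau]) \simeq \Fp[\tau]^r$ as a right $\Fp[\tau]$-module, so it is finitely generated over $\FF[\tau]$. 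The $A$-action factors through a finite-dimensional quotient: $M_{\Fp}$ is finite over $\Fp\otimes A$, and $\Fp\otimes A$ acts through $\Fp[\tau]$-endomorphisms, which are finite over $\FF[\tau^d]$. I therefore expect $E_M(\Fp[\tau])$ to be torsion over $A$ up to a $\tau$-direction; more precisely, it is finitely generated over $A[\tau]$ and its rank behaves like a "$\tau$-adic" object. Tensoring with $K$ then gives a finitely generated $K[\tau]$-module. Its Fitting ideal is nonzero because the $A$-action together with the $\tau$-action yields an integral relation: $\tau^d$ is a central endomorphism, and the characteristic polynomial of Frobenius provides a relation between $\tau^d$ and $A$, which forces torsion.

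\textbf{Step 2 (main point): invertibility of $\tau$.} It suffices to show that right multiplication by $\tau$ on $E_M(\Fp[\tau])$ becomes invertible after $K\otimes_A$, i.e.\ that its cokernel is $A$-torsion. The cokernel of right multiplication by $\tau$ is, by exactness of $E_M$ applied to $0\to\Fp[\tau]\xrightarrow{\cdot\tau}\Fp[\tau]\to\Fp\to0$, equal to $E_M(\Fp)$ with $\tau$ acting by zero. By the earlier lemma this equals $\Lie_M(\Fp)$. The module $\Lie_M(\Fp)$ is a finite set, annihilated by a power of $a\otimes1-1\otimes a$. It is therefore $A$-torsion: for instance, pick $a\in A$ nonconstant, so that $a$ acts on $\Lie_M(\Fp)$ with eigenvalue the image of $a$ in $\Fp$. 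Some polynomial $P(a)$ with $P$ the minimal polynomial of that image over $\FF$ kills it, and $P(a)\neq0$ in $A$. Likewise the kernel of $\cdot\tau$ is zero, since $\Fp[\tau]$ has no $\tau$-torsion and $E_M$ is left exact. So $\cdot\tau$ is injective with $A$-torsion cokernel, and after tensoring with $K$ it is bijective. Thus $\tau$ is a unit on $V$ and the Fitting ideal contains an element with nonzero constant term; equivalently $Z_\p(M)+(\tau)=K[\tau]$.

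I expect the delicate part to be making sure that $V$ really is torsion over $K[\tau]$, so that the Fitting ideal is nonzero and the characteristic polynomial argument applies. Coprimality itself, however, only requires that $\tau$ act invertibly modulo the Fitting ideal. I would argue directly via the standard fact that $\Fitt(V)\subseteq \mathrm{Ann}(V)$ and that $\mathrm{Ann}(V)$ and $\Fitt(V)$ have the same radical. Since $V/\tau V=0$, Nakayama at the maximal ideals containing $\tau$ shows that $V$ is zero there. Hence no prime containing $\tau$ contains $\Fitt(V)$, which gives coprimality without needing the torsion statement.
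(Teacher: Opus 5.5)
Your proof is correct, but its key step differs from the paper's. The paper notes that $E_M(\Fp[\tau])$ is finitely generated over $\FF[\tau]$, hence torsion over $A[\tau]$: any nonconstant $a\in A$ satisfies a monic equation over $\FF[\tau]$ by the determinant trick. So $V=K\otimes_A E_M(\Fp[\tau])$ is a finite-dimensional $K$-vector space. The paper then uses only \emph{injectivity} of $\tau$ (exactness of $E_M$ plus flatness of $K$) to conclude that $\tau$ is an automorphism of $V$, so the characteristic polynomial generating $Z_\p(M)$ has nonzero constant term.

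You instead prove \emph{surjectivity} of $\tau$ on $V$. You identify the cokernel of $\cdot\tau$ with $E_M(\Fp)=\Lie_M(\Fp)$, which is finite and so vanishes after $K\otimes_A$. You then conclude by Nakayama and $\sqrt{\Fitt}=\sqrt{\mathrm{Ann}}$. Even more directly, base change of Fitting ideals gives $Z_\p(M)\cdot K[\tau]/(\tau)=\Fitt_K(V/\tau V)=\Fitt_K(0)=K$.

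Your route needs only finite generation of $V$ and never uses torsion. The paper's route additionally shows that $V$ is finite-dimensional with $\tau$ bijective, and this is reused in Proposition~\ref{prop:eulertaun} and Proposition~\ref{prop:eulerval}.

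Some smaller points:
\begin{itemize}
\item Your Step~1 discussion of torsion (``up to a $\tau$-direction'', ``characteristic polynomial of Frobenius'') is not an argument as written. Since your last paragraph bypasses torsion, this does not affect correctness. If you want the torsion statement, the determinant trick above gives it in one line.
\item $\Fp[\tau]$ has rank $d^2$, not $d$, over $\FF[\tau^d]$.
\item In Step~2 it is a power of $P(a)$, not $P(a)$ itself, that kills $\Lie_M(\Fp)$. Finiteness of $\Lie_M(\Fp)$ already suffices there.
\end{itemize}
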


\begin{proof}
We first notice that $E_M(\Fp[\tau])$ is finitely generated over 
$\FF[\tau]$ (because $\Fp[\tau]$ is).
Hence $E_M(\Fp[\tau])$ is finitely generated and torsion over $A[\tau]$.
Besides, the multiplication by $\tau$ is injective on $E_M(\Fp[\tau])$
since it is injective on $\Fp[\tau]$ and the functor $E_M$ is exact.
\end{proof}

Given that $K[\tau]$ is a principal ideal domain, the ideal 
$Z_\p(M)$ is principal, and we let $z_\p(M) \in K[\tau]$ 
be one of its generator, which is defined up to multiplication by a 
nonzero element of $K$.
After Lemma~\ref{lem:ZpFitt}, we know moreover that $z_\p(M)$
is nonzero modulo $\tau$; we can then normalize it in such a way that
its constant coefficient is $1$.

\begin{prop}
\label{prop:eulertaun}
For all places $\p$,
the polynomial $z_\p(M)$ is a polynomial in $\tau^{\deg \p}$.
\end{prop}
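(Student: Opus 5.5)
The plan is to reduce the statement to a determinant computation for a $\mathbb{Z}/n$-graded operator, where $n := \deg \p = [\Fp:\FF]$. Write $N := K \otimes_A E_M(\Fp[\tau])$.

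\emph{Step 1: $N$ is finite-dimensional and $z_\p(M)$ is a characteristic polynomial.} By the proof of Lemma~\ref{lem:ZpFitt}, $E_M(\Fp[\tau])$ is finitely generated and torsion over $A[\tau]$. Hence $N$ is a finitely generated torsion module over the principal ideal domain $K[\tau]$, so it is finite-dimensional over $K$. Moreover the right action of $\tau$ on $N$ is injective, by exactness of $E_M$ and flatness of $K$ over $A$, hence bijective. For a finite-dimensional $K$-space $N$ with a $K$-linear endomorphism $T$ (here, right multiplication by $\tau$), the Fitting ideal over $K[\tau]$ is generated by the characteristic polynomial $\det(X - T \mid N)$ evaluated at $X = \tau$. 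Since $T$ is invertible, the normalized generator with constant term $1$ is
\[
z_\p(M) = \det\big(1 - \tau\, T^{-1} \mid N\big).
\]
It therefore suffices to show that $\det(1 - X\,T^{-1} \mid N) \in K[X^n]$, or equivalently that $\det(X - T \mid N) \in K[X^n]$.

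\emph{Step 2: a $\mathbb{Z}/n$-grading after extending constants.} The module $E_M(\Fp[\tau]) = \Hom_{R[\tau]}(M, \Fp[\tau])$ carries a right action of $\Fp \subset \Fp[\tau]$. This action commutes with $A$ and satisfies $x\tau\cdot\lambda = x\lambda^q\cdot\tau$. Extend scalars along $\FF \to \Fp$ on the $K$-side: set $K_\p := \Fp \otimes K$, which is a field when $\FF$ is the full constant field of $K$; otherwise one works with a product of fields. The characteristic polynomial of $T$ is unchanged by this extension. The commuting $\Fp$-action then splits
\[
N_\p := \Fp \otimes N = \bigoplus_{i \in \mathbb{Z}/n} N_i ,
\]
where $N_i$ is the eigenspace on which $\lambda \in \Fp$ acts through the embedding $\lambda \mapsto \lambda^{q^i}$ into $\Fp \subset K_\p$. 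These eigenspaces are $K_\p$-subspaces. The twisting relation $\tau\lambda = \lambda^q\tau$ shows that $T$ maps $N_i$ into $N_{i+1}$, or $N_{i-1}$ depending on conventions.

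\emph{Step 3: block-cyclic determinant.} In a basis adapted to this grading, $T$ is a block-cyclic matrix. For such a matrix,
\[
\det(X - T) = \det\big(X^n - T^n \mid N_0\big)\cdot X^{\dim N_\p - n\dim N_0},
\]
and the second factor is trivial because $T$ is invertible, which forces all the $N_i$ to have the same dimension. This follows either by the standard Schur-complement computation or by observing that $\det(X - T)$ is invariant under $X \mapsto \zeta X$ after conjugating by $\bigoplus_i \zeta^{i}\,\id_{N_i}$. The second argument needs $n$-th roots of unity and fails when $p \mid n$, so I would use the direct block computation, which works over any ring. Hence $\det(X - T) \in K_\p[X^n] \cap K[X] = K[X^n]$, and therefore $z_\p(M) \in K[\tau^n]$.

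The main obstacle is Step 2. One must check carefully that the right $\Fp$-action on $E_M(\Fp[\tau])$ commutes with the $A$-action. One must also check that the eigenspace decomposition over $K_\p$ is well defined, in particular when $K_\p$ is only a product of fields, and that $T$ genuinely shifts the grading by one. All of this should follow from the bimodule structure on $\Fp[\tau]$ over the pair $(R[\tau], \Fp[\tau])$ recorded in the definition of the functor of points.
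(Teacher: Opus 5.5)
Your proposal is correct and follows essentially the same route as the paper. After extending scalars, the right $\Fp$-action splits $K\otimes_A E_M(\Fp[\tau])$ into $\deg\p$ Frobenius-twisted eigenspaces that $\tau$ permutes cyclically; this reduces everything to the action of $\tau^{\deg\p}$ on the $0$-th piece, followed by an intersection with $K[\tau]$. The differences are cosmetic: you extend only to $\Fp\otimes K$ rather than to $\overline{K}$ and compute the block-companion determinant by hand, whereas the paper writes $\overline{V}\cong V_0\otimes_{\overline{K}[\tau^d]}\overline{K}[\tau]$ and invokes base change of Fitting ideals.
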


\begin{proof}
This is well-known, \emph{e.g.} \cite[Proposition~13]{anderson-L}, but let us sketch the argument in our setting.
Let $\overline{K}$ denote an algebraic closure of $K$. Set $V:=K\otimes_A E_M(\Fp[\tau])$ and $\overline{V}:=\overline{K}\otimes_K V$.
After choosing an embedding $\Fp\hookrightarrow \overline{K}$, all the embeddings of $\Fp$ into $\overline{K}$ are given by $c\mapsto c^{q^i}$, for $i\in \{0,\ldots,d-1\}$, $d=\deg \p$.
We obtain decompositions
\begin{equation}\label{eq:decomposition-over-alg-clo}
\overline{K}\otimes \Fp =\prod_{i=0}^{d-1}{\overline{K}} \quad \text{and} \quad \overline{V} := \bigoplus_{i=0}^{d-1} V_i
\end{equation}
where $V_i$ is the $\overline{K}$-subspace of $\overline{V}$ on which $v\cdot c=(c^{q^i}\otimes_K 1) v$, for all $c\in \Fp$. 

From Lemma \ref{lem:ZpFitt} and its proof, $V$ is finite-dimensional over $K$ and the right action of $\tau$ is a linear automorphism. Over $\overline{K}$, the relation $\tau c=c^q\tau$ for $c\in \Fp$ implies that $V_{i+1}=V_i \tau$ and, by immediate induction, that $V_i=V_0\tau^i$. Since $V_0$ is a module over the commutative ring $\overline{K}[\tau^d]$, we obtain a canonical isomorphism
\[
\overline{V}=V_0\otimes_{\overline{K}[\tau^d]}\overline{K}[\tau].
\] 
Therefore, applying twice the base-change formula for Fitting ideals \href{https://stacks.math.columbia.edu/tag/07ZA}{[Stack Project: 07ZA(3)]}, we obtain 
\[
\Fitt_{K[\tau]}(V)=\Fitt_{\overline{K}[\tau]}(\overline{V})=\Fitt_{\overline{K}[\tau^d]}(V_0).
\]
We obtain $z_{\p}(M)\in K[\tau]\cap \overline{K}[\tau^d]=K[\tau^d]$. 
\end{proof}

\begin{prop}
\label{prop:eulerval}
There exists a constant $c \in (0,1)$ such that, for all place
$\p$, writing $z_\p(M) = 1 + z_1 \tau + \cdots + z_m \tau^m$,
we have $\vert z_i \vert \leq c^i$ for all $i$.
\end{prop}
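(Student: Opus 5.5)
The plan is to reduce to an explicit determinant over $\FF[t]$ for a well-chosen nonconstant $t\in A$, and then bound its coefficients by counting degrees. The Newton polygon will carry the bound back to $K[\tau]$.

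\textbf{Setup.} Fix a nonconstant $t\in A$, so that $A$ is finite projective over $\FF[t]$ and $\infty$ is the only place of $K$ above the place $1/t$ of $\FF(t)$. For a maximal ideal $\p$ of $R$, the module $M/\p M=\Fp[\tau]\otimes_{R[\tau]}M$ is finite projective over the left PID $\Fp[\tau]$, hence free of some rank $r$. The rank $r$ equals the rank of $M_L$, so it is independent of $\p$. Consequently
\[
E_M(\Fp[\tau])=\Hom_{\Fp[\tau]}(M/\p M,\Fp[\tau])\cong \Fp[\tau]^r ,
\]
which is free of rank $r\deg\p$ over $\FF[\tau]$. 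In this basis, $t$ acts on the right through a matrix $\Phi_t$. This matrix is the reduction mod $\p$ of the matrix of $t$ on $M$ in local generators.

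The key uniform input is a bound $\deg_\tau \Phi_t\le w$, with $w$ independent of $\p$. I will obtain it by covering $\mathrm{Spec}\,R$ by finitely many opens on which $M$ is free and taking the maximal $\tau$-degree of the matrices of $t$ over this finite cover. Moreover, $\Phi_t\equiv t\cdot\mathrm{id}+N \pmod{\tau}$ with $N$ nilpotent, because $t\otimes1-1\otimes t$ is nilpotent on $\Lie(M)$.

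\textbf{Degree count.} The Fitting ideal of $\FF(t)\otimes_{\FF[t]}E_M(\Fp[\tau])$ over $\FF(t)[\tau]$ is generated by $P(t,\tau)=\det(t\cdot \id-\Phi_t)$, a polynomial in $t$ and $\tau$. Expanding the determinant, every monomial $t^j\tau^i$ appearing in $P$ satisfies $j+i/w\le n$, where $n=r\deg\p$. Indeed, each factor of $\tau$ must come from an entry of $\Phi_t$ of $\tau$-degree at least $1$, so $w$ powers of $\tau$ cost at least one factor of $t$ (entries have $t$-degree $0$). The constant term of $P$ in $\tau$ is a monic polynomial in $t$ of degree $n$: it is the characteristic polynomial of a unipotent-shifted matrix, hence a power of $(t-\theta)$-type factors. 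Dividing by it gives a normalized polynomial whose $\tau^i$-coefficient has $\infty$-adic absolute value at most $|t|^{-i/w}=c_0^{\,i}$ with $c_0=q^{-\deg t/w}<1$. Some care is needed to check that the constant term has full $t$-degree $n$ with $\infty$-adic size $|t|^n$; this is exactly where the nilpotence of $N$ enters.

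\textbf{Descent from $\FF(t)$ to $K$.} The normalized $P$ is the norm from $K[\tau]$ to $\FF(t)[\tau]$ of $z_\p(M)$, up to normalization, by the transitivity of Fitting ideals and determinants under restriction of scalars along $A\supset\FF[t]$. Over $\CI$, the statement $|z_i|\le c^i$ for all $i$ is equivalent to saying that the Newton polygon of $z_\p(M)$ has all slopes $\le\log c$. Equivalently, all roots $\rho$ of the reversed polynomial satisfy $|\rho|\le c$ (treating $\tau$ as a commuting variable, which is legitimate since $z_\p(M)\in K[\tau]$ is commutative). The roots of the norm are the Galois conjugates of the roots of $z_\p(M)$, and absolute values are preserved because $\infty$ is the unique place above $1/t$. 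The slopes of $P$ are bounded by $\log c_0$, so the same bound holds for $z_\p(M)$. Since the coefficients of a monic-at-constant polynomial are elementary symmetric functions of such roots, this yields $|z_i|\le c_0^{\,i}$.

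\textbf{Main obstacle.} I expect the hardest step to be the uniformity of $w$ and of the normalization in $\p$. The $\tau$-degree bound needs the finite cover argument; I would first try global generators of the projective module $M$ and a presentation of $t$ in them. Verifying that the constant term really has exact size $|t|^n$ also requires care, and I would handle it via nilpotence of $N$ on $\Lie$ as explained above.
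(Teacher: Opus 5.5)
Your proof works, up to one fixable step discussed below, but it finishes differently from the paper.

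\textbf{Comparison with the paper.} Both arguments rest on the same uniform input: a fixed nonconstant $t\in A$ acts on a suitable $\FF[\tau]$-basis of $E_M(\Fp[\tau])$ through a matrix $\Phi_t$ of $\tau$-degree at most $w$, with $w$ independent of $\p$.

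The paper then argues with roots. It writes $z_\p(M)=\prod_\lambda(1-\lambda^{-1}\tau)$, with $\lambda$ running over the eigenvalues of $\tau$ on $\CI\otimes_A E_M(\Fp[\tau])$. There $t$ acts as a scalar, so $t$ is an eigenvalue of $\Phi_t(\lambda)$. Hence $|t|\le\max_{j\le w}|\lambda|^j$, and $|\lambda^{-1}|\le|t|^{-1/w}$ follows at once.

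You argue with coefficients. A degree count bounds the normalized generator $P(t,\tau)/P(t,0)$ of the Fitting ideal over $\FF(t)[\tau]$. You then transport the bound to $K[\tau]$ by identifying this generator with $N_{K/\FF(t)}(z_\p(M))$ and comparing Newton polygons. You get the same constant $c=|t|^{-1/w}$ and an explicit polynomial. The price is the norm-and-Newton-polygon descent, which the paper avoids by tensoring with $\CI$ over $A$ rather than over $\FF[t]$.

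Two of your concerns are unnecessary:
\begin{itemize}
\item The descent only needs that $z_\p(M)$ divides its norm in $K[\tau]$, so that its reciprocal roots are among those of the norm. The uniqueness of the place above $1/t$ is not used.
\item Nilpotence on $\Lie(M)$ plays no role in the size of the constant term. Indeed $P(t,0)=\det(t\cdot\id-\Phi_t(0))$ is the characteristic polynomial of a matrix with entries in $\FF$. It is therefore monic of degree $n$ in $t$ with coefficients in $\FF$, so $|P(t,0)|=|t|^n$ automatically.
\end{itemize}

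\textbf{The step to rephrase.} Covering $\mathrm{Spec}\,R$ by finitely many opens on which $M$ is free presupposes that $M$ is Zariski-locally free around every closed point. This is not automatic, because Nakayama's lemma fails for $R_\p[\tau]$-modules, which are not finite over $R_\p$. For instance, with $\pi\in\p$, the element $1+\pi\tau$ lifts the basis $1$ of $\Fp[\tau]$ but generates a proper left ideal of $R_\p[\tau]$.

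You do not need local freeness. As in the paper, freeness of $M_L$ spreads out to freeness over $R[f^{-1}][\tau]$ for some nonzero $f\in R$, which handles every $\p\nmid f$ with one basis. At each of the finitely many $\p\mid f$, take an arbitrary basis of the free $\Fp[\tau]$-module $E_M(\Fp[\tau])$. Then $w$ is a maximum over finitely many choices. Rewriting an $\Fp[\tau]$-basis as an $\FF[\tau]$-basis does not raise $\tau$-degrees, since $c\tau^k e=(ce^{q^k})\tau^k$ for $c,e\in\Fp$.
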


\begin{proof}
Let $\CI$ be an algebraic closure of $K_\infty$.
Note that we have
\[
z_{\p}(M)=\prod_{\lambda}(1-\lambda^{-1}\tau)
\]
where the product is indexed over the eigenvalues of the $\CI$-linear right-action of $\tau$ on the finite dimensional $\CI$-vector space $\overline V_\infty:=\CI\otimes_A E_M(\Fp[\tau])$ counted with multiplicity. The proposition follows if one is able to show that $\vert \lambda \vert>d$ for some $d>1$ independent of $\p$ and the eigenvalue $\lambda$.

Choose a nonconstant element $a\in A$. We claim that there is an integer $s\geq 1$, dependent of $a$ but independent of $\p$, with the following property: 
one can choose a basis of the finite free right $\Fp[\tau]$-module $N_{\p} :=E_M(\Fp[\tau])$ in which the action of $a$ is represented by a matrix of $\tau$-degree at most $s$. 
Indeed, set $N:=E_M(R[\tau])=\Hom_{R[\tau]}(M,R[\tau])$ (\emph{i.e.} the comotive). Because $M$ is finite projective as a left $R[\tau]$-module, $N$ is finite projective as a right $R[\tau]$-module \cite[\S 2.Exercice 7]{lam}. In addition, by  \cite[\S 2.Exercice 20]{lam}, the formation of $N$ commutes with base-change; that is, for any $R$-algebra $S$, we have 
\[
N_S:=N\otimes_{R[\tau]}S[\tau] \cong \Hom_{S[\tau]}\big(M_S,S[\tau]\big).
\]
In particular, $N_{\Fp}\cong N_{\p}$ and $N_L$ is a free right $L[\tau]$-module. Choosing a basis of $N_L$ and inverting the finitely many denominator occuring in this isomorphism and its inverse, we find a nonzero $f\in R$ such that $N_{R[f^{-1}]}$ is free as a right $R[f^{-1}][\tau]$-module. There are only finitely many maximal ideals of $R$ containing $f$; at each of them $N_{\p}$ is free as $\Fp[\tau]$ is a (right) skew principal ideal domain. Choosing a basis at these finitely many places and increasing $s$ if necessary proves the claim.

Fix $\p$ and $d=\deg \p$. We shall show that $\vert \lambda\vert \geq \vert a \vert^{1/s}$, which finishes the proof. Fix a basis of $N_{\p}$ over $\Fp[\tau]$ as above; it induces a basis of $N_{\p}$ over $\mathbb{F}[\tau]$ by telescoping. The action of $a$ in this latter basis can be written as a polynomial in $\tau$ of degree at most $s$ and coefficients in matrices in $\mathbb{F}$.
Given $\lambda\in \CI$ an eigenvalue of $\tau$ and $v$ an associated eigenvector and comparing norms, we obtain 
\[
1<\vert a \vert \leq \max_{0\leq j\leq s} \vert \lambda \vert^{j},
\]
hence $\vert a \vert\leq \vert \lambda \vert^s$, as desired.
\end{proof}

\begin{deftn}[$L$-series]
Let $J$ be a nonzero ideal of $R$.
The \emph{$L$-series} of $M$ relative to $J$ is the function $L^{(J)}(M; \tau)$ defined by
$$L^{(J)}(M; \tau) = \prod_{\p \nmid J} z_\p(M)^{-1}$$
where the notation means that the product runs over all prime ideals $\p$ of $R$
not containing $J$.

\noindent
When $J = R$, we simply write $L(M; \tau)$ for $L^{(R)}(M; \tau)$.
\end{deftn}

It follows from Propositions~\ref{prop:eulertaun} and~\ref{prop:eulerval}
that $L^{(J)}(M; \tau)$ converges in both rings $K[\![\tau]\!]$ and
$$K_\infty\langle\tau\rangle := \Big\{ \sum_i a_i \tau^i \,\big|\, a_i \in K_\infty, \, 
\lim_{i \to \infty} \vert a_i \vert = 0 \Big\}.$$
Indeed, the fact that $z_\p(M)$ is a polynomial in $\tau^{\deg \p}$
implies the formal convergence, while the estimation of the $\infty$-adic
valuations shows the convergence in $K_\infty\langle\tau\rangle$.

\section{The module of units and the class module}
\label{sec:units}
Following the work of Anglès--Ngo Dac--Tavares Ribeiro \cite{ANT}, and building on foundational ideas of Taelman \cite{taelman}, we introduce the relevant \emph{module of units}.
\begin{deftn}[Module of units]
Let $J$ be a nonzero ideal of $R$.
The \emph{module of $J$-units of $M$} is:
$$U_M^{(J)} = \big\{ x \in \Lie_M\big(L_\infty\langle \tau\rangle\big)
\,\big|\,
\exp_M(x) \in E_M\big(J[\tau]\big) \big\}.$$

\noindent
When $J = R$, we simply write $U_M$ for $U_M^{(R)}$.
\end{deftn}

In other words, $U_M^{(J)}$ is the kernel of the map
$$e_M^{(J)} : \Lie_M(L_\infty\langle \tau\rangle) \longrightarrow
\frac{E_M\big(L_\infty\langle \tau\rangle\big)}{E_M\big(J[\tau]\big)} \simeq
E_M\big(L_\infty/J\langle \tau\rangle\big)$$
induced by the exponential.
Here, we note that the last isomorphism follows from the exactness of $E_M$.
The cokernel of $e_M^{(J)}$ will also play a central role; by definition,
it is the \emph{class module of $M$ relative to $J$} and it will be
denoted by $H_M^{(J)}$ (or just $H_M$ when $J = R$).
We therefore have the following fundamental exact sequence:
$$0 \longrightarrow
U_M^{(J)} \longrightarrow 
\Lie_M\big(L_\infty\langle \tau\rangle\big) \stackrel{e_M^{(J)}}\longrightarrow
E_M\big(L_\infty/J\langle \tau\rangle\big) \longrightarrow
H_M^{(J)} \longrightarrow 0.$$
Noticing that $e_M^{(J)}$ is $A[\tau]$-linear, we conclude that 
$U_M^{(J)}$ and $H_M^{(J)}$ are both $A[\tau]$-modules.

\subsection{Finiteness of the class module}
If $Q$ is a quotient of $E_M(L_\infty\langle \tau\rangle)$
by an $\FF[\tau]$-submodule, we endow it with the quotient seminorm
defined by
$$\Vert q \Vert = \inf_{x \in \text{pr}^{-1}(q)} \Vert x \Vert$$
where $\text{pr} : E_M(L_\infty\langle \tau\rangle) \to Q$
is the canonical projection. We underline that $\Vert \cdot \Vert$
on $Q$ is not necessarily a norm: in full generality, we can have
$\Vert q \Vert$ = 0 for some nonzero $q \in Q$.

\begin{lem}
\label{lem:fgtopology}
Let $Q$ be a quotient of $E_M(L_\infty\langle \tau\rangle)$
by a $\FF[\tau]$-submodule. We assume that:
\begin{itemize}
\item $Q$ is bounded, \emph{i.e.} there exists a constant $R > 0$
such that $\Vert q \Vert \leq R$ for all $q \in Q$,
\item $Q$ is discrete, \emph{i.e.} there exists a constant $r > 0$
such that $\Vert q \Vert \geq r$ for all $q \in Q$, $q \neq 0$.
\end{itemize}
Then $Q$ is finitely generated over $\FF[\tau]$.
\end{lem}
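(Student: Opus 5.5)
Choose coordinates for $M_L$ as in Remark~\ref{rem:choice-of-basis}, so that $E_M(L_\infty\langle\tau\rangle) \simeq L_\infty\langle\tau\rangle^d$ carries the Gauss norm. The plan is to produce a finite-dimensional $\FF$-subspace $F$ of $E_M(L_\infty\langle\tau\rangle)$ whose image in $Q$, shifted by powers of $\tau$, generates $Q$. Two features of the right action of $\tau$ drive the argument. For $x = \sum_i a_i\tau^i$ we have $x\tau = \sum_i a_i \tau^{i+1}$, so $\Vert x\tau\Vert = \max |a_i| = \Vert x\Vert$ on each coordinate. Hence right multiplication by $\tau$ is an isometry, and the quotient seminorm satisfies $\Vert q\tau\Vert \le \Vert q\Vert$. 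Moreover, every element $x$ splits as $x = x_0 + y\tau$ with $x_0 \in L_\infty^d$ its constant term and $\Vert y\Vert \le \Vert x\Vert$.

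The key finite piece is $B := \{v \in L_\infty^d : \Vert v\Vert \le R'\}$ for a fixed $R' > R$. Its image in $Q$ is bounded. The next step is to show that the image of $B$ in $Q$ is a finite set. Since $L_\infty$ is locally compact, $B$ is compact, so it is covered by finitely many balls $v_j + \{\Vert w\Vert < r\}$. Two points of $B$ in the same ball differ by an element of norm $< r$, whose image in $Q$ has seminorm $< r$ and is therefore $0$ by discreteness. The image of $B$ in $Q$ is thus finite; call it $\{q_1,\dots,q_n\}$.

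Now take $q \in Q$ and a lift $x$ with $\Vert x\Vert \le R' $, which exists by boundedness. Write $x = x_0 + y\tau$ with $x_0 \in B$, so that $q = \bar x_0 + \bar y\tau$. Here $\bar y$ lies in $Q$ because the submodule is stable under $\tau$ and the projection commutes with right multiplication by $\tau$. Apply the same step to $\bar y$, choosing a new lift of norm $\le R'$, and iterate. After $N$ steps this gives
\[
q = \sum_{k<N} q_{j_k}\tau^k + \bar y_N \tau^N .
\]
The main obstacle is ensuring that the remainder vanishes, since nothing forces termination.

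To handle this, I will exploit convergence. The natural approach is to decompose each lift $x = \sum_k x_k\tau^k$ with $x_k\in L_\infty^d$ and $\Vert x_k\Vert \to 0$. For $k \ge k_0$ the tail $\sum_{k\ge k_0} x_k\tau^k$ has norm $< r$, so its image in $Q$ is $0$. Therefore $q = \sum_{k<k_0} \bar x_k\tau^k$, where each $x_k \in B$ and so $\bar x_k \in \{q_1,\dots,q_n\}$. This shows $q$ lies in the $\FF[\tau]$-span of $q_1,\dots,q_n$, since coefficients in $\{0\}\cup\{1\}$ suffice. The point to check carefully is that $\bar x_k$ makes sense: each $x_k$ is an element of $E_M(L_\infty\langle\tau\rangle)$ in its own right and lies in $B$, so its image is one of the $q_j$. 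No iteration is needed in the end, and $Q$ is generated over $\FF[\tau]$ by the finite set $\{q_1,\dots,q_n\}$.
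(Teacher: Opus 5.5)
Your proof is correct and is essentially the paper's argument unpacked elementwise. The paper notes that $B_{r'}\langle\tau\rangle$ dies in $Q$ by discreteness and that $B_{R'}\langle\tau\rangle$ surjects onto $Q$ by boundedness, so $Q$ is a quotient of $B_{R'}\langle\tau\rangle/B_{r'}\langle\tau\rangle\simeq(B_{R'}/B_{r'})[\tau]$ with $B_{R'}/B_{r'}$ finite; this is exactly your tail-vanishing step combined with your compactness argument showing the constant-term ball has finite image (the abandoned iteration in the middle can simply be deleted).
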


\begin{proof}
We choose a bicontinuous $\FF[\tau]$-linear isomorphism 
$E_M(L_\infty\langle \tau\rangle) \simeq L_\infty\langle \tau\rangle^d$
where the codomain is equipped with the standard sup norm.
Let $B_\rho \subset L_\infty^d$ denote the closed ball of radius~$\rho$.
Then $B_\rho\langle\tau\rangle$ is the closed ball of radius~$\rho$
in $L_\infty\langle\tau\rangle^d$. Besides, we observe that $B_\rho$
is a $\FF$-vector space and $B_\rho\langle\tau\rangle$ is a right
$\FF[\tau]$-module.
By discreteness of $Q$, there exists $r' > 0$ such that the ball $B_{r'}\langle \tau \rangle$
maps to $0$ in $Q$.
Therefore, for all $R' > r'$, there is a $\FF[\tau]$-linear map 
$(B_{R'}\langle\tau\rangle/ B_{r'}\langle\tau\rangle)^d \to Q$. 
By the boundedness assumption, this map is surjective for $R'$ large 
enough. Hence it is enough to prove that 
$(B_{R'}\langle\tau\rangle/B_{r'}\langle\tau\rangle)^d$ is finitely 
generated as a $\FF[\tau]$-module.
But, we have an identification
$B_{R'}\langle\tau\rangle/B_{r'}\langle\tau\rangle \simeq (B_{R'}/B_{r'})[\tau]$
and we know that the quotient $B_{R'}/B_{r'}$ has finite dimension over
$\FF$. The lemma follows.
\end{proof}

We apply Lemma~\ref{lem:fgtopology} with $Q = H_M^{(J)}$, which
is the quotient of $E_M(L_\infty\langle \tau\rangle)$ by the
submodule
$E_M(J[\tau]) + \exp_M(\Lie_M(L_\infty\langle \tau\rangle))$.
It follows from the fact that $\exp_M$ is uniformly locally an 
invertible isometry
(see Proposition~\ref{prop:expM}) that $H_M^{(J)}$ is discrete. On the
other hand, $H_M^{(J)}$ is bounded being a quotient of
$E_M(L_\infty/J \langle \tau\rangle) \simeq 
(L_\infty/J \langle \tau\rangle)^d$, which is itself
bounded.
We conclude that $H_M^{(J)}$ is finitely generated over $\FF[\tau]$.

The following generalizes \cite[Proposition~2.2]{ANT}.
\begin{prop}
\label{prop:HMJ}
The module $H_M^{(J)}$ is finite and the multiplication by $\tau$ 
acts bijectively on it. Moreover, we have an exact sequence
$$0 \longrightarrow 
  U_M^{(J)} \stackrel{\times\tau}{\longrightarrow}
  U_M^{(J)} \longrightarrow
  \Lie_M(J) \longrightarrow 0.$$
\end{prop}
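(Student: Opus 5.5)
The plan is to apply the snake lemma to right multiplication by $\tau$ on the fundamental four-term exact sequence
$$0 \to U_M^{(J)} \to \Lie_M(L_\infty\langle\tau\rangle) \xrightarrow{e_M^{(J)}} E_M(L_\infty/J\langle\tau\rangle) \to H_M^{(J)} \to 0.$$
The map $e_M^{(J)}$ is $A[\tau]$-linear, so $\times\tau$ gives an endomorphism of this sequence. First I record what $\times\tau$ does on the two middle terms.

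On $L_\infty\langle\tau\rangle$, right multiplication by $\tau$ is injective with cokernel $L_\infty$, on which $\tau$ acts by zero. On $L_\infty/J\langle\tau\rangle$ it is injective with cokernel $L_\infty/J$. Exactness of $E_M$ and $\Lie_M$, together with the lemma $E_M(X)=\Lie_M(X)$ when $\tau$ acts by zero, then gives:
\begin{itemize}
\item on $\Lie_M(L_\infty\langle\tau\rangle)$, $\times\tau$ is injective with cokernel $\Lie_M(L_\infty)$;
\item on $E_M(L_\infty/J\langle\tau\rangle)$, $\times\tau$ is injective with cokernel $E_M(L_\infty/J)=\Lie_M(L_\infty/J)$.
\end{itemize}

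Next I identify the map induced by $e_M^{(J)}$ on these cokernels. Writing $\exp_M=\sum_i e_i\tau^i$ with $e_0=\id$, reduction modulo $\tau$ (setting $\tau=0$ in the coefficient series) kills every term with $i\ge1$. So the induced map is the natural projection $\Lie_M(L_\infty)\to\Lie_M(L_\infty/J)$, which by exactness of $\Lie_M$ is surjective with kernel $\Lie_M(J)$. This compatibility needs a little care with the right/left actions, but it is formal.

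Now split the four-term sequence into two short exact sequences through $I=\mathrm{im}(e_M^{(J)})$, and apply the snake lemma to each.
\begin{itemize}
\item From $0\to I\to E_M(L_\infty/J\langle\tau\rangle)\to H\to 0$: $\ker(\tau|_I)=0$, so we get $0\to\ker(\tau|_H)\to \coker(\tau|_I)\to \Lie_M(L_\infty/J)\to\coker(\tau|_H)\to 0$.
\item From $0\to U\to\Lie_M(L_\infty\langle\tau\rangle)\to I\to0$: $\tau$ is injective on $U$, and we get $0\to\ker(\tau|_I)=0\to \coker(\tau|_U)\to\Lie_M(L_\infty)\to\coker(\tau|_I)\to0$.
\end{itemize}
Since the composite $\Lie_M(L_\infty)\to\coker(\tau|_I)\to\Lie_M(L_\infty/J)$ is the surjective projection, the second map $\coker(\tau|_I)\to\Lie_M(L_\infty/J)$ is surjective, hence $\coker(\tau|_H)=0$. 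Moreover, $\Lie_M(L_\infty)\to\coker(\tau|_I)$ is surjective. Chasing kernels then gives $\ker(\tau|_H)\cong \Lie_M(J)/\coker(\tau|_U)$, and $\coker(\tau|_U)$ injects into $\Lie_M(J)$.

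The remaining task, and the main obstacle, is to show that $\tau$ is injective on $H=H_M^{(J)}$. Once that holds, $\coker(\tau|_U)\cong\Lie_M(J)$, which is the desired exact sequence.

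For injectivity I would use finiteness. We already know $H$ is finitely generated over $\FF[\tau]$, and $\tau$ is surjective on $H$. I would first show that $H$ is torsion, hence finite-dimensional over $\FF$. One route is that $H$ is killed by a nonzero element of $\FF[\tau]$. Alternatively, I would exploit the discreteness and boundedness from Lemma~\ref{lem:fgtopology} via the norm-contracting effect of $\tau$: for $\Vert x\Vert<1$ we have $\Vert x\tau^n\Vert\to0$, so, using discreteness, sufficiently high powers of $\tau$ send the image of the unit ball to $0$. In particular $\tau$ is nilpotent on the submodule generated by small elements.

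More cleanly: a finitely generated $\FF[\tau]$-module on which $\tau$ is surjective has no free part, since on $\FF[\tau]^r$ the map $\tau$ is not surjective. So $H$ is finite-dimensional over $\FF$, and a surjective endomorphism of a finite-dimensional space is bijective. This gives both finiteness of $H_M^{(J)}$ and bijectivity of $\tau$ at once, and the exact sequence follows.
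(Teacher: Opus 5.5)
Your proof is correct and follows the paper's argument: the paper applies the snake lemma once to $\exp_M$ viewed as a map between the two $\times\tau$ short exact sequences, obtaining $0\to U_M^{(J)}\xrightarrow{\tau}U_M^{(J)}\to\Lie_M(J)\to H_M^{(J)}\xrightarrow{\tau}H_M^{(J)}\to 0$ directly, whereas you extract the same information by splitting the four-term sequence through the image of $e_M^{(J)}$ and applying the snake lemma twice to $\times\tau$; the finiteness step (no free $\FF[\tau]$-summand because $\tau$ is surjective on $H_M^{(J)}$, hence finite, hence bijective) is the same in both. You should drop the tentative ``norm-contracting'' aside, which is false as stated because right multiplication by $\tau$ is an isometry for the Gauss norm $\Vert\cdot\Vert_1$, but your final argument does not rely on it.
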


\begin{proof}
We consider the following diagram with exact rows:
$$\begin{tikzcd}
  0 \ar[r]
& \Lie_M\big(L_\infty\langle\tau\rangle\big) \ar[r, "\times\tau"] \ar[d,"\exp_M"]
& \Lie_M\big(L_\infty\langle\tau\rangle\big) \ar[r] \ar[d,"\exp_M"]
& \Lie_M\big(L_\infty\big) \ar[r] \ar[d,"\alpha"]
& 0 \\
  0 \ar[r]
& E_M\big(L_\infty/J\langle\tau\rangle\big) \ar[r, "\times\tau"]
& E_M\big(L_\infty/J\langle\tau\rangle\big) \ar[r]
& E_M\big(L_\infty/J\big) \ar[r]
& 0
\end{tikzcd}$$
where $\alpha$ is the induced map on cokernels.
We underline that, in the above diagram, $L_\infty/J$ is identified 
with the cokernel of the multiplication by $\tau$ on 
$L_\infty/J\langle\tau\rangle$. In particular, $\tau$ acts by zero
on it and so, we have $E_M(L_\infty/J) = \Lie_M(L_\infty/J)$.
Using now the fact that
$\exp_M$ is the identity modulo~$\tau$, we conclude that $\alpha$
is nothing but the map induced by the canonical projection
$L_\infty \to L_\infty/J$. Thus $\alpha$ is surjective and its
kernel is $\Lie_M(J)$.

Applying the snake lemma, we then get the long exact sequence
$$0 \longrightarrow 
  U_M^{(J)} \stackrel{\times\tau}{\longrightarrow}
  U_M^{(J)} \longrightarrow
  \Lie_M(J) \longrightarrow
  H_M^{(J)} \stackrel{\times\tau}{\longrightarrow}
  H_M^{(J)} \longrightarrow 0.$$
In particular, the multiplication by $\tau$ acts surjectively
on $H_M^{(J)}$.
Since the latter is finitely generated over $\FF[\tau]$, we know
moreover that it is isomorphic to a direct sum of copies of 
$\FF[\tau]$ or finite quotients of it.
Since the multiplication by $\tau$ on $\FF[\tau]$ is obviously
nonsurjective, we conclude that $H_M^{(J)}$ cannot have any 
summand of this form. Therefore, it is finite, from what we
deduce that $\tau$ acts bijectively on it. The exactness of 
the sequence
$$0 \longrightarrow 
  U_M^{(J)} \stackrel{\times\tau}{\longrightarrow}
  U_M^{(J)} \longrightarrow
  \Lie_M(J) \longrightarrow 0$$
finally follows.
\end{proof}

\subsection{Projectivity of the module of units}
In this subsection, we prove that modules of units are projective over $A[\tau]$. It generalizes \cite[Proposition~1]{AT} which shows this after localization with respect to $S=\mathbb{F}[\tau]\setminus \{0\}$. This is specifically harder in our situation as $A[\tau]$ has Krull dimension $2$. 

We start by recalling classical facts about the ambient space
$\Lie_M(L_\infty\langle\tau\rangle)$ in which $U_M^{(J)}$
lives. First of all, we notice that
$$\Lie_M(L_\infty\langle\tau\rangle) \simeq
\Lie_M(L_\infty)\langle\tau\rangle$$
where the latter is defined as the set of series $\sum_i a_i \tau^i$
with $a_i \in \Lie_M(L_\infty)$ and $\lim_{i \to \infty} \vert a_i 
\vert = 0$.
Moreover, it is well known (see, \emph{e.g.}, \cite[Lemma 3.5]{ferraro}) that the action of $A$
on $\Lie_M(L_\infty)$ uniquely extends to a continuous action of $K_\infty$
turning $\Lie_M(L_\infty)$ into a finite dimensional $K_\infty$-vector
space. We deduce that $\Lie_M(L_\infty\langle\tau\rangle)$
is a free module over $K_\infty\langle\tau\rangle$.

The aim of this subsection is to prove the following theorem.

\begin{thm}
\label{thm:UMJ}
The $A[\tau]$-module $U_M^{(J)}$ is projective of finite rank.
Moreover, the canonical map
$$\iota_M^{(J)} :
 K_\infty\langle\tau\rangle \otimes_{A[\tau]} U_M^{(J)} 
 \longrightarrow \Lie_M\big(L_\infty\langle\tau\rangle\big)$$
is an isomorphism.
\end{thm}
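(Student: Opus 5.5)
Our plan is to first establish finite generation and the isomorphism after a suitable localization, and then upgrade to projectivity over $A[\tau]$ using the exact sequence of Proposition~\ref{prop:HMJ}.

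\textbf{Step 1: discreteness and a cocompact lattice.} Write $V := \Lie_M(L_\infty\langle\tau\rangle)$, a free $K_\infty\langle\tau\rangle$-module of some rank $n$. By Proposition~\ref{prop:expM}, $\exp_M$ is a local isometry near $0$. Since $E_M(J[\tau])$ is discrete in $E_M(L_\infty\langle\tau\rangle)$, the module $U_M^{(J)}$ is discrete in $V$: some ball $B_\varepsilon\langle\tau\rangle$ of $V$ meets it only in $0$.

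Reduce modulo $\tau$. Proposition~\ref{prop:HMJ} gives $U/\tau U \simeq \Lie_M(J)$, which is a lattice (finitely generated, full rank over $A$) in the $K_\infty$-space $\Lie_M(L_\infty)$.

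Now lift an $A$-basis or generating family $u_1,\dots,u_m$ of $\Lie_M(J)$ to elements $\tilde u_i \in U$, and let $U'$ be the $A[\tau]$-module they generate. By a $\tau$-adic successive approximation, $U = U' + \tau U$, and hence $U = U' + \tau^k U$ for all $k$.

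To get $U = U'$, combine this with discreteness. If $x \in U$ has all coefficients of $\tau^i$ for $i<k$ controlled, we must show the remainder lies in $U'$. The natural way is to use completeness in the $\tau$-adic direction together with the fact that $\|\tau^k y\|$ does not shrink. This is the delicate point, since the Gauss norm of $\tau y$ equals that of $y$ up to $q$-powers. Rather than Nakayama over a non-local ring, I would argue via finite generation over $\FF[\tau]$-localization. Since $\tau$ acts injectively on $U$ with quotient $\Lie_M(J)$, finite over $A$, a graded/Nakayama argument over $A[\tau]$ localized at $(\tau)$, plus discreteness, should show $U$ is finitely generated with $\tau$-torsion-free generic behaviour. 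Concretely, I would show that $K_\infty\langle\tau\rangle\otimes U'\to V$ is surjective modulo $\tau$ (the $u_i$ span $\Lie_M(L_\infty)$ over $K_\infty$). Then, by $\tau$-adic completeness of $V$ with respect to coefficient decay, it is surjective. Injectivity follows by comparing ranks: $U'$ has rank $\le n$ because its image mod $\tau$ has rank $n$ over $A$.

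\textbf{Step 2: finite generation and the isomorphism $\iota_M^{(J)}$.} Having $U'$ finitely generated with $U=U'+\tau U$ and $U$ discrete, I would show $U/U'$ is zero. The module $U/U'$ is $\tau$-divisible. It embeds into the discrete quotient $V/(K_\infty\langle\tau\rangle$-hull$)$, and an argument parallel to Lemma~\ref{lem:fgtopology} and Proposition~\ref{prop:HMJ} (bounded plus discrete implies finitely generated over $\FF[\tau]$, and then $\tau$-divisibility forces finiteness) kills it. Injectivity of $\iota$ then follows from a rank count modulo $\tau$, using that a basis of $\Lie_M(J)$ maps to a $K_\infty$-basis of $\Lie_M(L_\infty)$ when $J$ is a full lattice.

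\textbf{Step 3: projectivity.} This is the main obstacle, since $A[\tau]$ has dimension $2$. The plan is as follows. $U$ is finitely generated and torsion-free, being inside the torsion-free module $V$ via $\iota$. Projectivity over the regular $2$-dimensional ring $A[\tau]$ is equivalent to reflexivity, and equivalently to depth $2$ at every maximal ideal. At maximal ideals $\mathfrak m$ containing $\tau$, the element $\tau$ is a nonzerodivisor on $U$ with $U/\tau U\simeq\Lie_M(J)$ projective over $A=A[\tau]/\tau$, hence of depth $1$; so $U_{\mathfrak m}$ has depth $2$ and is free. At maximal ideals not containing $\tau$, I would use a similar argument with an element $a\otimes 1$ or with $\tau-c$. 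Alternatively, I would show $U = \bigcap_{\mathfrak p \text{ height }1} U_{\mathfrak p}$ directly from discreteness: an element of $V$ that lies in $U$ after multiplication by two coprime elements is in $U$, because $\exp_M(x)\in E_M(L_\infty\langle\tau\rangle)$ and $\exp_M(fx)$, $\exp_M(gx)$ integral with $(f,g)=1$ forces $\exp_M(x)$ integral. This last idea, integrality descending along a coprime pair, is what I would try first. Finally, finite rank is immediate from $\iota$.
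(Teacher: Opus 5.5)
\textbf{There is a genuine gap in Steps~1--2: they do not establish that $U:=U_M^{(J)}$ is finitely generated, nor that $\iota_M^{(J)}$ is surjective.}

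Your key deduction is that the $K_\infty\langle\tau\rangle$-span $\mathcal U'$ of $U'$ is all of $V$ because this holds modulo $\tau$. This is false. The element $\tau$ is not topologically nilpotent for the Gauss norm ($\Vert x\tau\Vert=\Vert x\Vert$). So $V$ is not $\tau$-adically complete: its $\tau$-adic completion is $\Lie_M(L_\infty)[\![\tau]\!]$. For instance, $1+\tau\equiv 1 \pmod \tau$ is not a unit of $K_\infty\langle\tau\rangle$, since $\sum_i(-1)^i\tau^i$ diverges.

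Concretely, whatever lifts $\tilde u_i$ you choose, the elements $\tilde u_i(1+\tau)$ lift the same generators of $U/U\tau\simeq\Lie_M(J)$. Yet their $K_\infty\langle\tau\rangle$-span lies in $V(1+\tau)\subsetneq V$. So the surjectivity claim is false for this choice of lifts, and by the theorem itself so is $U=U'$.

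Step~2 inherits the problem:
\begin{itemize}
\item The map $U/U'\to V/\mathcal U'$ is not injective; its kernel is $(U\cap\mathcal U')/U'$.
\item The module that is bounded is $\mathcal U'/U'$. It contains $U/U'$ only if $U\subset\mathcal U'$, which is exactly what is unproven.
\item Even a finite $\tau$-divisible module need not vanish, e.g.\ $A[\tau]/(\mathfrak m,1+\tau)$ for a maximal ideal $\mathfrak m$ of $A$.
\end{itemize}
Since Step~3 presupposes that $U$ is finitely generated, the argument is incomplete at its foundation.

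\textbf{The missing idea} is to choose $U'$ differently and to actually use that $V/U$ is bounded, since it embeds into $E_M(L_\infty/J\langle\tau\rangle)$. You announce this as ``cocompactness'' but never exploit it.
\begin{itemize}
\item \emph{Finite generation.} The paper takes $x_1,\dots,x_m\in U$ generating the $K_\infty\langle\tau\rangle$-span $\mathcal U$ of all of $U$; this is possible because $K_\infty\langle\tau\rangle$ is noetherian. Let $U'$ be their $A[\tau]$-span. Then $U/U'\subset\mathcal U/U'$ is bounded, because $K_\infty\langle\tau\rangle/A[\tau]=K_\infty/A\langle\tau\rangle$ is, and it is discrete. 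Hence it is finitely generated over $\FF[\tau]$ as in Lemma~\ref{lem:fgtopology}; no vanishing is needed.
\item \emph{Surjectivity of $\iota_M^{(J)}$.} Its cokernel is a $K_\infty$-vector space that is a quotient of the bounded module $V/U$, hence zero.
\item \emph{Injectivity.} This follows from projectivity plus the rank count modulo $\tau$ via Proposition~\ref{prop:HMJ}.
\end{itemize}

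\textbf{Your Step~3, once finite generation is fixed,} is a legitimate alternative to the paper's Tor-based Proposition~\ref{prop:flatness}.
\begin{itemize}
\item The depth argument at maximal ideals containing $\tau$ is correct.
\item The reflexivity argument works if ``coprime'' means that $I=\{f : fx\in U\}$ has height $2$, not $I=(1)$. Then $A[\tau]/I$ is finite, so $I$ contains some $0\neq a\in A$ and some $0\neq h\in\FF[\tau]$. The element $a$ is a unit in $K_\infty\langle\tau\rangle$, which forces $x\in V$. Then $x\in U$ follows because $V/U$ has no $\FF[\tau]$-torsion, which is the same input the paper uses.
\item Finitely generated reflexive modules over the regular two-dimensional ring $A[\tau]$ are projective, so this even makes your depth computation unnecessary.
\end{itemize}
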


We start by showing its finiteness.
\begin{lem}
\label{lem:UMJfg}
The $A[\tau]$-module $U_M^{(J)}$ is finitely generated.
\end{lem}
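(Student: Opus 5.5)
The plan is to use the exact sequence of Proposition~\ref{prop:HMJ},
$$0 \longrightarrow U_M^{(J)} \stackrel{\times\tau}{\longrightarrow} U_M^{(J)} \longrightarrow \Lie_M(J) \longrightarrow 0,$$
together with a completeness argument in the $\tau$-adic direction. It is a graded Nakayama-type argument. Since $\Lie(M)$ is a finitely generated projective $R$-module and $J$ is finitely generated over $A$, the module $\Lie_M(J)=\Hom_R(\Lie(M),J)$ is finitely generated over $A$. Hence $U_M^{(J)}/\tau U_M^{(J)} \simeq \Lie_M(J)$ is finitely generated over $A = A[\tau]/(\tau)$. I choose $u_1,\dots,u_n \in U_M^{(J)}$ lifting a finite set of $A$-generators of $\Lie_M(J)$, and set $N := \sum_i u_i A[\tau] \subset U_M^{(J)}$. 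By construction $U_M^{(J)} = N + U_M^{(J)}\tau$, and by iteration $U_M^{(J)} = N + U_M^{(J)}\tau^k$ for every $k\geq 0$.

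The main step is to show that this forces $U_M^{(J)} = N$. For that I will prove that $U_M^{(J)}$ is discrete and that $\tau^k$ is eventually contracting on it in a suitable sense. Concretely, take $x \in U_M^{(J)}$. For each $k$, write $x = n_k + y_k\tau^k$ with $n_k \in N$ and $y_k \in U_M^{(J)}$.

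The obstacle is that right multiplication by $\tau$ on $\Lie_M(L_\infty\langle\tau\rangle)\simeq \Lie_M(L_\infty)\langle\tau\rangle$ is just a shift, so it is isometric and does not make $y_k\tau^k$ small. The better route is coefficientwise. An element of $\Lie_M(L_\infty)\langle\tau\rangle$ lying in $\tau^k$ times the module has its first $k$ coefficients equal to zero. So $x - n_k$ has vanishing coefficients in degrees $<k$, and the sequence $(n_k)$ converges to $x$ in the $\tau$-adic topology. To conclude $x\in N$, I would exploit that $U_M^{(J)}$ is discrete in the Gauss norm. Indeed, by Proposition~\ref{prop:expM}, an element $u$ with $\Vert u\Vert<\varepsilon$ and $\exp_M(u)\in E_M(J[\tau])$ must satisfy $\Vert\exp_M(u)\Vert=\Vert u\Vert<\varepsilon$. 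Since $J[\tau]$ is discrete in $L_\infty\langle\tau\rangle$ (because $J$ is discrete in $L_\infty$), this gives $u=0$ when $\varepsilon$ is small.

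Next I would combine discreteness with a boundedness statement. I want to choose the representatives $n_k$ so that the $y_k$ remain of bounded norm; then $x-n_k=y_k\tau^k$ stays bounded. Alternatively, and more cleanly, I can work in the ambient finite free $K_\infty\langle\tau\rangle$-module. Consider the $A$-lattice $\Lambda$ in the finite-dimensional $K_\infty$-space $\Lie_M(L_\infty)$ obtained as the image of $U_M^{(J)}$ in the constant coefficient; this image is $\Lie_M(J)$. Write $x=\sum_i x_i\tau^i$ and reduce degree by degree: at step $k$, the leading coefficient of $y_k$ lies in $\Lie_M(J)$, and I subtract an $A$-combination of the $u_i\tau^k$. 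Discreteness then shows that the corrections must terminate. Otherwise $x-n_k$ would be a nonzero unit with all coefficients below degree $k$ vanishing and with norm bounded by a bound on the $u_i$ and on $x$. Because the coefficients tend to $0$, such elements have norm tending to $0$ as $k$ grows, contradicting discreteness. Securing this uniform norm bound on the corrections, which amounts to bounding the $A$-coefficients in terms of $\Vert y_k\Vert$ via a fundamental domain for $\Lie_M(J)$ in $\Lie_M(L_\infty)$, is the step I expect to be delicate. Once it is in place, $x\in N$, so $U_M^{(J)}$ is finitely generated.
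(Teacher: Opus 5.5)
Your plan has a genuine gap, and it is more serious than the ``delicate'' norm bound you flag. The central claim, that $U_M^{(J)}=N+U_M^{(J)}\tau$ forces $U_M^{(J)}=N$, is false for arbitrary lifts, so no estimate can rescue it. Nakayama's lemma in the $\tau$-direction is unavailable here: $A[\tau]$ is not $\tau$-adically complete and $1-\tau\notin A[\tau]^\times$. If $u_1,\dots,u_n$ are admissible lifts, then so are $u_1(1-\tau),\dots,u_n(1-\tau)$, whose span is $N(1-\tau)$.

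Concretely, take the Carlitz module over $R=A=\FF[t]$. One knows a posteriori that $U_M=uA[\tau]$ is free of rank one, with $u$ lifting a generator of $\Lie_M(A)\simeq A$. Then $u_1:=u(1-\tau)$ is also such a lift, and $U_M=u_1A[\tau]+U_M\tau$, yet $u\notin u_1A[\tau]$. Running your degree-by-degree reduction on $x=u$ gives $u=u_1(1+\tau+\dots+\tau^{k-1})+u\tau^k$. So $y_k=u$ is uniformly bounded, exactly as you wanted, and still $x-n_k=u\tau^k$ has norm $\Vert u\Vert$ for every $k$.

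The step ``because the coefficients tend to $0$, such elements have norm tending to $0$'' is where the argument breaks. A bounded family of series whose first $k$ coefficients vanish need not become small, precisely because right multiplication by $\tau$ is an isometry, as you observed yourself. The $n_k$ do converge $\tau$-adically to $x$, but the limit escapes $N$.

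The missing idea is an $\infty$-adic input: $A[\tau]$ is ``cocompact'' in $K_\infty\langle\tau\rangle$. The paper argues as follows.
\begin{itemize}
\item Since $K_\infty\langle\tau\rangle$ is noetherian, it picks finitely many $x_1,\dots,x_m\in U_M^{(J)}$ generating the $K_\infty\langle\tau\rangle$-span of $U_M^{(J)}$, and sets $U=\sum_i x_iA[\tau]$.
\item It observes that $U_M^{(J)}/U$ is bounded, because $K_\infty\langle\tau\rangle/A[\tau]\simeq(K_\infty/A)\langle\tau\rangle$ is bounded.
\item Together with the discreteness of $U_M^{(J)}$ (which you establish correctly, as in the paper), the argument of Lemma~\ref{lem:fgtopology} shows that $U_M^{(J)}/U$ is finitely generated over $\FF[\tau]$.
\end{itemize}
Note that the conclusion is finite generation of this quotient, not its vanishing. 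Trying to prove that the quotient by a span of lifts is zero is exactly what cannot work.
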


\begin{proof}
Let $\mathcal U$ be the $K_\infty\langle\tau\rangle$-span of $U_M^{(J)}$
in $\Lie_M(L_\infty\langle\tau\rangle)$.
From the fact that $K_\infty\langle\tau\rangle$ is a noetherian ring, 
we deduce that $\mathcal U$ is finitely generated over $K_\infty\langle\tau\rangle$.
Let $x_1, \ldots, x_m \in U_M^{(J)}$ be such a generating family and
let $U$ be their $A[\tau]$-span.

We claim that, if $\varepsilon$ is small
enough, there is no nonzero $x \in U_M^{(J)}$ with $\Vert x \Vert \leq
\varepsilon$. Indeed, if such an $x$ existed, we would deduce that
$\exp_M(x) \in E_M(J[\tau])$ and $\Vert \exp_M(x) \Vert \leq 
\varepsilon$ by Proposition~\ref{prop:expM}; this is a contradiction
when $\varepsilon$ is small.

The space $U_M^{(J)}$ is then discrete and so is the quotient $U_M^{(J)}/U$.
Moreover, the latter is bounded since $K_\infty\langle\tau\rangle / A[\tau]
= K_\infty/A \langle\tau\rangle$ is also.
By an analogue of Lemma~\ref{lem:fgtopology}, we 
conclude that $U_M^{(J)} / U$ is finitely generated over $\FF[\tau]$ 
and hence \emph{a fortiori} over $A[\tau]$. The lemma follows.
\end{proof}

We continue with a general result about flatness in commutative algebra.

\begin{prop}
\label{prop:flatness}
Let $L$ be a flat $A[\tau]$-module and let $U$ be a $A[\tau]$-submodule
such that $L/U$ has no $\FF[\tau]$-torsion. Then $U$ is $A[\tau]$-flat.
\end{prop}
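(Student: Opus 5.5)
The plan is to reduce flatness of $U$ to a bound on the flat dimension of $P := L/U$. From the short exact sequence $0 \to U \to L \to P \to 0$ and the flatness of $L$, the long exact $\mathrm{Tor}$-sequence gives, for every $A[\tau]$-module $N$,
\[
0 = \mathrm{Tor}_2^{A[\tau]}(N,L) \to \mathrm{Tor}_2^{A[\tau]}(N,P) \to \mathrm{Tor}_1^{A[\tau]}(N,U) \to \mathrm{Tor}_1^{A[\tau]}(N,L) = 0 .
\]
So it suffices to show that $P$ has flat dimension at most $1$ over $A[\tau] = A \otimes \FF[\tau]$. The only input is that $P$ has no $\FF[\tau]$-torsion, hence is flat over the principal ideal domain $\FF[\tau]$.

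To bound the flat dimension, I will resolve $P$ in the $A$-variable rather than the $\tau$-variable. Let $I \subset A \otimes A$ be the kernel of the multiplication map $A \otimes A \to A$. Consider $X := A \otimes_{\FF} P = A[\tau] \otimes_{\FF[\tau]} P$, viewed as a module over $A \otimes A[\tau]$: the first copy of $A$ acts on the left factor, while the second copy of $A$ and $\tau$ act on $P$. We have the surjection $X \to P$, $a \otimes x \mapsto ax$, whose kernel is $IX$.

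The candidate resolution is
\[
0 \longrightarrow I \otimes_{A\otimes A} X \longrightarrow X \longrightarrow P \longrightarrow 0 ,
\]
where the $A[\tau]$-structure is taken through the first copy of $A$. I will check the following three points.

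\begin{itemize}
\item[(a)] \emph{$X$ is $A[\tau]$-flat.} This holds because $X = A[\tau] \otimes_{\FF[\tau]} P$ is the base change of a flat $\FF[\tau]$-module.
\item[(b)] \emph{$I$ is a projective $(A\otimes A)$-module.} The diagonal of the smooth affine surface $\mathrm{Spec}(A \otimes A)$ is a Cartier divisor, so $I$ is an invertible ideal. It follows that $I \otimes_{A\otimes A} X$ is a direct summand of some $X^n$, and is therefore $A[\tau]$-flat as well.
\item[(c)] \emph{The left map is injective.} Its kernel is $\mathrm{Tor}_1^{A\otimes A}(A, X)$. Since $X = (A \otimes A) \otimes_{A} P$, with $A$ embedded as the second factor, and $A\otimes A$ is flat over that $A$, flat base change gives $\mathrm{Tor}_1^{A\otimes A}(A,X) \cong \mathrm{Tor}_1^{A}(A,P)$. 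Here $A$ is regarded as a module over the second copy of $A$, so it is free and this group vanishes.
\end{itemize}

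I expect the main obstacle to be this step: setting up the correct module structures so that the resolution is exact and genuinely $A[\tau]$-linear. In particular, the factor through which $A[\tau]$ acts must differ from the factor used in the base-change computation of (c).

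Once (a)--(c) are established, $P$ has a flat resolution of length $1$, so $\mathrm{Tor}_2^{A[\tau]}(N,P) = 0$ for every $N$. The displayed sequence then forces $\mathrm{Tor}_1^{A[\tau]}(N,U) = 0$ for every $N$, and hence $U$ is $A[\tau]$-flat.
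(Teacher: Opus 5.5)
Your proof is correct, and it takes a genuinely different route from the paper's. Both arguments establish that $P=L/U$ satisfies $\mathrm{Tor}_2^{A[\tau]}(-,P)=0$, and both then conclude via the long exact $\mathrm{Tor}$ sequence of $0\to U\to L\to P\to 0$. What differs is how that vanishing is obtained.

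The paper uses the ideal criterion for flatness and a d\'evissage on $T=\mathrm{Tor}_1^{A[\tau]}(I,P)$. It shows that $T$ has no $\FF[\tau]$-torsion, using that $A[\tau]$ is regular of dimension $2$ so the relevant $\mathrm{Tor}_3$ vanishes. It then shows that $T$ dies after inverting $S=\FF[\tau]\setminus\{0\}$, because $A[\tau][S^{-1}]=A\otimes\FF(\tau)$ is Dedekind.

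You instead write down an explicit length-one flat resolution coming from the diagonal,
\[
0\to I\otimes_{A\otimes A}(A\otimes P)\to A\otimes P\to P\to 0,
\]
and your checks (a)--(c) all go through:
\begin{itemize}
\item $A\otimes P=A[\tau]\otimes_{\FF[\tau]}P$ is $A[\tau]$-flat, since $P$ is torsion-free, hence flat, over the PID $\FF[\tau]$.
\item The diagonal ideal is invertible because $A$ is smooth of dimension one over $\FF$. This holds since $C$ is smooth, and it stays true even when $A\otimes A$ fails to be a domain (nontrivial constant field).
\item Flat base change along the second factor, which is free, correctly identifies the kernel with $\mathrm{Tor}_1^A(A,P)=0$, while the $A[\tau]$-linearity runs through the first factor.
\end{itemize}

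Your approach trades the global-dimension and localization inputs for the smoothness of $\mathrm{Spec}\,A$ and the tensor decomposition $A[\tau]=A\otimes\FF[\tau]$. It is more direct and yields the cleaner intermediate statement that every $\FF[\tau]$-flat $A[\tau]$-module has flat dimension at most $1$. It also extends verbatim to $A\otimes B$ for any $\FF$-algebra $B$, giving $\mathrm{fd}_{A\otimes B}P\le 1+\mathrm{fd}_B P$. The paper's argument only invokes ring-theoretic properties of $A[\tau]$ and of its localization at $S$, and follows the template of \cite{gazda}.
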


\begin{proof}
We follow the strategy used in \cite[Proposition 4.33]{gazda}.
Let $I\subset A[\tau]$ be a nonzero ideal. By \href{https://stacks.math.columbia.edu/tag/00HD}{[Stack~Project:~00HD]},
it is enough to show that the multiplication map $\mu:I\otimes_{A[\tau]} U\to U$ is injective. The latter fits into a commutative square
\[
\begin{tikzcd}
I\otimes_{A[\tau]} U \arrow[r,"\mu"]\arrow[d,"\rho"] & U\arrow[d] \\
I\otimes_{A[\tau]} L \arrow[r] & L
\end{tikzcd}
\]
where the lower horizontal map and the right-most vertical map are injective because $L$ is flat over $A[\tau]$.
In particular, $\mu$ is injective if, and only if $\rho$ is.
To show that $\rho$ is injective, it is enough to show that the $A[\tau]$-module $T:=\mathrm{Tor}^{A[\tau]}_1(I,L/U)$ vanishes (see \href{https://stacks.math.columbia.edu/tag/00M0}{[Stack~Project:~00M0]}). Write $S=\mathbb{F}[\tau]\setminus \{0\}$ as above; we show it by proving that
\begin{enumerate}
\item\label{item:torsionfree} $T$ is $\FF[\tau]$-torsion free,
\item\label{item:torsion} that $T[S^{-1}]=0$.
\end{enumerate} 
To prove \eqref{item:torsionfree}, let $a\in S$ and define $C$ as the cokernel of the multiplication by $a$ on $L/U$.
Since $L/U$ has no $\FF[\tau]$-torsion, the sequence $0\to L/U\stackrel{\times a}{\to} L/U\to C\to 0$ is exact.
Hence, tensoring it by $I$ produces a long exact sequence which includes
\[
\mathrm{Tor}_{2}^{A[\tau]}(I,C)\longrightarrow T\xrightarrow{\times a} T.
\]
To show that the $\mathrm{Tor}_{2}$ vanishes, we tensor the sequence $0\to I\to A[\tau]\to A[\tau]/I\to 0$ along $C$, obtaining the exact sequence
\[
\mathrm{Tor}_{3}^{A[\tau]}(A[\tau]/I,C)\longrightarrow \mathrm{Tor}_{2}^{A[\tau]}(I,C) \longrightarrow \mathrm{Tor}_{2}^{A[\tau]}(A[\tau],C).
\]
Both extremal terms vanish, the former because $A[\tau]$ is a regular noetherian domain of Krull dimension $2$, and the latter because $A[\tau]$ is flat over itself. Hence $T$ is $a$-torsion free.\\
To prove \eqref{item:torsion}, we use that the $\mathrm{Tor}$-functor commutes with localization in both variables:
\[
S^{-1}T\cong \mathrm{Tor}^{A[\tau][S^{-1}]}_1(I\otimes_{\mathbb{F}[\tau]}\mathbb{F}(\tau),(L/U)\otimes_{\mathbb{F}[\tau]}\mathbb{F}(\tau)).
\]
Now $I\otimes_{\mathbb{F}[\tau]}\mathbb{F}(\tau)$ is an ideal in the Dedekind domain $A[\tau][S^{-1}]$, hence is flat, and thus the functor $\mathrm{Tor}^{A[\tau][S^{-1}]}_1(I\otimes_{\mathbb{F}[\tau]}\mathbb{F}(\tau),-)$ is zero. Thus $S^{-1}T$ vanishes, as desired.
\end{proof}

\begin{proof}[Proof of Theorem~\ref{thm:UMJ}]
We recall that $\Lie_M(L_\infty\langle\tau\rangle)$ is free over $K_\infty\langle\tau\rangle$. Hence, it is flat over this ring.
Besides, the maps $A[\tau]\to K[\tau]$ and $K[\tau]\to K_{\infty}\langle \tau \rangle$ are both flat, the former being a localization and the latter being the $\infty$-adic completion of a noetherian domain.
Thus $K_{\infty}\langle \tau \rangle$ is flat over $A[\tau]$ and we conclude that $\Lie_M(L_\infty\langle\tau\rangle)$ is $A[\tau]$-flat as well.

On the other hand, we observe that the quotient $\Lie_M(L_\infty\langle\tau\rangle) / U_M^{(J)}$ embeds into $E_M(L_\infty/J\langle\tau\rangle)$.
Since the latter has no $\FF[\tau]$-torsion, the same holds for the former.
We can then apply Proposition~\ref{prop:flatness} with $L = \Lie_M(L_\infty\langle\tau\rangle)$ and $U = U_M^{(J)}$ and deduce that
$U_M^{(J)}$ is $A[\tau]$-flat. Since it is also finitely generated (Lemma~\ref{lem:UMJfg}),
we conclude that it is projective by \href{https://stacks.math.columbia.edu/tag/00NX}{[Stack~Project:~00NX]}.

It remains to prove that the map $\iota_M^{(J)}$ is an isomorphism.
After Proposition~\ref{prop:HMJ}, we know that
$$K_\infty \otimes_{A[\tau]} U_M^{(J)} 
\simeq K_\infty \otimes_A \Lie_M(J) 
\simeq \Lie_M(L_\infty)
\simeq K_\infty \otimes_{K_\infty\langle\tau\rangle} 
  \Lie_M\big(L_\infty\langle\tau\rangle\big)$$
where, in the leftmost and rightmost tensor products, the base change
map is induced by $\tau \mapsto 0$.
We derive that the domain and the codomain of $\iota_M^{(J)}$ have the same rank.
Therefore, it is enough to prove that $\iota_M^{(J)}$ is surjective.
For this, we notice that the cokernel of $\iota_M^{(J)}$ is naturally a quotient
of $\Lie_M(L_\infty\langle\tau\rangle) / U_M^{(J)}$ which itself embeds
into $E_M(L_\infty/J\langle\tau\rangle)$. Since the latter is bounded, 
so is $\coker \iota_M^{(J)}$.
Given that $\coker \iota_M^{(J)}$ is a $K_\infty$-vector space, this can only
happen if it vanishes, that is, if $\iota_M^{(J)}$ is surjective.
\end{proof}

\section{The class formula}
\label{sec:classformula}

By Theorem~\ref{thm:UMJ}, $U_M^{(J)}$ appears as an $A[\tau]$-\emph{lattice}
inside $\Lie_M(L_\infty\langle\tau\rangle)$.
The aim of the class formula is to compare it with the other lattice $\Lie_M(J[\tau])$.
The comparison holds at the level of determinants. Let $d$ be the \emph{dimension} of $M$,
that is by definition the $A$-rank of $\Lie_M(R)$ or, equivalently, the
$K$-dimension of $\Lie_M(L)$. In what follows, the notation $\det$ refers
to the $d$-th exterior power.

\begin{thm}
\label{thm:classformula}
For all nonzero ideal $J$ of $R$, we have an equality of $A[\tau]$-lines
$$\mathrm{det}_{A[\tau]} \big(U_M^{(J)}\big) = 
L^{(J)}(M; \tau) \cdot \mathrm{det}_{A[\tau]} \big(\Lie_M(J[\tau])\big)$$
inside $\det_{K_\infty\langle\tau\rangle} \big(\Lie_M(L_\infty\langle\tau\rangle)\big)$.
\end{thm}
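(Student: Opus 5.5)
We follow Taelman's strategy, adapted to the variable $\tau$. Set $\Lambda := \Lie_M(J[\tau])$ and $U := U_M^{(J)}$. Both are projective $A[\tau]$-lattices of rank $d$ in $V := \Lie_M(L_\infty\langle\tau\rangle)$: the first obviously, the second by Theorem~\ref{thm:UMJ}. Hence there is a well-defined element $[U:\Lambda] \in K_\infty\langle\tau\rangle^\times$ with $\det U = [U:\Lambda]\cdot\det\Lambda$, defined up to $A[\tau]^\times=\FF^\times$. We will normalize it by its constant term. Reducing modulo $\tau$ via Proposition~\ref{prop:HMJ}, both lattices reduce to $\Lie_M(J)$, so the constant term of $[U:\Lambda]$ is $1$ after normalization. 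The goal is then to show $[U:\Lambda] = L^{(J)}(M;\tau)$.

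\textbf{Step 1: an Euler product for $[U:\Lambda]$ as a limit.} For a finite set $S$ of primes not dividing $J$, let $J_S := J\prod_{\p\in S}\p$. Then $U_M^{(J_S)}\subset U$, and the snake lemma applied to the fundamental exact sequences for $J$ and $J_S$ gives an exact sequence
\[0\to U/U_M^{(J_S)} \to E_M(J/J_S[\tau]) \to H_M^{(J_S)} \to H_M^{(J)}\to 0,\]
where $E_M(J/J_S[\tau]) \cong \bigoplus_{\p\in S} E_M(\Fp[\tau])$. All these modules are finite over $\FF$ except the middle one. Tensoring with $K$ over $A$ kills $H$, since $H$ is finite and hence $A$-torsion. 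Taking Fitting ideals over $K[\tau]$ and comparing determinants, we get $[U^{(J_S)}:U]=\prod_{\p\in S} z_\p(M)$. Indeed, the torsion module $U/U^{(J_S)}$ has Fitting ideal $\det U^{(J_S)}/\det U$ over $K[\tau]$. On the other side, $[\Lie_M(J_S[\tau]):\Lie_M(J[\tau])]$ is a constant in $K^\times$ (the Fitting ideal of the finite module $\Lie_M(J/J_S)[\tau]$ is generated by an element of $A$), hence a unit in the line over $K[\tau]$. Normalizing constant terms, we obtain
\[[U^{(J_S)}:\Lie_M(J_S[\tau])] = [U:\Lambda]\cdot\prod_{\p\in S}z_\p(M)\]
up to constants, and in fact exactly once everything is normalized mod~$\tau$.

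\textbf{Step 2: the limit.} It remains to show that $[U^{(J_S)}:\Lie_M(J_S[\tau])]\to 1$ in $K_\infty\langle\tau\rangle$ as $S$ grows over all primes ordered by degree. The plan is to compare both lattices through $\exp_M$. Choose $S$ so that $J_S$ is highly divisible; since $\Lie_M(J_S[\tau])$ lands in a small ball when measured appropriately, we instead rescale. Choose $a\in A$, and use the ratio on the $\tau$-truncation of degree $<n$. Modulo $\tau^n$, everything becomes commutative linear algebra over $A$ of finite rank. There, Taelman's argument applies: $\exp_M$ restricted to $\Lie_M(J_S[\tau]/\tau^n)$ is an isometry onto a neighborhood, and for $J_S$ with $|J_S|$ small in the $\infty$-adic sense, $U^{(J_S)}$ is approximated by $\log_M E_M(J_S[\tau])$. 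This uses Proposition~\ref{prop:expphi}(ii) and the fact that $\log_M$ converges on a ball that eventually contains the relevant elements. It yields $[U^{(J_S)}:\Lie_M(J_S[\tau])]\equiv 1 \bmod \tau^n$ up to an $\infty$-adically small error. Combined with Proposition~\ref{prop:eulerval} (uniform convergence of the Euler product in $K_\infty\langle\tau\rangle$), this gives the equality.

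\textbf{Main obstacle.} The hard step is Step~2. Elements of $J_S$ are never $\infty$-adically small: $J_S$ is discrete in $L_\infty$. So one cannot directly apply $\log_M$ to $E_M(J_S[\tau])$. I would first try the trick of ANT/Taelman: the ratio equals a determinant of $\log_M$ on a finite quotient of the form $E_M(\prod_{\p\in S}\Fp[\tau])$ versus $\Lie_M$ of the same. Equivalently, one interprets $\prod_{\p\in S} z_\p$ via the trace formula $\det(1-\tau\mid\ldots)$ and shows the product converges to the ratio through a nuclear-operator (Anderson trace formula) argument on $E_M(L_\infty/J\langle\tau\rangle)$. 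This requires controlling the $\tau$-action as a contracting operator on $E_M(\mathfrak{m}_\infty\langle\tau\rangle)$.
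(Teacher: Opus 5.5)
\textbf{There is a genuine gap: Step~2 is the whole theorem, and your sketch for it does not go through.}

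Your Step~1 is correct. It is essentially the paper's Lemmas~\ref{lem:UMexact} and~\ref{lem:lambdaJ}. One small omission: for $[U:\Lambda]$ to exist you need $\det U\cong\det\Lambda$ as $A[\tau]$-modules. This follows from $U/U\tau\cong\Lie_M(J)$ together with Quillen--Suslin, or simply $\mathrm{Pic}(A[\tau])=\mathrm{Pic}(A)$.

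Given Step~1 and the convergence of $\prod_{\p\in S}z_\p(M)$ to $L^{(J)}(M;\tau)^{-1}$ in $K_\infty\langle\tau\rangle$, the assertion $[U^{(J_S)}:\Lie_M(J_S[\tau])]\to 1$ is \emph{equivalent} to the class formula. So the reduction gains nothing, and your argument for it fails:
\begin{itemize}
\item As you note yourself, $J_S$ is discrete in $L_\infty$, so there is no regime in which $\log_M$ can be applied to $E_M(J_S[\tau])$.
\item No truncation modulo $\tau^n$ can produce ``$\equiv 1 \bmod \tau^n$ up to an $\infty$-adically small error''. What a truncation actually gives is that the first $n$ coefficients are \emph{integral} (they lie in $A$), not that they are close to those of $1$.
\item The trace-formula fallback is a different and much heavier machine, which you do not carry out.
\end{itemize}

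\textbf{The missing idea is to avoid $\infty$-adic convergence altogether and let integrality plus rigidity do the work.}

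First, in coordinates $\exp_M=1+e_1\tau+\cdots$ and $\log_M=1+\ell_1\tau+\cdots$ have coefficients in $M_d(L)$. Let $J_n$ be an ideal clearing the denominators of $e_1,\dots,e_n,\ell_1,\dots,\ell_n$; here ``small'' means divisible, not $\infty$-adically small. For $J\subset J_n$, $\exp_M$ restricts to a bijection $\Lie_M(J[\tau]/\tau^n)\to E_M(J[\tau]/\tau^n)$. The snake lemma then gives $U_M^{(J)}\subset\Lie_M(J[\tau])+\Lie_M(L_\infty\langle\tau\rangle)\tau^n$, hence $\lambda^{(J)}\in A[\tau]+K_\infty\langle\tau\rangle\tau^n$ (Lemma~\ref{lem:Jn}).

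Second, extend your Step~1 to arbitrary $J'\subset J$. This is needed because $J_n$ may involve prime powers; use that $K\otimes_A E_M(\p^m/\p^{m+1}[\tau])=0$, since $\tau$ acts by zero there. It then shows that $\mu:=\lambda^{(J)}L^{(J)}(M;\tau)^{-1}$ is independent of $J$. Choose $J\subset J_n$ also contained in every prime of degree $<n$. Then $L^{(J)}(M;\tau)\equiv 1\bmod\tau^n$ by Proposition~\ref{prop:eulertaun}, so $\mu\in A[\tau]+K_\infty\langle\tau\rangle\tau^n$ for every $n$.

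Third, conclude by rigidity:
\begin{itemize}
\item Every coefficient of $\mu$ lies in $A$. Since the coefficients tend to $0$ and $A$ is discrete in $K_\infty$, $\mu\in A[\tau]$.
\item A unit of $K_\infty\langle\tau\rangle$ with constant term $1$ has all other coefficients of absolute value $<1$. For elements of $A$ this forces them to vanish.
\end{itemize}
Hence $\mu=1$, i.e.\ $\lambda^{(J)}=L^{(J)}(M;\tau)$. This switch from the $\infty$-adic to the $\tau$-adic topology, via $(1+K_\infty\langle\tau\rangle\tau)\cap K_\infty\langle\tau\rangle^\times\cap A[\tau]=\{1\}$, is what your proposal lacks.
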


\subsection{Proof of Theorem~\ref{thm:classformula}}

The first ingredient is the famous Quillen--Suslin theorem~\cite{quillen}
which, in our case of interest, has the following consequence: every finitely 
generated projective module over $A[\tau]$ comes by scalar extension from a unique
finitely generated projective module over $A$. In particular, two such modules $U$
and $V$ are isomorphic if and only if $U/U \tau$ and $V/V \tau$ are isomorphic as
$A$-modules.

In our situation, we know from Proposition~\ref{prop:HMJ} that
$U_M^{(J)}/U_M^{(J)}\tau \simeq \Lie_M(J)$. Hence we conclude that
$U_M^{(J)}$ and $\Lie_M(J[\tau])$ are abstractly isomorphic as $A[\tau]$-module.
Let $\alpha^{(J)} : \Lie_M(J[\tau]) \to U_M^{(J)}$ be such an isomorphism,
chosen in such a way that $\alpha^{(J)} \bmod \tau$ is the identity.
Extending scalars to $K_\infty\langle\tau\rangle$, we find that $\alpha^{(J)}$
induces an automorphism of $\Lie_M(L_\infty\langle\tau\rangle)$.
Writing $\lambda^{(J)}$ for its determinant, we have 
$\lambda^{(J)} \in K_\infty\langle\tau\rangle^\times$,
$\lambda^{(J)} \equiv 1 \pmod \tau$ and
$$\mathrm{det}_{A[\tau]} \big(U_M^{(J)}\big) = 
\lambda^{(J)} \cdot \mathrm{det}_{A[\tau]} \big(\Lie_M(J[\tau])\big).$$
Again, when $J = R$, we simply write $\lambda$ for $\lambda^{(R)}$.

\begin{lem}
\label{lem:UMexact}
For all nonzero ideal $J$ of $R$, we have an exact sequence
$$0 \longrightarrow 
  K \otimes_A U_M^{(J)} \longrightarrow
  K \otimes_A U_M \longrightarrow
  K \otimes_A E_M\big(R/J[\tau]\big) \longrightarrow 0.$$
\end{lem}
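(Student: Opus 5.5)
Since $E_M(J[\tau]) \subset E_M(R[\tau])$, we have $U_M^{(J)} \subset U_M$, and we compare the two fundamental exact sequences. The inclusion $J \subset R$ yields a commutative diagram whose top row is the identity of $\Lie_M(L_\infty\langle\tau\rangle)$ and whose bottom row is the projection
$$E_M\big(L_\infty/J\langle\tau\rangle\big) \longrightarrow E_M\big(L_\infty/R\langle\tau\rangle\big).$$
By exactness of $E_M$ applied to $0 \to R/J[\tau] \to L_\infty/J\langle\tau\rangle \to L_\infty/R\langle\tau\rangle \to 0$, the kernel of this projection is $E_M(R/J[\tau])$. Viewing the exact sequences $0 \to U \to \Lie_M(L_\infty\langle\tau\rangle) \to E_M(\cdots) \to H \to 0$ as a morphism of complexes and applying the snake lemma, we get an exact sequence of $A[\tau]$-modules
$$0 \longrightarrow U_M^{(J)} \longrightarrow U_M \stackrel{\exp_M}{\longrightarrow} E_M\big(R/J[\tau]\big) \longrightarrow H_M^{(J)} \longrightarrow H_M \longrightarrow 0 .$$
The map $U_M \to E_M(R/J[\tau])$ is simply $x \mapsto \exp_M(x) \bmod J$.

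Next we tensor this sequence over $A$ with $K$, which is flat over $A$. By Proposition~\ref{prop:HMJ}, $H_M^{(J)}$ is finite, so it is killed by some nonzero element of $A$. Indeed, a finite $A$-module is torsion: each element generates a finite $\FF$-vector space, so its annihilator is a nonzero ideal of $A$. Hence $K \otimes_A H_M^{(J)} = 0$. The localized sequence therefore truncates to the desired short exact sequence
$$0 \to K \otimes_A U_M^{(J)} \to K \otimes_A U_M \to K \otimes_A E_M(R/J[\tau]) \to 0 .$$

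The only potentially delicate step is the snake-lemma identification. We must make sure the connecting map and the kernel of the right-hand vertical map are identified as claimed, namely that
$$E_M\big(L_\infty/J\langle\tau\rangle\big) \to E_M\big(L_\infty/R\langle\tau\rangle\big)$$
is surjective with kernel $E_M(R/J[\tau])$. Both properties follow from exactness of $E_M$, since $\tau$-adic Tate series of quotients are computed coefficientwise and $R/J$ is discrete in $L_\infty/J$. Everything else is formal, and the finiteness of $H_M^{(J)}$, the key input, is already established.
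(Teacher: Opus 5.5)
Your proposal is correct and follows the paper's proof essentially verbatim. The paper uses the same snake-lemma diagram: the identity on $\Lie_M(L_\infty\langle\tau\rangle)$ over the projection $E_M(L_\infty/J\langle\tau\rangle)\to E_M(L_\infty/R\langle\tau\rangle)$, whose kernel is $E_M(R/J[\tau])$. This gives the six-term exact sequence, which is then tensored with the flat $A$-module $K$, and $K\otimes_A H_M^{(J)}=0$ holds because $H_M^{(J)}$ is finite.
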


\begin{proof}
We apply the snake lemma to the following diagram with exact rows:
$$\begin{tikzcd}
  0 \ar[r]
& 0 \ar[r] \ar[d] 
& \Lie_M\big(L_\infty\langle\tau\rangle\big) \ar[r, "\id"] \ar[d,"\exp_M"]
& \Lie_M\big(L_\infty\langle\tau\rangle\big) \ar[r] \ar[d,"\exp_M"]
& 0 \\
  0 \ar[r]
& E_M\big(R/J[\tau]\big) \ar[r]
& E_M\big(L_\infty/J\langle\tau\rangle\big) \ar[r]
& E_M\big(L_\infty/R\langle\tau\rangle\big) \ar[r]
& 0
\end{tikzcd}$$
and get the long exact sequence
$$0 \longrightarrow 
  U_M^{(J)} \longrightarrow
  U_M \longrightarrow
  E_M\big(R/J[\tau]\big) \longrightarrow 
  H_M^{(J)} \longrightarrow
  H_M \longrightarrow 0.$$
The lemma follows after tensoring by $K$ (which is flat over $A$) and
remembering that $H_M^{(J)}$ is finite, implying that $K \otimes_A
H_M^{(J)} = 0$.
\end{proof}

\begin{lem}
\label{lem:lambdaJ}
For all nonzero ideal $J$ of $R$, we have the equality
$$\frac{\lambda^{(J)}}{\lambda} = \frac{L^{(J)}(M;\tau)}{L(M;\tau)}.$$
\end{lem}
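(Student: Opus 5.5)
We compare the two lattices $U_M^{(J)} \subset U_M$ and $\Lie_M(J[\tau]) \subset \Lie_M(R[\tau])$ after extending scalars to the principal ideal domain $K[\tau]$. There, both inclusions have torsion cokernels, and their indices are measured by Fitting ideals. The strategy is to show that the two indices differ exactly by the Euler factors at the primes dividing $J$.

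\textbf{Step 1: index of the units.} By Lemma~\ref{lem:UMexact}, $K \otimes_A U_M^{(J)} \subset K \otimes_A U_M$ is an inclusion of free $K[\tau]$-modules of the same rank $d$. Its cokernel is $K \otimes_A E_M(R/J[\tau])$, which is finite-dimensional over $K$ by the argument of Lemma~\ref{lem:ZpFitt}. Hence $\det(K\otimes U_M^{(J)}) = f \cdot \det(K \otimes U_M)$ inside $\det(K\otimes U_M)$, where $f$ is a generator of
\[
\Fitt_{K[\tau]}\big(K\otimes_A E_M(R/J[\tau])\big).
\]

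\textbf{Step 2: computing this Fitting ideal.} Write $J = \prod \p^{n_\p}$. By the Chinese remainder theorem and the exactness of $E_M$, we have $E_M(R/J[\tau]) = \bigoplus_\p E_M(R/\p^{n_\p}[\tau])$. For each $\p$, filter $R/\p^{n_\p}$ by the $R$-submodules $\p^k/\p^{k+1}$. These are left $R[\tau]$-submodules, since $\tau x = x^q$ preserves them. Each graded piece is isomorphic to $\Fp[\tau]$ as an $R[\tau]$-module, because $\p^k/\p^{k+1}$ is an $\Fp$-line. I expect this to be the main obstacle: the twisted action $\tau x = x^q$ on $\p^k/\p^{k+1}$ is not the Frobenius of $\Fp$ in an obvious way (for $k\ge1$ it is in fact zero, as $x^q \in \p^{kq}$).

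So I would instead filter using $R/\p^{n}[\tau]$ as a module via the sequence
\[
0 \to \p^{k}/\p^{n}[\tau] \to R/\p^n[\tau] \to R/\p^k[\tau] \to 0,
\]
where $\tau$ acts through the Ore structure on $S[\tau]$ (left multiplication). This is the relevant structure, since $E_M(J[\tau])$ uses the $R[\tau]$-module $J[\tau]$. Under left multiplication, $\p^k[\tau]/\p^{k+1}[\tau] \cong (\p^k/\p^{k+1})\otimes_{\Fp}\Fp[\tau] \cong \Fp[\tau]$ as a left $R[\tau]$-module, after choosing a generator of the line. Fitting ideals over the PID $K[\tau]$ are multiplicative in short exact sequences of torsion modules, and $E_M$ is exact. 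Hence $f$ is the product of $Z_\p(M)^{n_\p}$. Since $\lambda^{(J)}\equiv 1$ is normalized, $f = \prod_{\p\mid J} z_\p(M)^{n_\p}$. Hmm, this gives exponent $n_\p$, whereas $L^{(J)}/L = \prod_{\p \mid J} z_\p(M)$ has exponent $1$. I therefore expect the correct comparison to include the $\Lie$ side.

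\textbf{Step 3: the $\Lie$ side and conclusion.} The inclusion $\Lie_M(J[\tau]) \subset \Lie_M(R[\tau])$ has cokernel $\Lie_M(R/J)[\tau]$. Its Fitting ideal over $K[\tau]$ (after $K\otimes_A$) is generated by the constant $\Fitt_K$-type element, namely $\Fitt_{A}(\Lie_M(R/J))$, which becomes a unit in $K$. Working over $K$, this index is trivial. Combining the two steps in $\det_{K_\infty\langle\tau\rangle}$ via the definitions of $\lambda^{(J)}$ and $\lambda$ gives $\lambda^{(J)}/\lambda = c\,f^{-1}$ with $c\in K^\times$. Normalizing modulo $\tau$ (both $\lambda$'s are $\equiv 1$) forces $c=1$.

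To resolve the exponent issue, I would recheck Step 2. I expect that $E_M(\p^k/\p^{k+1}[\tau])$ only contributes $z_\p$ for $k=0$ and a trivial factor for $k\ge1$. The reason is that $E_M(X)$ for $X = (\p^k/\p^{k+1})[\tau]$ with $k\ge1$ carries a twisted $R$-action, since $\tau$ sends $\p^k$ into $\p^{kq}$ rather than merely into $\p^k$. The main work is thus a careful identification of the graded pieces as $R[\tau]$-modules. This should yield $f=\prod_{\p\mid J} z_\p(M)^{-1}\cdot(\dots)$ matching $L^{(J)}/L$, and hence the stated equality $\lambda^{(J)}/\lambda = L^{(J)}(M;\tau)/L(M;\tau)$.
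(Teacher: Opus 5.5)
Your Step~1 (taking determinants in the exact sequence of Lemma~\ref{lem:UMexact}) is the paper's starting point. So is your remark that the $\Lie$ side is harmless after $K\otimes_A$, since $K\otimes_A \Lie_M(J[\tau]) = \Lie_M(L[\tau]) = K\otimes_A \Lie_M(R[\tau])$. The gap is in Step~2, which is the heart of the lemma, and you never close it.

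You claim that, for the Ore left multiplication, $\p^k[\tau]/\p^{k+1}[\tau]\cong\Fp[\tau]$ as left $R[\tau]$-modules. This is false for $k\geq 1$. On that quotient $\tau\cdot x\tau^i = x^q\tau^{i+1}$ with $x^q\in\p^{kq}\subset\p^{k+1}$, so left multiplication by $\tau$ is \emph{zero}. This is exactly the phenomenon you noticed for $\p^k/\p^{k+1}$, and passing to $[\tau]$ does not remove it. That false claim is what produced the wrong exponent $n_\p$. Your last paragraph then only \emph{expects} the pieces with $k\geq 1$ to contribute trivially. The justification you give (a ``twisted $R$-action'') is not the mechanism at work, so the key step is left unproved.

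The missing argument is short. Set $X_k := \p^k/\p^{k+1}[\tau]$ for $k\geq 1$. Since $\tau$ acts by zero on $X_k$, we get $E_M(X_k)=\Lie_M(X_k)\simeq \Lie_M(\p^k/\p^{k+1})[\tau]$. Here $\Lie_M(\p^k/\p^{k+1})$ is a finite, hence torsion, $A$-module, so $K\otimes_A E_M(X_k)=0$. By exactness of $E_M$ and flatness of $K$ over $A$, the projections then induce isomorphisms
\[
K\otimes_A E_M\big(R/\p^{n+1}[\tau]\big)\simeq K\otimes_A E_M\big(R/\p^{n}[\tau]\big).
\]
Hence $K\otimes_A E_M(R/\p^{n_\p}[\tau])\simeq K\otimes_A E_M(\Fp[\tau])$, whose Fitting ideal is $Z_\p(M)$ with exponent $1$, as required.

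There is also a direction slip in your bookkeeping. Since $U_M^{(J)}\subset U_M$, Step~1 gives $\lambda^{(J)}/\lambda = c\,f$, not $c\,f^{-1}$ as you write in Step~3. So what must be shown is $f=\prod_{\p\mid J} z_\p(M)=L^{(J)}(M;\tau)/L(M;\tau)$. Your closing expression in terms of the $z_\p(M)^{-1}$ cannot hold, since $f$ generates an ideal of $K[\tau]$.
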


\begin{proof}
Taking determinants in the exact sequence of Lemma~\ref{lem:UMexact},
we obtain
\begin{equation}
\label{eq:lambdaJ:1}
\lambda^{(J)} = \lambda \cdot 
\Fitt_{K[\tau]}\big(K \otimes_A E_M(R/J[\tau])\big).
\end{equation}
Let $J = \p_1^{n_1} \cdots \p_s^{n_s}$ be the decomposition of $J$ in
a product of primes. Here, the $\p_i$ are then pairwise distinct prime
ideals of $R$ and the $n_i$ are positive integers. 
By the chinese remainder theorem, $R/J$ is isomorphic to the product
of the $R/\p_i^{n_i}$ and thus
\begin{equation}
\label{eq:lambdaJ:2}
E_M\big(R/J[\tau]\big) \simeq
  E_M\big(R/\p_1^{n_1}[\tau]\big) \oplus
  E_M\big(R/\p_2^{n_2}[\tau]\big) \oplus \cdots \oplus
  E_M\big(R/\p_s^{n_s}[\tau]\big)
\end{equation}
as $A[\tau]$-modules.
We now claim that, for any prime ideal $\p$ and any integer $n \geq 1$,
the canonical projection $R/\p^{n+1} \to R/\p^n$ induces an isomorphism
$$K \otimes_A E_M\big(R/\p^{n+1}[\tau]\big) \longrightarrow
  K \otimes_A E_M\big(R/\p^n[\tau]\big).$$
Indeed, by exactness of $E_M$ and flatness of $K$ over $A$, the above map
is surjective and its kernel is 
$K \otimes_A E_M(\p^n/\p^{n+1}[\tau])$.
Observing that the left multiplication by $\tau$ is $0$ on
$\p^n/\p^{n+1}[\tau]$, we deduce further that
$$K \otimes_A E_M\big(\p^n/\p^{n+1}[\tau]\big) \simeq
  K \otimes_A \Lie_M\big(\p^n/\p^{n+1}[\tau]\big) \simeq
  K \otimes_A \Lie_M\big(\p^n/\p^{n+1}\big)[\tau]$$
and the latter vanishes given that $\Lie_M(\p^n/\p^{n+1})$
is killed by a power of $\p$.
This proves the claim, which in turn implies by induction that
\begin{equation}
\label{eq:lambdaJ:3}
K \otimes_A E_M\big(R/\p_i^{n_i}[\tau]\big) \simeq 
  K \otimes_A E_M\big(R/\p_i[\tau]\big)
\end{equation}
for all $i$. Putting together Equations~\eqref{eq:lambdaJ:1},
\eqref{eq:lambdaJ:2} and~\eqref{eq:lambdaJ:3}, we obtain
$$\lambda^{(J)} = \lambda \cdot
  \prod_{i=1}^s \Fitt_{K[\tau]}\big(K \otimes_A E_M\big(R/\p_i[\tau]\big)\big).$$
The lemma follows after remembering the definition of the $L$-series.
\end{proof}

\begin{lem}
\label{lem:Jn}
For all positive integer $n$, there exists an ideal $J_n$ of $R$
such that
$$U_M^{(J)} \subset 
  \Lie_M\big(J[\tau]\big) + 
  \Lie_M\big(L_\infty\langle\tau\rangle\big) \tau^n$$
for all $J \subset J_n$.
\end{lem}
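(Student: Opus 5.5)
The idea is to invert the exponential modulo $\tau^n$. Take $x \in U_M^{(J)}$ and put $y := \exp_M(x) \in E_M(J[\tau])$. Since $\exp_M$ and $\log_M$ are mutually inverse on formal series (\S\ref{ssec:logexp}), we have $x = \log_M(y)$ in $\Lie_M(L_\infty[\![\tau]\!])$. Modulo $\tau^n$ only finitely many coefficients of $\log_M$ matter. So the class of $x$ in $\Lie_M(L_\infty\langle\tau\rangle)/\Lie_M(L_\infty\langle\tau\rangle)\tau^n$ equals $\log_M^{<n}(y)$, where $\log_M^{<n} = \sum_{i<n}\ell_i\tau^i$ is the truncated logarithm. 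Its coefficients $\ell_i$ lie in $M_d(L)$ after choosing coordinates (Remark~\ref{rem:choice-of-basis}), and $\ell_0 = \id$. It therefore suffices to find $J_n$ such that $\log_M^{<n}$ maps $E_M(J[\tau])$ into $\Lie_M(J[\tau])$ modulo $\tau^n$ whenever $J\subset J_n$. Notably, no convergence input is needed beyond the formal identity $\log_M\circ\exp_M=\id$.

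To make this concrete, I would choose the coordinates integrally, taking a basis $m_1,\dots,m_d$ of $M_L$ consisting of elements of $M$. Evaluation at the $m_i$ then sends $E_M(J[\tau])$ into $J[\tau]^d$ for every $J$. Similarly, since $\Lie(M)$ is finitely generated projective over $R$ and the images $\bar m_i$ generate $L\otimes_R\Lie(M)$, there is a nonzero $c\in R$, independent of $J$, such that every element with coordinates in $cJ[\tau]^d$ lies in $\Lie_M(J[\tau])=J\cdot\Lie_M(R[\tau])$. Next, let $D\in R$ be a nonzero common denominator of $\ell_1,\dots,\ell_{n-1}$. For $y=\sum_j y_j\tau^j$ with $y_j\in J^d$, the $k$-th coefficient of $\log_M^{<n}(y)$ is
\[
y_k + \sum_{1\le i\le k}\ell_i\,y_{k-i}^{(q^i)}, \qquad k<n.
\]
For $i\ge1$, each term has entries in $D^{-1}J^{q^i}\subset D^{-1}J^2$. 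So if $J\subset (c D)$, these terms lie in $cJ$ and hence in $\Lie_M(J)$.

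The main obstacle is the $i=0$ term $y_k$. Its coordinates lie only in $J$, not $cJ$, and the identification of $E_M$ with $\Lie_M$ through coordinates is not compatible with the two integral structures. To handle it, I would avoid comparing $y_k$ directly. Instead I would use the functorial exact sequences: $E_M(J[\tau])$ is filtered by the $E_M(J\tau^k[\tau])$, with graded pieces that are modules on which $\tau$ acts by zero, so they equal $\Lie_M$ of them by the first lemma of \S\ref{sec:motives}. The reduction $\tau\mapsto0$ of $y$ is then canonically an element of $\Lie_M(J)$, exactly as in the proof of Proposition~\ref{prop:HMJ} where the map $\alpha$ was identified.

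Concretely, the plan is to prove by induction on $k<n$ that $x$ is congruent modulo $\tau^{k+1}$ to an element of $\Lie_M(J[\tau])$. At step $k$, I subtract the already-found element of $\Lie_M(J[\tau])$ from $x$. I then apply $\exp_M$, using that $\exp_M$ maps $\Lie_M(J[\tau])$ into $E_M(J[\tau])$ up to the finitely many denominators of $e_1,\dots,e_{n-1}$; these are again absorbed by requiring $J\subset (c D D')$ for a suitable $D'$. The leading coefficient then becomes the reduction of an element of $E_M(J\tau^k[\tau])$, which lies canonically in $\Lie_M(J)\tau^k$. Taking $J_n := (cDD')$, or a suitable power of it to absorb the products from the nonlinear Frobenius twists, would then give the lemma.
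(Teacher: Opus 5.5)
\textbf{There is a genuine gap.} It sits exactly where you locate the main obstacle, and your inductive workaround moves it rather than removing it.

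The induction step needs
\[
\exp_M\big(\Lie_M(J[\tau])\big)\subset E_M(J[\tau])+E_M(L_\infty\langle\tau\rangle)\tau^n ,
\]
and you justify this by the denominators of $e_1,\dots,e_{n-1}$. Those denominators only control the terms $e_i\tau^i z$ with $i\ge 1$. The $i=0$ term of $\exp_M(z)$ is $z$ itself, read in $E_M$-coordinates. Deciding whether an element of $\Lie_M(J[\tau])$, whose coordinates lie in $J[\tau]^d$, gives an element of $E_M(J[\tau])$ is precisely the compatibility of integral structures you said fails for $\log_M$. Your graded-piece argument is correct: both sides are filtered with graded pieces $\Lie_M(J)\tau^k$, and $\exp_M,\log_M$ induce the identity on these. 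But that argument shows the $\exp$-inclusion and the $\log$-inclusion modulo $\tau^n$ are equivalent. So the induction trades one unproved inclusion for an equivalent one, and is circular.

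\textbf{The missing idea.} The two integral structures do agree in your coordinates once $J$ is small, because of the Frobenius twist.
\begin{itemize}
\item Choose generators $w_1,\dots,w_s$ of $M$ over $R[\tau]$ and write $w_j=\sum_i A_{ji}m_i$ with $A_{ji}=\sum_{p\ge 0}a_{ji,p}\tau^p\in L[\tau]$.
\item Then $\bar w_j=\sum_i a_{ji,0}\bar m_i$, and the $\bar w_j$ generate $\Lie(M)$ over $R$.
\item If $f\in E_M(L[\tau])$ and $g\in\Lie_M(L[\tau])$ have the same coordinates $v=(v_i)_i$, then
\[
f(w_j)=g(\bar w_j)+\sum_i\sum_{p\ge 1}a_{ji,p}\,\tau^p v_i .
\]
\item When $v\in J[\tau]^d$, the correction terms have coefficients of the form $a_{ji,p}x^{q^p}$ with $x\in J$. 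These lie in $J$ as soon as $J\subset(\delta)$, where $\delta\in R$ is a common denominator of the $a_{ji,p}$ with $p\ge 1$.
\end{itemize}
Hence for such $J$, $f\in E_M(J[\tau])$ if and only if $g\in\Lie_M(J[\tau])$.

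In particular, the element of $\Lie_M(L[\tau])$ with the same coordinates as $y=\exp_M(x)$ lies in $\Lie_M(J[\tau])$, so your term $y_k$ is harmless. Your first two paragraphs then finish the proof with $J_n=(\delta c D)$, and no induction is needed.

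\textbf{Comparison with the paper.} This comparison is what the paper leaves implicit when it takes a common denominator of the $e_i,\ell_i$ in chosen coordinates, so your concern is legitimate. Apart from it, your direct use of $\log_M\circ\exp_M=\id$ truncated modulo $\tau^n$ is the same key step as in the paper. The paper packages it differently: it shows that $\exp_M$ induces a bijection $\Lie_M(J[\tau]/\tau^n)\to E_M(J[\tau]/\tau^n)$ and concludes with the snake lemma. Your route only needs the $\log$ direction.
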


\begin{proof}
After choosing coordinates, the exponential and logarithm maps are given
by the left multiplication by series of the form
$$1 + e_1 \tau + e_2 \tau^2 + \cdots \in M_d(L)[\![\tau]\!] 
\quad \text{and} \quad
1 + \ell_1 \tau + \ell_2 \tau^2 + \cdots \in M_d(L)[\![\tau]\!]$$
respectively.
We define $J_n$ as the ideal generated by a common denominator of the
entries of $e_1, \ldots, e_n, \ell_1, \ldots, \ell_n$. For $J \subset J_n$, 
the map $\exp_M$ takes $\Lie_M(J[\tau]/\tau^n)$
to $E_M(J[\tau]/\tau^n)$ and, conversely, $\log_M$ takes
$E_M(J[\tau]/\tau^n)$ to $\Lie_M(J[\tau]/\tau^n)$. It
follows that $\exp_M$ induces a bijection between
$\Lie_M(J[\tau]/\tau^n)$ and $E_M(J[\tau]/\tau^n)$.

We now consider the following diagram with exact rows:
$$\begin{tikzcd}
  0 \ar[r]
& \Lie_M\big(L_\infty\langle\tau\rangle\big) \ar[r, "\times\tau^n"] \ar[d,"\exp_M"]
& \Lie_M\big(L_\infty\langle\tau\rangle\big) \ar[r] \ar[d,"\exp_M"]
& \Lie_M\big(L_\infty[\tau]/\tau^n\big) \ar[r] \ar[d,"\alpha"]
& 0 \\
  0 \ar[r]
& E_M\big(L_\infty/J\langle\tau\rangle\big) \ar[r, "\times\tau^n"]
& E_M\big(L_\infty/J\langle\tau\rangle\big) \ar[r]
& E_M\big((L_\infty/J)[\tau]/\tau^n\big) \ar[r]
& 0
\end{tikzcd}$$
where $\alpha$ is the induced map on cokernels. By the first part of the
proof, the kernel of $\alpha$ is $\Lie_M(J[\tau]/\tau^n)$. Therefore,
by the snake lemma, we have an exact sequence
$$0 \longrightarrow 
  U_M^{(J)} \stackrel{\tau^n}\longrightarrow
  U_M^{(J)} \longrightarrow
  \Lie_M\big(J[\tau]/\tau^n\big) \longrightarrow
  H_M^{(J)}.$$
Hence $U_M^{(J)} \subset \Lie_M(J[\tau]) + U_M^{(J)} \tau^n$,
which implies the lemma.
\end{proof}

\begin{proof}[Proof of Theorem~\ref{thm:classformula}]
After Lemma~\ref{lem:lambdaJ}, it is enough to prove Theorem~\ref{thm:classformula} for $J = R$.
We set $\mu := \lambda \cdot {L(M;\tau)}^{-1} \in K_\infty\langle\tau\rangle^\times$
and aim at proving that $\mu = 1$.
Let $n$ be a positive integer, and let $J$ be an ideal included in the ideal
$J_n$ of Lemma~\ref{lem:Jn}.
From Lemma~\ref{lem:Jn}, we deduce that
$$\mathrm{det}_{A[\tau]} \big(U_M^{(J)}\big) \subset 
  \mathrm{det}_{A[\tau]} \big(\Lie_M(J[\tau])\big) + 
  \mathrm{det}_{K_\infty\langle\tau\rangle} \big(\Lie_M(L_\infty\langle\tau\rangle)\big) \tau^n.$$
Hence $\lambda^{(J)} \in A[\tau] + K_\infty\langle\tau\rangle \tau^n$ which,
after Lemma~\ref{lem:lambdaJ}, can be rewritten as
$$\mu \cdot L^{(J)}(M;\tau) \in A[\tau] + K_\infty\langle\tau\rangle \tau^n.$$
If we now assume in addition that $J$ is contained in all ideals of $R$ of
degree at most $n$, we further know by Proposition~\ref{prop:eulertaun}
that $L^{(J)}(M;\tau) \equiv 1 \pmod{\tau^n}$ and therefore simply get
$\mu \in A[\tau] + K_\infty\langle\tau\rangle \tau^n$.

Since this conclusion holds for any $n$, we obtain $\mu \in A[\tau]$.
The fact that $\mu$ is invertible in $K_\infty\langle\tau\rangle$ and that
it is congruent to $1$ modulo $\tau$ finally ensures that $\mu = 1$ as desired.
\end{proof}

\subsection{A nondeterminantal version of the class formula}
\label{ssec:nondet}

Examining the proof of Theorem~\ref{thm:classformula}, we see that many
intermediate results hold before taking determinants: it is the case for
Lemma~\ref{lem:UMexact}, Lemma~\ref{lem:Jn} and the key 
equations~\eqref{eq:lambdaJ:2} and  \eqref{eq:lambdaJ:3} involved in
the proof of Lemma~\ref{lem:lambdaJ}.
One may then wonder if a ``nondeterminantal'' class formula, comparing
directly $U_M$ and $\Lie_M(R)$, could exist.

However, writing down a precise statement in this direction is not
obvious since finding a ``nondeterminantal'' version of the $L$-series
looks like a difficult task. Nonetheless, after Lemma~\ref{lem:lambdaJ},
we observe that the class formula is equivalent to
$$\lim_{J \to 0} \lambda^{(J)} = 1 
\qquad\text{($\infty$-adic convergence),}$$
meaning that, for all $\varepsilon > 0$,
there exists a nonzero ideal $J_\varepsilon \subset R$ such that
$\Vert \lambda^{(J)} -1 \Vert \leq \varepsilon$ for all nonzero ideal
$J \subset J_\varepsilon$.
Besides $\lambda^{(J)}$, which was previously defined as an element
of $K_\infty\langle\tau\rangle^\times$, can alternatively be thought
as an isomorphism between $\det_{A[\tau]}(\Lie_M(J[\tau]))$
and $\det_{A[\tau]}(U_M^{(J)})$. Given that $\Lie_M(J[\tau])$
and $U_M^{(J)}$ are already isomorphic as modules, a
``nondeterminantal'' version of $\lambda^{(J)}$ also makes sense:
it is the choice of an isomorphism $\alpha^{(J)} : \Lie_M(J[\tau])
\stackrel{\sim}{\longrightarrow} U_M^{(J)}$.

In order to compare the $\alpha^{(J)}$ between them, we need to
embed them in a common space. For this, we just extend scalars to
$K_\infty\langle\tau\rangle$, which allows to view $\alpha^{(J)}$
as a $K_\infty\langle\tau\rangle$-linear endomorphism of
$\Lie_M(L_\infty\langle\tau\rangle)$, that is, an element of
$$\mathcal E := 
\End_{K_\infty\langle\tau\rangle}\big(\Lie_M(L_\infty\langle\tau\rangle)\big).$$
Finally, we introduce topology by endowing $\mathcal E$ with the
classical endomorphism norm, namely
$$\Vert f \Vert = \sup_{\Vert x \Vert \leq 1} \Vert f(x) \Vert$$
for a choice of norm on $\Lie_M(L_\infty\langle\tau\rangle)$ as
introduced in Subsection~\ref{ssec:logexp}. We emphasize that the
resulting topology does not depend on any choice.

In this setting, a ``nondeterminantal'' version of the class
formula can be formulated as follows.

\begin{conj}
\label{conj:nondet:classformula}
Let $\mathcal J$ be the set of nonzero ideals of $R$.
There exists a sequence $(\alpha^{(J)})_{J \in \mathcal J}$
of elements of $\mathcal E$ such that:
\begin{itemize}
\item for all $J \in \mathcal J$, we have
$\alpha^{(J)}\big(\Lie_M(J[\tau])\big) = U_M^{(J)}$,
\item $\lim_{J \to 0} \alpha^{(J)} = \id_{\Lie_M(L_\infty\langle\tau\rangle)}$.
\end{itemize}
\end{conj}

Conjecture~\ref{conj:nondet:classformula}
admits a positive
answer if we replace the $\infty$-adic topology by of the $\tau$-adic
topology, as a consequence of Lemma~\ref{lem:Jn}. The difficulty is then 
really to switch between topologies. In the one-dimensional case, which
occurs after taking determinants, we bypassed this difficulty by
using the equality
$$\big(1 + K_\infty\langle\tau\rangle \tau\big) 
  \cap K_\infty\langle\tau\rangle^\times \cap A[\tau] = \{1\}.$$
In higher dimension, similar techniques can be used to prove that
Conjecture~\ref{conj:nondet:classformula} indeed holds true after
extending scalars from $A$ to $K$.
Precisely, we have the following theorem.

\begin{thm}
\label{thm:nondet:classformula}
There exists a sequence $(\alpha^{(J)})_{J \in \mathcal J}$
of elements of $\mathcal E$ such that:
\begin{itemize}
\item for all $J \in \mathcal J$, we have
$\alpha^{(J)}\big(\Lie_M(L[\tau])\big) = K \otimes_A U_M^{(J)}$,
\item $\lim_{J \to 0} \alpha^{(J)} = \id_{\Lie_M(L_\infty\langle\tau\rangle)}$.
\end{itemize}
\end{thm}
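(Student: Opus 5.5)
Over $K$ everything becomes a principal ideal domain, and the plan is to use this to upgrade the determinantal argument. Set $V := \Lie_M(L_\infty\langle\tau\rangle)$, a free $K_\infty\langle\tau\rangle$-module of rank $d$. The modules $\Lie_M(L[\tau]) = K\otimes_A \Lie_M(R[\tau])$ and $U^{(J)}_K := K\otimes_A U_M^{(J)}$ are free $K[\tau]$-lattices in $V$. For $\Lie_M(L[\tau])$ this holds because $K[\tau]$ is a PID; for $U^{(J)}_K$ it follows from Theorem~\ref{thm:UMJ}. The task is to produce, for each $J$, an isomorphism $\alpha^{(J)}:\Lie_M(L[\tau])\to U_K^{(J)}$ that tends to the identity.

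\emph{Step 1 (approximation mod $\tau^n$).} Fix a $K$-basis $e_1,\dots,e_d$ of $\Lie_M(L)$; it is also a $K[\tau]$-basis of $\Lie_M(L[\tau])$. Let $n$ be given and take $J\subset J_n$ as in Lemma~\ref{lem:Jn}. Then each element of $U_M^{(J)}$ is congruent mod $V\tau^n$ to an element of $\Lie_M(J[\tau])$. Conversely, the exact sequence in the proof of Lemma~\ref{lem:Jn}, together with the finiteness of $H_M^{(J)}$, shows that after tensoring with $K$ the reduction $U_K^{(J)}/U_K^{(J)}\tau^n\to \Lie_M(L[\tau]/\tau^n)$ is an isomorphism. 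We may therefore choose $u_i^{(J)}\in U_K^{(J)}$ with $u_i^{(J)}\equiv e_i \pmod{V\tau^n}$.

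\emph{Step 2 (these form a basis).} Since $U_K^{(J)}/U_K^{(J)}\tau\simeq\Lie_M(L)$ and the $u_i^{(J)}$ reduce to a basis there, a Nakayama argument for the graded-like situation over $K[\tau]$ shows that the $u_i^{(J)}$ form a $K[\tau]$-basis of $U_K^{(J)}$. Indeed, $U_K^{(J)}$ is free of rank $d$, so the transition matrix $P$ from a basis to the family $(u_i^{(J)})$ lies in $M_d(K[\tau])$. Its determinant is congruent to a unit mod $\tau$. It is also a divisor of the determinant obtained from the lattice comparison, which after normalisation is $\lambda^{(J)}$ relative to $L^{(J)}$. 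To conclude, I will use Theorem~\ref{thm:classformula} tensored with $K$: the $K[\tau]$-line $\det U_K^{(J)}$ equals $L^{(J)}(M;\tau)\det\Lie_M(L[\tau])$. Hence $\det(u^{(J)}_1,\dots,u^{(J)}_d)=c\,L^{(J)}(M;\tau)$ for some $c\in K[\tau]$. This $c$ is a polynomial congruent to $1$ mod $\tau$, and we must show $c=1$, i.e.\ that the $u_i^{(J)}$ span $U_K^{(J)}$. This is the main obstacle. I would first try to exploit freedom in the choice of $u_i^{(J)}$: start from an actual basis $b_i$ of $U^{(J)}_K$ with $b\equiv e\,Q\pmod{\tau^n}$ where $Q\in GL_d(K[\tau]/\tau^n)$. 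Lift $Q^{-1}$ to $\tilde Q\in GL_d(K[\tau])$, which is possible because triangular/elementary lifts exist over $K[\tau]$, and set $u:=b\tilde Q$. This is a genuine basis, and it is congruent to $e$ mod $\tau^n$.

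\emph{Step 3 (convergence in the $\infty$-adic topology).} Define $\alpha^{(J)}$ by $e_i\mapsto u_i^{(J)}$, with matrix $X_J\in GL_d(K_\infty\langle\tau\rangle)$ satisfying $X_J\equiv 1\pmod{\tau^n}$. The tail beyond $\tau^n$ is not yet controlled, and this is where the topology switch enters. Recall that $\det X_J=L^{(J)}(M;\tau)\cdot\mu_J$ with $\mu_J\in K[\tau]\cap(1+\tau K_\infty\langle\tau\rangle)$ a unit of $K_\infty\langle\tau\rangle$. We may also multiply $u$ on the right by any matrix of $GL_d(K[\tau])$ that is $\equiv 1 \pmod{\tau^n}$. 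The plan is to reduce $X_J$ by right multiplication with such matrices, using a division/Hermite-type algorithm over $K[\tau]$ together with the density of $K$ in $K_\infty$, so that the coefficients of $\tau^m$ for $m\ge n$ become small. Concretely, write $X_J = Y_J\,L^{(J)}$-normalised, and approximate $X_J^{-1}$ by a matrix $G\in GL_d(K[\tau])$ with $G\equiv X_J^{-1}$ up to $\varepsilon$ in $\|\cdot\|$ and $G\equiv1 \pmod{\tau^n}$. This is possible since $K[\tau]$ is dense in $K_\infty\langle\tau\rangle$ and determinant-$1$ corrections can be absorbed by elementary matrices. Replacing $u$ by $uG$ gives $X_JG$ within $\varepsilon$ of the identity. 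The delicate point, which I expect to be the real obstacle, is ensuring that $G$ is invertible over $K[\tau]$ and not merely over $K_\infty\langle\tau\rangle$. For this I would use that $\det X_J\cdot L^{(J)}(M;\tau)^{-1}$ lies in $K[\tau]$ and is invertible, hence constant $=1$ by the same argument as in Theorem~\ref{thm:classformula}. Together with $L^{(J)}(M;\tau)\to1$, this gives $\det X_J\to 1$. One then approximates within $SL_d$, which is generated by elementary matrices over the Euclidean ring $K[\tau]$ and is dense in $SL_d(K_\infty\langle\tau\rangle)$, to make $X_J\to\id$ as $J\to0$.
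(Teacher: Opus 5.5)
Your final paragraph is essentially the paper's proof. The class formula, tensored with $K$, gives a $K[\tau]$-basis of $K\otimes_A U_M^{(J)}$ whose matrix $X_J$ in $(e_i)$ satisfies $\det X_J=L^{(J)}(M;\tau)\to 1$. One then corrects $X_J$ on the right by an element of $\mathrm{SL}_d(K[\tau])$ supplied by Proposition~\ref{prop:SLdense}.

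As written, ``approximating within $\mathrm{SL}_d$'' is loose, because $X_J\notin\mathrm{SL}_d$. Make it precise by splitting off the determinant, as the paper does:
\begin{itemize}
\item write $X_J=\Delta_J Y_J$ with $\Delta_J=\mathrm{diag}(L^{(J)}(M;\tau),1,\ldots,1)$ and $Y_J\in\mathrm{SL}_d(K_\infty\langle\tau\rangle)$;
\item pick $G_J\in\mathrm{SL}_d(K[\tau])$ close enough to $Y_J$, and bound $\Vert G_J^{-1}\Vert$ by Cramer's rule;
\item set $\alpha^{(J)}:=X_JG_J^{-1}$, which gives $\Vert\alpha^{(J)}-\id\Vert\le\Vert L^{(J)}(M;\tau)-1\Vert$.
\end{itemize}
The density rests on $\mathrm{SL}_d(K_\infty\langle\tau\rangle)$ being generated by elementary matrices, because $K_\infty\langle\tau\rangle$ is Euclidean. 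Generation of $\mathrm{SL}_d(K[\tau])$ is beside the point.

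Your Steps~1--2 are an unnecessary detour, and two claims in them are wrong as stated.
\begin{itemize}
\item The ``Nakayama argument'' fails because $\tau$ is not in the Jacobson radical of $K[\tau]$. For example, $1+\tau$ reduces to a basis modulo $\tau$ but does not generate $K[\tau]$; you notice this yourself.
\item Not every $Q\in\mathrm{GL}_d(K[\tau]/\tau^n)$ lifts to $\mathrm{GL}_d(K[\tau])$. A lift has determinant in $K[\tau]^\times=K^\times$, so it exists only if $\det Q$ is constant modulo $\tau^n$. Here $\det Q\equiv c\,L^{(J)}(M;\tau)\pmod{\tau^n}$ with $c\in K^\times$. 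So your lift needs more than $J\subset J_n$: $J$ must also lie in every prime of degree less than $n$, so that $L^{(J)}(M;\tau)\equiv 1\pmod{\tau^n}$ by Proposition~\ref{prop:eulertaun}.
\end{itemize}

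None of this affects the conclusion, because the congruence $X_J\equiv 1\pmod{\tau^n}$ is never used. The $\tau$-adic content of Lemma~\ref{lem:Jn} was already consumed in proving Theorem~\ref{thm:classformula}. You can therefore start, as the paper does, from an arbitrary basis of $K\otimes_A U_M^{(J)}$, rescaled by a constant in $K^\times$ so that its determinant is exactly $L^{(J)}(M;\tau)$.
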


Before proving the theorem, we recall the following proposition.

\begin{prop}
\label{prop:SLdense}
$\mathrm{SL}_d(K[\tau])$ is dense in $\mathrm{SL}_d(K_\infty\langle\tau\rangle)$.
\end{prop}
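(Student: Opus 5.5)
The idea is to reduce to elementary matrices. $K_\infty\langle\tau\rangle$ is a Euclidean domain, and $K[\tau]$ is dense in it, since truncating a convergent series and approximating each coefficient by an element of $K$ gives arbitrarily good approximations in the Gauss norm. By Euclidean division, $\mathrm{SL}_d$ of a Euclidean domain is generated by elementary matrices $E_{ij}(x)=1+xe_{ij}$ with $i\neq j$. This is the standard row-reduction argument: Euclidean steps bring a matrix of determinant $1$ to a diagonal matrix, and a diagonal matrix of determinant $1$ is a product of elementary matrices by Whitehead's lemma.

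So let $g\in \mathrm{SL}_d(K_\infty\langle\tau\rangle)$ and write $g=E_1\cdots E_N$ with $E_k=E_{i_kj_k}(x_k)$, $x_k\in K_\infty\langle\tau\rangle$. We need the Euclidean division to work. For $K_\infty\langle\tau\rangle$, the Weierstrass preparation theorem says every nonzero element is a unit times a polynomial, namely its distinguished part. Taking the degree of this distinguished polynomial as the Euclidean function, division by a distinguished polynomial is the classical Weierstrass division. Hence $K_\infty\langle\tau\rangle$ is a PID and Euclidean, and $\mathrm{SL}_d=E_d$ holds.

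For each $k$, choose $y_k\in K[\tau]$ with $\Vert x_k-y_k\Vert$ small, and set $h=\prod_k E_{i_kj_k}(y_k)\in \mathrm{SL}_d(K[\tau])$. Multiplication of matrices over $K_\infty\langle\tau\rangle$ is continuous for the Gauss norm, which is submultiplicative because coefficients of norm $\le 1$ stay bounded under products. Therefore $h$ is close to $g$ whenever the $y_k$ are close enough to the $x_k$, with $N$ fixed.

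The main obstacle is the Euclidean/Weierstrass step. In particular, we must check that the quotient and remainder stay in $K_\infty\langle\tau\rangle$ with the convergence condition $|a_i|\to0$. If we prefer not to use full Weierstrass division, an alternative is to argue directly. First, any $u\in K_\infty\langle\tau\rangle^\times$ can be approximated by elements $v\in K[\tau]$ that are themselves units of $K_\infty\langle\tau\rangle$, since units form an open set. Second, using the PID property, we reduce $g$ to a diagonal matrix by $\mathrm{SL}_d$-row operations, and approximate each operation as above.
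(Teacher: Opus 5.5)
Your proof is correct and takes the same route as the paper: $K_\infty\langle\tau\rangle$ is Euclidean, so $\mathrm{SL}_d(K_\infty\langle\tau\rangle)$ is generated by elementary matrices, each of which is approximated by one over $K[\tau]$, and continuity of multiplication finishes the argument. Your Weierstrass-degree argument and the submultiplicativity check supply details the paper leaves implicit; however, drop the closing ``alternative'' paragraph, since the PID property alone does not make $\mathrm{SL}_d$ generated by elementary matrices and the only units of $K[\tau]$ are $K^\times$, so that fallback cannot produce elements of $\mathrm{SL}_d(K[\tau])$ without the Euclidean/Whitehead argument.
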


\begin{proof}
Since $K_\infty\langle\tau\rangle$ is an Euclidean ring,
the group $\mathrm{SL}_d(K_\infty\langle\tau\rangle)$ is generated
by its transvections.
Given that any such transvection can be realized as a limit of transvections
over $K[\tau]$, the proposition follows.
\end{proof}

\begin{proof}[Proof of Theorem~\ref{thm:nondet:classformula}]
Throughout the proof, we fix a $K[\tau]$-basis $\mathcal B$ of
$\Lie(L[\tau])$.

Let $J$ be nonzero ideal of $A$. We consider an isomorphism
$\beta : \Lie_M(J[\tau]) \to U_M^{(J)}$ and we let $B \in
\mathrm{GL}_d(K_\infty\langle\tau\rangle)$ denote its
matrix in the distinguished basis $\mathcal B$.
We normalize $\beta$ such that $\det B = \lambda^{(J)}$.
We write $B = \Delta B_1$ where
$\Delta$ is the diagonal matrix with diagonal entries 
$(\lambda^{(J)}, 1, \ldots, 1)$ and $B_1$ has determinant~$1$.
By Proposition~\ref{prop:SLdense}, there exists a matrix
$A_1 \in \mathrm{SL}_d(K[\tau])$ such that
$$\big\Vert B_1 - A_1 \big\Vert 
< \min\left(\Vert B_1 \Vert, \, \frac{\Vert \lambda^{(J)} - 1 \Vert}{\Vert B_1\Vert^d \cdot \Vert \Delta \Vert}\right).$$
Then $\Vert A_1 \Vert = \Vert B_1 \Vert$.
Besides, expressing the inverse of $A_1$ using Cramer's formula, we
find $\Vert A_1^{-1} \Vert \leq \Vert A_1 \Vert^d = \Vert B_1 \Vert^d$.
Hence, if $I_d$
denotes the identity matrix of size $d$, we obtain
\begin{align*}
\big\Vert B A_1^{-1} - I_d \big\Vert 
 & = \big\Vert \Delta (B_1 - A_1) A_1^{-1} + (\Delta - I_d) \big\Vert \\
 & \leq \max\left(\big\Vert \Delta \big\Vert \cdot \big\Vert B_1 - A_1 \big\Vert \cdot \big\Vert A_1^{-1} \big\Vert, \, \big\Vert \lambda^{(J)} - 1 \big\Vert\right)
   = \big\Vert \lambda^{(J)} - 1 \big\Vert.
\end{align*}
Therefore, if $\alpha^{(J)} \in \mathcal E$ is the endomorphism whose
matrix in the basis $\mathcal B$ is $B A_1^{-1}$, we have
$$\alpha^{(J)}\big(\Lie_M(L[\tau])\big) = K \otimes_A U_M^{(J)}$$
given that $A_1^{-1}$ has coefficients in $K[\tau]$, which implies that the
endomorphism it represents induces a bijection of
$K \otimes_A \Lie_M(J[\tau]) = \Lie_M(L[\tau])$. Moreover,
$\Vert \alpha^{(J)} - \id_{\Lie_M(L_\infty\langle\tau\rangle)}\Vert 
\leq \Vert \lambda^{(J)} - 1 \Vert$,
proving that $\alpha^{(J)}$ converges to the identity when $J$ 
goes to $0$.
\end{proof}

\section{More on convergence}
\label{sec:convergence}

Throughout the article, we have worked with $L_\infty\langle\tau\rangle$
and $K_\infty\langle\tau\rangle$ which look perfectly suited for our
developments but actually do not capture the most accurate convergence
conditions one can expect in our context.
The aim of this final section is to show that one can replace them by
smaller spaces and derive from this strong convergence properties of
the $L$-series.

\subsection{Overconvergence of units}
\label{ssec:overconv}

We consider the space of \emph{overconvergent series} in the variable
$\tau$ defined as follows:
$$L_\infty\langle\tau\rangle^\dagger = \bigcup_{r > 1} L_\infty\langle\tau; r\rangle.$$
In other words, $L_\infty\langle\tau\rangle^\dagger$ consists of
skew power series that converge on a disc of radius strictly larger
than $1$. Clearly, $L_\infty\langle\tau\rangle^\dagger$ is a left
$R[\tau]$-module and so
$\Lie_M(L_\infty\langle\tau\rangle^\dagger)$ and
$E_M(L_\infty\langle\tau\rangle^\dagger)$ are well-defined.
We moreover have a natural inclusion
$$L_\infty\langle\tau\rangle^\dagger\subset L_\infty\langle\tau\rangle,$$
which induces natural inclusions
$$E_M\big(L_\infty\langle\tau\rangle^\dagger\big) \subset
  E_M\big(L_\infty\langle\tau\rangle\big) 
  \quad \text{and} \quad
  \Lie_M\big(L_\infty\langle\tau\rangle^\dagger\big) \subset
  \Lie_M\big(L_\infty\langle\tau\rangle\big).$$
We use them to endow all the above overconvergent
spaces with the inherited Gauss norm, that we continue to
denote by $\Vert \cdot \Vert$. We underline that the overconvergent
versions are no longer complete.

In the overconvergent setting, we nonetheless have an analogue of
Proposition~\ref{prop:expM}.

\begin{prop}
The exponential map $\exp_M$ induces by restriction and 
corestriction an $A[\tau]$-linear map
$$\exp_M : \Lie_M\big(L_\infty\langle\tau\rangle^\dagger\big) \longrightarrow
  E_M\big(L_\infty\langle\tau\rangle^\dagger\big)$$
which is uniformly locally an invertible isometry.
\end{prop}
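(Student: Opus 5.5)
We follow the proof of Proposition~\ref{prop:expM}, working in coordinates. After a choice of coordinates for $M_L$ (Remark~\ref{rem:choice-of-basis}), $\exp_M$ and $\log_M$ become left multiplication by the matrices $\exp_E = \sum_i e_i \tau^i$ and $\log_E = \sum_i \ell_i \tau^i$ in $M_d(L)[\![\tau]\!]$. The spaces $\Lie_M(L_\infty\langle\tau\rangle^\dagger)$ and $E_M(L_\infty\langle\tau\rangle^\dagger)$ are then identified with $(L_\infty\langle\tau\rangle^\dagger)^d$. Such choices change norms only up to equivalence, and they preserve overconvergence, since changes of coordinates are given by matrices in $M_d(L[\tau])$ and these act on each $L_\infty\langle\tau; r\rangle$.

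\emph{Step 1: $\exp_M$ preserves overconvergence.} Let $x \in L_\infty\langle\tau; r\rangle^d$ for some $r>1$. By Proposition~\ref{prop:expphi}(i), $\exp_E$ lies in $M_d(L_\infty\langle\tau; \rho\rangle)$ for every $\rho$. Take $\rho = \Vert x\Vert_r$ and apply Lemma~\ref{lem:rdcproduct} entrywise, with $s = r$: this gives $\exp_E x \in L_\infty\langle\tau; r\rangle^d$. The map is $A[\tau]$-linear by restriction of the statement for $L_\infty\langle\tau\rangle$.

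\emph{Step 2: local isometry.} The Gauss norm used here is the inherited $\Vert\cdot\Vert = \Vert\cdot\Vert_1$. The isometry statement therefore follows verbatim from Proposition~\ref{prop:expM}. Concretely, there is $\varepsilon>0$ such that for $\Vert x\Vert\le\varepsilon$ each term $e_i\tau^i x$ with $i\ge1$ has norm strictly less than $\Vert x\Vert$, so $\Vert \exp_M x\Vert = \Vert x\Vert$. By additivity this holds on every disc of radius $\varepsilon$.

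\emph{Step 3: local invertibility, the main point.} Given $y \in E_M(L_\infty\langle\tau\rangle^\dagger)$ with $\Vert y\Vert \le \varepsilon$, we must show that its unique preimage $\log_M y$ in the $\varepsilon$-disc of $\Lie_M(L_\infty\langle\tau\rangle)$ is again overconvergent. By Proposition~\ref{prop:expphi}(ii), $\log_E \in M_d(L_\infty\langle\tau;\varepsilon_0\rangle)$ for some $\varepsilon_0>0$; shrink $\varepsilon$ so that $\varepsilon<\varepsilon_0$.

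Now $y \in L_\infty\langle\tau; r\rangle^d$ for some $r>1$. We need an $r'\in(1,r]$ with $\Vert y\Vert_{r'} \le \varepsilon_0$. Write $y = \sum_i y_i\tau^i$. Then $\Vert y\Vert_{r'} = \max_i |y_i| r'^{q^i}$. The terms with large $i$ are uniformly small for all $r' \le r$, since $|y_i| r'^{q^i} \le |y_i| r^{q^i} \to 0$. Each of the finitely many remaining terms is continuous in $r'$ and tends to $|y_i| \le \varepsilon < \varepsilon_0$ as $r'\to1$. Hence $\Vert y\Vert_{r'} \to \Vert y\Vert_1$ as $r' \to 1^+$, and some $r'>1$ gives $\Vert y\Vert_{r'} \le \varepsilon_0$.

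Lemma~\ref{lem:rdcproduct} with $f = \log_E$ (convergent on radius $\varepsilon_0$) and $g = y$, $s = r'$, then gives $\log_E y \in L_\infty\langle\tau; r'\rangle^d \subset (L_\infty\langle\tau\rangle^\dagger)^d$. So the inverse of the isometry on the $\varepsilon$-disc around $0$ restricts to the overconvergent spaces. Translating by additivity, using Step 1 to move centers $\exp_M(x)$, yields the claim on every disc.

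The only genuine obstacle is this continuity-in-radius argument. It is needed to pass from the smallness of $y$ for $\Vert\cdot\Vert_1$ to smallness for some $\Vert\cdot\Vert_{r'}$ with $r'>1$, so that Lemma~\ref{lem:rdcproduct} can be applied to $\log_E$.
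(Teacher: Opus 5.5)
Your proposal is correct and follows essentially the same route as the paper. The paper also reduces the statement to the proof of Proposition~\ref{prop:expM}, plus Lemma~\ref{lem:localhomeo}, whose proof is your Step~3: pick $r'>1$ with $\Vert y\Vert_{r'}$ still within the convergence radius of $\log_E$ by continuity of $\rho\mapsto\Vert y\Vert_\rho$, then apply Lemma~\ref{lem:rdcproduct}. The only difference is cosmetic: you prove that continuity directly by splitting off the tail, while the paper notes that $\log G$ is a supremum of affine functions.
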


The proof is identical to the proof of Proposition~\ref{prop:expM}
after the following lemma.

\begin{lem}
\label{lem:localhomeo}
There exists $\varepsilon > 0$ such that for all 
$x \in E_M(L_\infty\langle\tau\rangle^\dagger)$ with
$\Vert x \Vert \leq \varepsilon$, we have
$\log_M(x) \in \Lie_M(L_\infty\langle\tau\rangle^\dagger)$.
\end{lem}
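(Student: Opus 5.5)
After choosing coordinates for $M_L$ (Remark~\ref{rem:choice-of-basis}), $\log_M$ becomes left multiplication by the matrix series $\log_E = \sum_i \ell_i \tau^i \in M_d(L)[\![\tau]\!]$, with $\ell_0 = 1$. The element $x$ is then a vector in $(L_\infty\langle\tau\rangle^\dagger)^d$. So I would prove the following concrete claim: there is $\varepsilon > 0$ such that $\log_E \cdot x \in (L_\infty\langle\tau; s\rangle)^d$ for some $s > 1$, whenever $x \in (L_\infty\langle\tau; r\rangle)^d$ for some $r > 1$ and $\Vert x \Vert_1 \leq \varepsilon$. The natural tool is Lemma~\ref{lem:rdcproduct}, combined with Proposition~\ref{prop:expphi}(ii), which gives $\log_E \in M_d(L_\infty\langle\tau; \rho\rangle)$ for some $\rho > 0$. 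I would take $\varepsilon < \rho$; a matrix version of Lemma~\ref{lem:rdcproduct} follows immediately from the scalar one, using the ultrametric inequality entrywise.

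It then remains to show the following: if $x \in L_\infty\langle\tau; r\rangle$ with $r > 1$ and $\Vert x \Vert_1 < \rho$, then $\Vert x \Vert_s \leq \rho$ for some $s \in (1, r]$. Given this, Lemma~\ref{lem:rdcproduct}, applied with $f = \log_E$ (radius $\rho$) and $g = x$ (radius $s$), yields $\log_E x \in L_\infty\langle\tau; s\rangle \subset L_\infty\langle\tau\rangle^\dagger$, as required.

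The main obstacle is this continuity of the Gauss norm in the radius. Write $x = \sum_i a_i \tau^i$. For each $i$, the quantity $\vert a_i\vert s^{q^i}$ is continuous and increasing in $s$. Since $x \in L_\infty\langle\tau; r\rangle$, there is $i_0$ such that $\vert a_i\vert r^{q^i} \leq \rho$ for all $i \geq i_0$. Hence for every $s \leq r$ and every $i \geq i_0$ we get $\vert a_i \vert s^{q^i} \leq \rho$. For the finitely many indices $i < i_0$, we have $\vert a_i\vert < \rho$ by the hypothesis $\Vert x \Vert_1 < \rho$. By continuity, we can therefore pick $s > 1$ close enough to $1$ that $\vert a_i\vert s^{q^i} \leq \rho$ also holds for these indices. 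Together this gives $\Vert x\Vert_s \leq \rho$.

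Applying this coordinatewise gives the lemma with $\varepsilon$ any positive number strictly smaller than $\rho$. The only extra care needed is that the Gauss norm on $E_M(L_\infty\langle\tau\rangle)$ depends on the choice of coordinates. Since different choices give equivalent norms, changing the choice only rescales $\varepsilon$ by a constant, so the statement holds for any choice.
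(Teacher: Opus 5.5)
Your proof is correct and follows essentially the same route as the paper: take $\varepsilon$ below the radius of convergence of $\log_E$, use continuity of the Gauss norm $\Vert x\Vert_s$ in the radius $s$ to find some $s>1$ at which $\Vert x\Vert_s$ stays below that radius, and conclude with Lemma~\ref{lem:rdcproduct}. The only difference is cosmetic: the paper gets continuity from the fact that the logarithm of the growth function $s \mapsto \Vert x \Vert_s$ is a supremum of affine functions. You instead prove the needed bound directly, by splitting off the tail $i \geq i_0$ and handling the finitely many remaining coefficients.
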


\begin{proof}
Let $E$ be the Anderson module over $L$ corresponding to $M_L$.
We choose $\varepsilon$ such that $\log_E \in M_d(L_\infty\langle\tau; 2\varepsilon\rangle)$.
We also pick $r > 1$ such that $x \in E_M(L_\infty\langle\tau; r\rangle)$.
If $x = 0$, the lemma is clear. Otherwise, we consider its \emph{growth function}
defined by $G : (0, r) \to \RR_{> 0}$, $\rho \mapsto \Vert x \Vert_\rho$.
The function $\log G$ is a supremum of affine functions, so it is
continuous. Hence $G$ itself is continuous as well.
Besides, we know that $G(1) = \Vert x \Vert \leq \varepsilon$.
Therefore, there exists $r' > 1$ such that
$\Vert x \Vert_{r'} = G(r') \leq 2 \varepsilon$.
It then follows from Lemma~\ref{lem:rdcproduct} that
$\log_M(x) \in \Lie_M\big(L_\infty\langle\tau, r'\rangle\big)$
and we are done.
\end{proof}

For a nonzero ideal $J$ of $R$,
we can now define overconvergent versions of $U_M^{(J)}$ and $H_M^{(J)}$,
denoted by $U_M^{(J),\dagger}$ and $H_M^{(J),\dagger}$ respectively, as
the kernel and cokernel of the map
$$\Lie_M\big(L_\infty\langle\tau\rangle^\dagger\big)
  \xrightarrow{\exp_M}
  \frac{E_M\big(L_\infty\langle\tau\rangle^\dagger\big)}{E_M\big(J[\tau]\big)}
  \simeq E_M\big(L_\infty/J\langle\tau\rangle^\dagger\big).$$
We have natural $A[\tau]$-linear morphisms
$u_M^{(J)} : U_M^{(J),\dagger} \to U_M^{(J)}$ and
$h_M^{(J)} : H_M^{(J),\dagger} \to H_M^{(J)}$.

\begin{thm}
\label{thm:overconv}
The maps $u_M^{(J)}$ and $h_M^{(J)}$ are both isomorphisms.
\end{thm}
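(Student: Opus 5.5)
We compare the two fundamental exact sequences, the overconvergent one and the classical one, through the natural inclusion of the first into the second. Write $\mathcal L^\dagger := \Lie_M(L_\infty\langle\tau\rangle^\dagger)$, $\mathcal L := \Lie_M(L_\infty\langle\tau\rangle)$, $\mathcal E^\dagger := E_M(L_\infty/J\langle\tau\rangle^\dagger)$ and $\mathcal E := E_M(L_\infty/J\langle\tau\rangle)$. We get a commutative diagram with exact rows
$$0 \to U_M^{(J),\dagger} \to \mathcal L^\dagger \to \mathcal E^\dagger \to H_M^{(J),\dagger} \to 0
\quad\text{over}\quad
0 \to U_M^{(J)} \to \mathcal L \to \mathcal E \to H_M^{(J)} \to 0.$$
Injectivity of $u_M^{(J)}$ is immediate, since $\mathcal L^\dagger \subset \mathcal L$. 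The substance of the theorem is therefore surjectivity of $u_M^{(J)}$ together with bijectivity of $h_M^{(J)}$. We will reduce everything to two density and approximation facts, using that $\exp_M$ is uniformly locally an invertible isometry in both settings (Proposition~\ref{prop:expM} and its overconvergent analogue, via Lemma~\ref{lem:localhomeo}).

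\emph{Surjectivity of $h_M^{(J)}$.} The key point is that $\mathcal E^\dagger$ is dense in $\mathcal E$, since polynomials in $\tau$ are dense. Let $y\in\mathcal E$. We first pick $y^\dagger\in\mathcal E^\dagger$ with $\Vert y-y^\dagger\Vert\le\varepsilon$, where $\varepsilon$ is the radius from Proposition~\ref{prop:expM}. Lifting to $E_M(L_\infty\langle\tau\rangle)$, the difference $y-y^\dagger$ lies in the image of $\exp_M$ on the $\varepsilon$-disc of $\mathcal L$. Hence $y$ and $y^\dagger$ have the same class in $H_M^{(J)}$, which proves surjectivity.

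\emph{Injectivity of $h_M^{(J)}$.} Suppose $y^\dagger\in\mathcal E^\dagger$ equals $\exp_M(x)$ modulo $E_M(J[\tau])$ for some $x\in\mathcal L$. We approximate $x$ by some $x^\dagger\in\mathcal L^\dagger$, for instance a truncation, with $\Vert x-x^\dagger\Vert\le\varepsilon$. Then $z := \exp_M(x)-\exp_M(x^\dagger)$ is small. Moreover, after adjusting by the element of $E_M(J[\tau])\subset E_M(L_\infty\langle\tau\rangle^\dagger)$, the element $z$ is overconvergent, because $y^\dagger$ and $\exp_M(x^\dagger)$ both are. Lemma~\ref{lem:localhomeo} then gives $\log_M(z)\in\mathcal L^\dagger$. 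Consequently $y^\dagger=\exp_M(x^\dagger+\log_M z)$ modulo $E_M(J[\tau])$, with $x^\dagger+\log_M z\in\mathcal L^\dagger$, so the class of $y^\dagger$ in $H_M^{(J),\dagger}$ vanishes.

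\emph{Surjectivity of $u_M^{(J)}$.} The same argument with $y^\dagger=0$ shows that $U_M^{(J)}\subset U_M^{(J),\dagger}+\log_M(\text{small})$. To make this precise, take $u\in U_M^{(J)}$ and a truncation $x^\dagger$ of $u$ with $\Vert u - x^\dagger\Vert\le\varepsilon$. Then $\exp_M(x^\dagger)=\exp_M(u)-\exp_M(u-x^\dagger)$. Here $\exp_M(u)\in E_M(J[\tau])$ is overconvergent, and the left side is overconvergent, so the small element $w:=\exp_M(u-x^\dagger)$ is overconvergent. We next need $\log_M(w)=u-x^\dagger$. This holds because $\exp_M$ is an isometric bijection on the $\varepsilon$-disc and $\log_M$ is its inverse there. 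Lemma~\ref{lem:localhomeo} then gives $u-x^\dagger\in\mathcal L^\dagger$, hence $u\in\mathcal L^\dagger$, and so $u\in U_M^{(J),\dagger}$.

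The main obstacle is to make sure that the local inverse of $\exp_M$ on the $\varepsilon$-disc in $\mathcal L$ really agrees with $\log_M$ as a formal series. This requires $\varepsilon$ to be smaller than the convergence radius of $\log_E$, so that Lemma~\ref{lem:localhomeo} applies. A second subtlety is that the overconvergent spaces are not complete. The argument above avoids any limiting process, since it only uses one approximation step followed by an exact logarithm, which handles this issue.
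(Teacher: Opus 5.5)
Your proof is correct and uses the same ingredients as the paper's: split an element into a polynomial part, whose exponential is entire, plus a part of norm at most $\varepsilon$, then apply Lemma~\ref{lem:localhomeo} to conclude that the logarithm of a small overconvergent element is overconvergent. The only difference is bookkeeping: the paper applies the snake lemma once to reduce everything to the bijectivity of the induced map $\alpha$ from $\Lie_M(L_\infty\langle\tau\rangle/L_\infty\langle\tau\rangle^\dagger)$ to $E_M(L_\infty\langle\tau\rangle/L_\infty\langle\tau\rangle^\dagger)$, whereas you chase the two four-term exact sequences directly.
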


\begin{proof}
Applying the snake lemma to the following diagram
$$\begin{tikzcd}
  0 \ar[r]
& \Lie_M\big(L_\infty\langle\tau\rangle^\dagger\big) \ar[r] \ar[d,"\exp_M"]
& \Lie_M\big(L_\infty\langle\tau\rangle\big) \ar[r] \ar[d,"\exp_M"]
& \Lie_M\left(\frac{L_\infty\langle\tau\rangle}{L_\infty\langle\tau\rangle^\dagger}\right) \ar[r] \ar[d,"\alpha"]
& 0 \\
  0 \ar[r]
& E_M\big(L_\infty/J\langle\tau\rangle^\dagger\big) \ar[r]
& E_M\big(L_\infty/J\langle\tau\rangle\big) \ar[r]
& E_M\left(\frac{L_\infty\langle\tau\rangle}{L_\infty\langle\tau\rangle^\dagger}\right) \ar[r]
& 0
\end{tikzcd}$$
(where $\alpha$ is the induced map on cokernels),
we obtain the long exact sequence
$$0 \longrightarrow
  U_M^{(J),\dagger} \longrightarrow
  U_M^{(J)} \longrightarrow
  \ker \alpha \longrightarrow
  H_M^{(J),\dagger} \longrightarrow
  H_M^{(J)} \longrightarrow
  \coker \alpha \longrightarrow 0.$$
It is then enough to prove that $\alpha$ is an isomorphism.

We start by showing injectivity.
Let $x \in \Lie_M(L_\infty\langle\tau\rangle)$ such that
$\exp_M(x)$ is overconvergent. We want to prove that $x$ is itself
overconvergent.
We consider $\varepsilon > 0$ satisfying the condition of
Lemma~\ref{lem:localhomeo} together with the property that
$\exp_M$ is an isometry on the disc of radius~$\varepsilon$.
We write $x = x_1 + x_2$ with $x_1 \in \Lie_M(L[\tau])$
and $x_2 \in \Lie_M(L_\infty\langle\tau\rangle)$, 
$\Vert x_2 \Vert \leq \varepsilon$. Then
$\exp_M(x_1) \in E_M(L_\infty\langle\tau;\infty\rangle) \subset E_M(L_\infty\langle\tau\rangle^\dagger)$
and so
$$\exp_M(x_2) = \exp_M(x) - \exp_M(x_1) 
  \in E_M\big(L_\infty\langle\tau\rangle^\dagger\big).$$
Besides $\Vert \exp_M(x_2) \Vert = \Vert x_2 \Vert \leq
\varepsilon$. We then deduce from Lemma~\ref{lem:localhomeo}
that
$x_2 = \log_M(\exp_M(x_2)) \in \Lie_M(L_\infty\langle\tau\rangle^\dagger)$.
As a consequence, $x = x_1 + x_2 \in
\Lie_M(L_\infty\langle\tau\rangle^\dagger)$ as well and we conclude that $\alpha$ is injective.

We now move to surjectivity.
Let $y \in E_M(L_\infty\langle\tau\rangle)$. We
write $y = y_1 + y_2$ with $y_1 \in E_M(L[\tau])$ and
$y_2 \in E_M(L_\infty\langle\tau\rangle)$, $\Vert y_2
\Vert \leq \varepsilon$.
Then $\log_M(y_2) \in \Lie_M(L_\infty\langle\tau\rangle)$
and its image in 
$\Lie_M(L_\infty\langle\tau\rangle / L_\infty\langle\tau\rangle^\dagger)$
is a preimage of $y \bmod E_M(L_\infty\langle\tau\rangle^\dagger)$
by $\alpha$.
\end{proof}

\begin{rem}
The proof of the class formula presented in 
Sections~\ref{sec:motives}--\ref{sec:classformula}
extends almost \emph{verbatim} to the overconvergent
setting. Hence, if one prefers, one can totally avoid using
$L_\infty\langle\tau\rangle$ and work directly from the start
with overconvergent series.
\end{rem}

\subsection{Superconvergence of $L$-series}

Theorem~\ref{thm:overconv} has strong consequences on the
convergence properties of the $L$-series itself. Precisely,
mimicking the definition of $L_\infty\langle\tau\rangle^\dagger$,
we are led to consider the \emph{commutative} series
$\sum_i a_i \tau^i \in K_\infty\langle\tau\rangle$ for which there
exists $r > 1$ 
such that $\lim_{i \to \infty} \vert a_i \vert r^{q^i} = 0$.

Contrarily to the noncommutative case, those series not only
slightly overconverge outside the unit disc, but they actually
define entire functions on $K_\infty$, exhibiting extremely fast
convergence.
For this reason, we prefer using the term ``superconvergent''
in this case.

One should be careful that, although superconvergent series are
stable under sums, they are not stable under products. To handle
this slightly unpleasant situation, we introduce the following
definition.

\begin{deftn}[Superconvergence]
Let $\alpha$ be a positive real number.
A series $\sum_i a_i \tau^i \in K_\infty\langle\tau\rangle$ is
\emph{superconvergent at order $\alpha$}, or simply
\emph{$\alpha$-superconvergent}, if there exists $r > 1$ such that
$\lim_{i \to \infty} \vert a_i \vert r^{q^{\alpha i}} = 0$.
Their space is denoted by $K_\infty\llangle\tau; \alpha\rrangle$.
\end{deftn}

If $v_\infty$ denotes the $\infty$-adic valuation defined by
$v_\infty(a) = -\log_q \vert a \vert$ for $a \in K_\infty$,
one checks that $f = \sum_i a_i \tau^i$ is $\alpha$-superconvergent
if, and only if
$$\liminf_{i \to \infty} \: v_\infty(a_i) q^{-\alpha i} > 0.$$
We also note that $\alpha$-superconvergence implies $\beta$-superconvergence
for $\beta < \alpha$. Besides, $K_\infty\llangle\tau; \alpha\rrangle$
is a module over $K_\infty[\tau]$ for all $\alpha$.
However, as already underlined previously, it is not a ring.

\begin{lem}
\label{lem:multoverconv}
Let $f \in K_\infty\llangle\tau; \frac 1 \alpha\rrangle$ and
$g \in K_\infty\llangle\tau; \frac 1 \beta\rrangle$ for two positive
real numbers $\alpha$ and $\beta$. Then
$fg \in K_\infty\llangle\tau; \frac 1 {\alpha + \beta}\rrangle$.
\end{lem}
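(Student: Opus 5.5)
The plan is a direct estimate on the coefficients of the product, using the valuation criterion stated just before the lemma. Write $f = \sum_i a_i \tau^i$ and $g = \sum_j b_j \tau^j$. Since $\tau$ commutes with $K_\infty$ in $K_\infty\langle\tau\rangle$, the product is the ordinary Cauchy product $fg = \sum_n c_n \tau^n$ with $c_n = \sum_{i+j=n} a_i b_j$. By hypothesis there exist constants $\delta > 0$ and $N$ such that $v_\infty(a_i) \geq \delta q^{i/\alpha}$ for $i \geq N$ and $v_\infty(b_j) \geq \delta q^{j/\beta}$ for $j \geq N$. Shrinking $\delta$ and using that both series lie in $K_\infty\langle\tau\rangle$, so that their coefficients are bounded, we may assume there is a constant $C$ with
\[
v_\infty(a_i) \geq \delta q^{i/\alpha} - C \quad\text{and}\quad v_\infty(b_j) \geq \delta q^{j/\beta} - C
\]
for all $i,j \geq 0$.

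By the ultrametric inequality, $v_\infty(c_n) \geq \min_{i+j=n} \big(\delta(q^{i/\alpha} + q^{j/\beta})\big) - 2C$. The key step is the elementary inequality
\[
\max\big(q^{i/\alpha}, q^{j/\beta}\big) \geq q^{n/(\alpha+\beta)} \qquad \text{whenever } i+j=n.
\]
To prove it, suppose both $i/\alpha < n/(\alpha+\beta)$ and $j/\beta < n/(\alpha+\beta)$. Then $i + j < n(\alpha+\beta)/(\alpha+\beta) = n$, a contradiction. Hence $q^{i/\alpha} + q^{j/\beta} \geq q^{n/(\alpha+\beta)}$, and therefore
\[
v_\infty(c_n) \geq \delta q^{n/(\alpha+\beta)} - 2C.
\]
This gives $\liminf_n v_\infty(c_n) q^{-n/(\alpha+\beta)} \geq \delta > 0$, which is exactly $\tfrac{1}{\alpha+\beta}$-superconvergence of $fg$.

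The only point needing care is the uniformity for small indices. The finitely many terms with $i < N$ or $j < N$ are handled by the additive constant $C$, since $\delta q^{i/\alpha}$ is bounded for $i < N$. The constant $C$ becomes negligible after dividing by $q^{n/(\alpha+\beta)}$, so it does not affect the liminf. I do not anticipate any real obstacle beyond this bookkeeping.
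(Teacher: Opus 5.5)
Your proof is correct. It follows the same overall strategy as the paper: bound the terms $a_ib_j$ of the Cauchy product and compare $v_\infty(a_i)+v_\infty(b_j)$ with $q^{(i+j)/(\alpha+\beta)}$. The key inequality and the bookkeeping differ.

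The paper uses Jensen's inequality for the convex function $x\mapsto q^x$, namely $\alpha q^{i/\alpha}+\beta q^{j/\beta}\geq(\alpha+\beta)q^{(i+j)/(\alpha+\beta)}$. It then bounds the normalized valuation $C_{i,j}$ below by a weighted mean of $\alpha^{-1}v_\infty(a_i)q^{-i/\alpha}$ and $\beta^{-1}v_\infty(b_j)q^{-j/\beta}$, with weights tending to infinity, and concludes by a $\liminf$ argument.

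You replace convexity by the pigeonhole observation $\max(i/\alpha,j/\beta)\geq(i+j)/(\alpha+\beta)$. You also absorb the small indices into the global bound $v_\infty(a_i)\geq\delta q^{i/\alpha}-C$. This makes explicit a point the paper leaves implicit in its passage to $\liminf_{i+j\to\infty}$. There, the weighted-mean inequality needs $v_\infty(a_i)+v_\infty(b_j)\geq 0$, which holds only for $i+j$ large, and one must see that a bounded index carries bounded weight.

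In exchange, the Jensen route gives a sharper explicit constant. Write $\delta_f,\delta_g$ for the two $\liminf$'s. The paper's argument yields $\liminf_n v_\infty(c_n)q^{-n/(\alpha+\beta)}\geq(\alpha+\beta)\min(\delta_f/\alpha,\delta_g/\beta)$, which is always at least your $\min(\delta_f,\delta_g)$. This difference is irrelevant for the qualitative statement of the lemma.
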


\begin{proof}
We write $f = \sum_i a_i \tau^i$ and $g = \sum_i b_j \tau^j$.
By Jensen's inequality applied to the convex function 
$x \mapsto q^x$ ($x \in \RR_{\geq 0}$), the points $\frac i \alpha, \frac j \beta$ and the weights
$\frac \alpha{\alpha+\beta}, \frac \beta{\alpha+\beta}$, we have
$$\alpha q^{i/\alpha} + \beta q^{j/\beta} \geq (\alpha + \beta) q^{(i+j)/(\alpha + \beta)}.$$
Hence, for nonnegative integers $i$ and $j$, we have
$$C_{i,j} := \frac 1 {\alpha{+}\beta} \cdot 
  \frac{v_\infty(a_i)+v_\infty(b_j)}{q^{(i+j)/(\alpha+\beta)}}
\geq\dfrac{ v_\infty(a_i)+v_\infty(b_j)}{\alpha q^{i/\alpha}+\beta q^{j/\beta}}$$
Writing $A_i := \alpha^{-1} v_\infty(a_i) q^{-i/\alpha}$ and
$B_i = \beta^{-1} v_\infty(b_i) q^{-i/\beta}$, we obtain
$$C_{i,j} \geq \frac{A_i \alpha q^{i/\alpha} + B_j \beta q^{j/\beta}}{\alpha q^{i/\alpha} + \beta q^{j/\beta}}.$$
In other words, $C_{i,j}$ is controlled by a weighted mean of $A_i$ and
$B_j$ with weights going to infinity when $i$ and $j$ grow. It follows that 
$$\liminf_{i+j \to \infty}C_{i,j} \geq \min\big( \liminf_{i \to \infty} A_i, \liminf_{j \to \infty} B_j\big) > 0.$$
which shows that $fg = \sum_{i,j} a_i b_j \tau^{i+j} \in K_\infty\llangle\tau; \frac 1{\alpha+\beta}\rrangle$.
\end{proof}

\begin{cor}
\label{cor:Lsuperconv}
We have
$L^{(J)}(M; \tau) \in K_\infty\llangle\tau; \frac 1 d\rrangle$ with
$d = \dim_K \Lie_M(L)$.
\end{cor}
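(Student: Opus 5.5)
The plan is to read $L^{(J)}(M;\tau)$ off the class formula as a ratio of two determinants, and to show that each determinant is built from overconvergent data. By Theorem~\ref{thm:classformula}, $L^{(J)}(M;\tau)$ is the ratio of $\det_{A[\tau]}(U_M^{(J)})$ and $\det_{A[\tau]}(\Lie_M(J[\tau]))$ inside $\det_{K_\infty\langle\tau\rangle}(\Lie_M(L_\infty\langle\tau\rangle))$.

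First I fix coordinates. I choose a $K_\infty$-basis $e_1,\dots,e_d$ of $\Lie_M(L_\infty)$ taken inside $\Lie_M(L)$. This is possible because $\Lie_M(L)$ is a $K$-form of dimension $d$, as in the proof of Theorem~\ref{thm:UMJ}. The $e_i$ then also form a $K_\infty\langle\tau\rangle$-basis of $\Lie_M(L_\infty\langle\tau\rangle)\simeq \Lie_M(L_\infty)\langle\tau\rangle$. Since $\Lie_M(J[\tau])$ is an $A[\tau]$-lattice in $\Lie_M(L[\tau])$, its determinant is generated, after tensoring with $K$, by $\delta\cdot e_1\wedge\dots\wedge e_d$ with $\delta\in K[\tau]$, hence polynomial. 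By Theorem~\ref{thm:overconv}, $U_M^{(J)}=U_M^{(J),\dagger}$. After tensoring with $K$ (using Theorem~\ref{thm:UMJ} and the fact that $K\otimes_A U_M^{(J)}$ is free over the PID $K[\tau]$), I pick a $K[\tau]$-basis $u_1,\dots,u_d$ of $K\otimes_A U_M^{(J)}$ consisting of overconvergent elements. Writing $u_j=\sum_i f_{ij} e_i$, each $f_{ij}$ lies in the commutative ring $K_\infty\langle\tau\rangle$.

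The key step is to show that each $f_{ij}$ is superconvergent of order $1$. The reason is that $u_j\in\Lie_M(L_\infty\langle\tau; r\rangle)$ for some $r>1$. Here I would make the identification $\Lie_M(L_\infty\langle\tau;r\rangle)\simeq \Lie_M(L_\infty)\langle\tau;r\rangle$, meaning series $\sum_k a_k\tau^k$ with $a_k\in\Lie_M(L_\infty)$ and $|a_k|r^{q^k}\to0$. Expanding $a_k=\sum_i c_{ik}e_i$ with $c_{ik}\in K_\infty$ and using equivalence of norms on the finite-dimensional space $\Lie_M(L_\infty)$ gives $|c_{ik}|r^{q^k}\to 0$, that is, $f_{ij}\in K_\infty\llangle\tau;1\rrangle$. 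I expect this to be the main obstacle. The delicate point is that the right $\tau$-action on $\Lie_M(L_\infty\langle\tau\rangle)$ corresponds to multiplication by $\tau$ on the right in $\tau$-series, so that the coefficient of $\tau^k$ of $f_{ij}$ really is $c_{ik}$ with no Frobenius twist distorting norms. This is where the $\tau$-twisting must be checked carefully.

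Then $\det(f_{ij})$ is a signed sum of products of $d$ elements of $K_\infty\llangle\tau;1\rrangle$. Applying Lemma~\ref{lem:multoverconv} inductively, each product lies in $K_\infty\llangle\tau;\frac1d\rrangle$, and these spaces are stable under sums. Hence $\det(f_{ij})\in K_\infty\llangle\tau;\frac1d\rrangle$. By Theorem~\ref{thm:classformula} tensored with $K$, this determinant equals $L^{(J)}(M;\tau)\cdot\delta\cdot c$ for some unit $c\in K^\times$. Finally, to divide by the polynomial $\delta\in K[\tau]$, I would clear it differently: by choosing $J'$-coordinates, namely taking the $e_i$ to be a $K[\tau]$-basis of $\Lie_M(L[\tau])$ constant in $\tau$, which is possible since $\Lie_M(L[\tau])=\Lie_M(L)[\tau]$, I get $\delta\in K^\times$. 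The result is $L^{(J)}(M;\tau)\in K_\infty\llangle\tau;\frac1d\rrangle$.
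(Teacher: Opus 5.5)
Your proposal is correct and follows the paper's proof essentially step by step. You take a $K[\tau]$-basis of $K\otimes_A U_M^{(J)}$ made of units, which are overconvergent by Theorem~\ref{thm:overconv}, expand it in a $K$-basis of $\Lie_M(L)$ to get a matrix with $1$-superconvergent entries, identify its determinant with $L^{(J)}(M;\tau)$ up to a factor in $K^\times$ via the class formula, and conclude with Lemma~\ref{lem:multoverconv}. Two small remarks: the detour through $\delta$ is unnecessary, since your $e_i$ already form a $K[\tau]$-basis of $\Lie_M(L[\tau]) = K\otimes_A \Lie_M(J[\tau])$ and so $\delta\in K^\times$ from the start; and the ``twisting'' issue you flag as the main obstacle is routine, because the right action of $\tau$ is just a shift of coefficients.
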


\begin{proof}
Let $u_1,\ldots,u_d\in U_M^{(J)}$ be units that form a basis of $K[\tau]\otimes_{A[\tau]}U_M^{(J)}$ over $K[\tau]$, and write 
\[
u_n = \sum_{i}{\ell_{n,i} \tau^i} \quad \text{for some} \quad \ell_{n,i}\in \Lie_M(L_{\infty}).
\]
Given a basis $(e_1,\ldots,e_d)$ of $\Lie_M(L)$ over $K$, each $\ell_{n,i}$ decomposes as $\sum_{j}{c_{n,i,j}e_j}$ for some coefficients $c_{n,i,j}\in K_{\infty}$. After base change to $K[\tau]$, the class formula (Theorem~\ref{thm:classformula}) expresses $L^{(J)}(M;\tau)$ as the determinant of the $d$-times-$d$ matrix $T=(\sum_{i}{c_{n,i,j} \tau^i})_{n,j}$ in $M_d(K_{\infty}\langle \tau \rangle)$, at least up to a factor in $K^{\times}$. From overconvergence of units (Theorem~\ref{thm:overconv}), there exists $r>1$ such that $\vert \ell_{n,i}\vert r^{q^i}$ tends to zero as $i$ goes to infinity. We deduce the same estimate for the coefficients $c_{n,i,j}$:
\[
\text{for all } n \text{ and } j, \quad \lim_{i \to \infty} \vert c_{n,i,j}\vert r^{q^i} =0. 
\]
In particular, each entry of $T$ is $1$-superconvergent and $\det(T)$ is $\frac{1}{d}$-superconvergent by Lemma~\ref{lem:multoverconv}.
\end{proof}

Corollary~\ref{cor:Lsuperconv} resonates strongly with
\cite[Theorem~2.2.6]{caruso-gazda} except that the rank is here
replaced by the dimension.
In order to conceal those two results, we propose the following
conjecture, perfectly in line with Taelman's conjecture~\cite{taelman-conj}.

\begin{conj}
We have
$L^{(J)}(M; \tau) \in K_\infty\llangle\tau; \frac 1 w\rrangle$ with
$w = \dim_K \Lie_M(L) / \mathfrak j{\cdot}\Lie_M(L)$ 
where $\mathfrak j$ is the
ideal of $A \otimes R$ generated by the $a \otimes 1 - 1 \otimes
a$, $a \in A$.
\end{conj}

\begin{rem}
The dimension $w$ is upper bounded by both
the dimension and the rank of $M$ (if abelian). Our conjecture is
then a strong version of both Corollary~\ref{cor:Lsuperconv} and
Theorem~2.2.6 of \cite{caruso-gazda}.
\end{rem}


{\footnotesize
\begin{thebibliography}{ABCD}

\bibitem[A1]{anderson}
G.~W. Anderson,
\emph{$t$-motives},
Duke Math. J. \textbf{53} (1986), no.~2, 457--502.
\href{https://doi.org/10.1215/S0012-7094-86-05328-7}
{doi:10.1215/S0012-7094-86-05328-7}.

\bibitem[A2]{anderson-L}
G.~W. Anderson,
\emph{An elementary approach to $L$-functions mod $p$},
J. Number Theory \textbf{80} (2000), no.~2, 291--303.
\href{https://doi.org/10.1006/jnth.1999.2452}
{doi:10.1006/jnth.1999.2452}.

\bibitem[AT]{AT}
B.~Anglès and F.~Tavares Ribeiro,
\emph{Arithmetic of function field units},
Math. Ann. \textbf{367} (2017), no.~1--2, 501--579.
\href{https://doi.org/10.1007/s00208-016-1405-2}
{doi:10.1007/s00208-016-1405-2}.

\bibitem[ANT]{ANT}
B.~Anglès, T.~Ngo Dac, and F.~Tavares Ribeiro,
\emph{A class formula for admissible Anderson modules},
Invent. Math. \textbf{229} (2022), no.~2, 563--606.
\href{https://doi.org/10.1007/s00222-022-01110-3}
{doi:10.1007/s00222-022-01110-3}.

\bibitem[FMM]{CFM}
M.~I. de Frutos-Fernández, D.~Macías Castillo, and
D.~Martínez Marqués,
\emph{The refined class number formula for Drinfeld modules},
Compos. Math. \textbf{162} (2026), no.~4, 852--903.
\href{https://doi.org/10.1017/S0010437X26103108}
{doi:10.1017/S0010437X26103108}.

\bibitem[CG]{caruso-gazda}
X.~Caruso and Q.~Gazda,
\emph{Computation of classical and $v$-adic $L$-series of
$t$-motives},
Res. Number Theory \textbf{11} (2025), article no.~35.
\href{https://doi.org/10.1007/s40993-024-00588-5}
{doi:10.1007/s40993-024-00588-5}.

\bibitem[Fa]{fang}
J.~Fang,
\emph{Special $L$-values of abelian $t$-modules},
J. Number Theory \textbf{147} (2015), 300--325.
\href{https://doi.org/10.1016/j.jnt.2014.07.012}
{doi:10.1016/j.jnt.2014.07.012}.

\bibitem[Fe]{ferraro}
G.~H. Ferraro,
\emph{A duality result about special functions for Drinfeld
modules of arbitrary rank},
Res. Math. Sci. \textbf{12} (2025), article no.~23.
\href{https://doi.org/10.1007/s40687-025-00506-w}
{doi:10.1007/s40687-025-00506-w}.

\bibitem[G]{gazda}
Q.~Gazda,
\emph{On the integral part of $A$-motivic cohomology},
Compos. Math. \textbf{160} (2024), no.~8, 1715--1783.
\href{https://doi.org/10.1112/S0010437X24007218}
{doi:10.1112/S0010437X24007218}.

\bibitem[GM]{gazda-maurischat}
Q.~Gazda and A.~Maurischat,
\emph{Pairing Anderson motives via formal residues in the
Frobenius endomorphism},
preprint, arXiv:2504.01926 [math.AG] (2025).
\href{https://doi.org/10.48550/arXiv.2504.01926}
{doi:10.48550/arXiv.2504.01926}.

\bibitem[Ha]{hartl}
U.~Hartl,
\emph{Isogenies of abelian Anderson $A$-modules and
$A$-motives},
Ann. Sc. Norm. Super. Pisa Cl. Sci. (5) \textbf{19} (2019),
no.~4, 1429--1470.
\href{https://doi.org/10.2422/2036-2145.201612_003}
{doi:10.2422/2036-2145.201612-003}.

\bibitem[La]{lam}
T.~Y. Lam,
\emph{Lectures on Modules and Rings},
Graduate Texts in Mathematics, vol.~189,
Springer-Verlag, New York, 1999.
\href{https://doi.org/10.1007/978-1-4612-0525-8}
{doi:10.1007/978-1-4612-0525-8}.

\bibitem[Qu]{quillen}
D.~Quillen,
\emph{Projective modules over polynomial rings},
Invent. Math. \textbf{36} (1976), no.~1, 167--171.
\href{https://doi.org/10.1007/BF01390008}
{doi:10.1007/BF01390008}.

\bibitem[Ta1]{taelman-conj}
L.~Taelman,
\emph{Special $L$-values of $t$-motives: a conjecture},
Int. Math. Res. Not. IMRN (2009), no.~16, 2957--2977.
\href{https://doi.org/10.1093/imrn/rnp038}
{doi:10.1093/imrn/rnp038}.

\bibitem[Ta2]{taelman}
L.~Taelman,
\emph{Special $L$-values of Drinfeld modules},
Ann. of Math. (2) \textbf{175} (2012), no.~1, 369--391.
\href{https://doi.org/10.4007/annals.2012.175.1.10}
{doi:10.4007/annals.2012.175.1.10}.

\end{thebibliography}
}
\end{document}